\newtheorem{theorem}{Theorem}[section]
\newtheorem{lemma}[theorem]{Lemma}
\newtheorem{prop}[theorem]{Proposition}
\newtheorem{cor}[theorem]{Corollary}
\theoremstyle{definition}
\newtheorem{definition}[theorem]{Definition}
\newtheorem{example}[theorem]{Example}
\newtheorem{conjecture}[theorem]{Conjecture}
\theoremstyle{remark}
\newtheorem{remark}[theorem]{\bf{Remark}}
\numberwithin{equation}{section}
\begin{document}

	\title[On directional preservation of orthogonality]{On directional preservation of orthogonality and its application to isometries}
	
		\author[Manna,  Mandal, Paul and Sain  ]{Jayanta Manna, Kalidas Mandal,  Kallol Paul and Debmalya Sain }

	\newcommand{\acr}{\newline\indent}

	\address[Manna]{Department of Mathematics\\ Jadavpur University\\ Kolkata 700032\\ West Bengal\\ INDIA}
	\email{iamjayantamanna1@gmail.com}
	
	\address[Mandal]{Department of Mathematics\\ Jadavpur University\\ Kolkata 700032\\ West Bengal\\ INDIA}
	\email{kalidas.mandal14@gmail.com}
	
	\address[Paul]{Department of Mathematics\\ Jadavpur University\\ Kolkata 700032\\ West Bengal\\ INDIA}
	\email{kalloldada@gmail.com}
	
		\address[Sain]{Department of Mathematics\\ Indian Institute of Information Technology, Raichur\\ Karnataka 584135 \\INDIA}
	\email{saindebmalya@gmail.com}
	\thanks{Jayanta Manna would like to thank UGC, Govt. of India for the financial support
		in the form of Junior Research Fellowship under the mentorship of Professor Kallol Paul. The research of Dr Kalidas Mandal and 
		Professor Kallol Paul is supported by CRG Project bearing File no. CRG/2023/00716 of DST-SERB, Govt.
		of India.}

	\subjclass[2010]{Primary 46B20,  Secondary 46B04}
	\keywords{Isometry; polyhedral  normed linear spaces; directional preservation of orthogonality; support functionals}
	
	\begin{abstract}
		We study the local preservation of Birkhoff-James orthogonality by linear operators between  normed linear spaces, at a point and in a particular direction. We obtain a complete characterization of the same, which allows us to present refinements of the local preservation of orthogonality explored earlier. We also study the directional preservation of orthogonality with respect to certain special subspaces of the domain space, and apply the results towards identifying the isometries on a polyhedral  normed linear space. In particular, we obtain refinements of the Blanco-Koldobsky-Turn\v{s}ek Theorem for polyhedral  normed linear spaces, including $ \ell_{\infty}^{n}, \ell_{1}^{n}. $

	\end{abstract}

	\maketitle

	\section{Introduction.}
	
	Identifying the isometries on a  normed linear space is one of the fundamental goals in functional analysis and operator theory. The classical Blanco-Koldobsky-Turn\v{s}ek Theorem \cite{BT06,K93} characterizes the isometries as the norm one linear operators preserving Birkhoff-James orthogonality. In light of this powerful result that deals with global preservation of orthogonality, it is interesting to consider local versions of the same. Such a program was recently initiated in \cite{SMP24}, where refinements of the Blanco-Koldobsky-Turn\v{s}ek Theorem were obtained in several particular cases, including the Euclidean spaces and the finite-dimensional sequence spaces $ \ell_{\infty}^{n}, \ell_{1}^{n}. $ The purpose of the present article is to conduct a finer study of the local preservation of orthogonality, leading to further refinements of some results obtained earlier in \cite{SMP24}.\\

	Let us now introduce the relevant terminologies and the notations to be used throughout this paper. We use the symbols  $ \mathbb{X}, \mathbb{Y}$  to denote  normed linear spaces.  We will be working with \textit{real} normed linear spaces, without mentioning it any further. The dual space of $\mathbb{X}$ is denoted by $\mathbb{X}^*.$ Let
	$B_{\mathbb{X}} = \{x\in \mathbb{X} : \|x\|\leq 1\}$ and $S_{\mathbb{X}} = \{x\in \mathbb{X} : \|x\|=1\}$ be the  unit ball and the unit sphere of $\mathbb{X}$, respectively. Let $\mathbb{L}(\mathbb{X}, \mathbb{Y})$ denote the space of all bounded linear operators from $ \mathbb{X} $ to $ \mathbb{Y}, $ endowed with the usual operator norm.  Whenever $ \mathbb{X} = \mathbb{Y}, $ we simply write $ \mathbb{L}(\mathbb{X}, \mathbb{Y}) = \mathbb{L}(\mathbb{X}). $ An operator $ T \in \mathbb{L}(\mathbb{X}, \mathbb{Y}) $ is said to be an isometry if $ \| Tx \| = \| x \|,$ for all $ x \in \mathbb{X}.$ The convex hull of a nonempty set $S\subset \mathbb{X}$ is denoted by $co(S).$ For a nonempty convex subset $A \subset \mathbb{X},$ $ x \in A$  is said to be an  extreme point of $A,$ if $x =(1-t)y +tz,$ for some $ t \in(0,1)$ and $y, z\in A$ imply that $x = y = z.$ The set of all extreme points of $A$ is denoted by $Ext~A.$ For a set $A\subset \mathbb{X},$ $A^c$ denotes the complement of $A,$ i.e., $A^c=\mathbb{X}\setminus A.$ For a non-zero element $x \in \mathbb{X},$ we denote the collection of all support functionals at $x$ as $J(x),$ i.e., $ J(x) = \{ f \in S_{\mathbb{X}^*} : f(x) = \| x \| \}. $   A non-zero element  $ x \in \mathbb{X} $ is said to be a $ k $-smooth point (or, that the order of smoothness of $ x $ is $k$)  if $dim\,span \,J(x)=k.$ Moreover, $ 1 $-smooth points are simply called smooth points. Thus, an element $ x \in \mathbb{X} $ is smooth if and only if there exists a unique support functional at $x.$ 
	For more information on smooth and $k$-smooth points in a  normed linear space and some of their applications, the readers are referred to \cite{DMP22,KS05,LR07,MPD22,MP20,PSG16,SPMR20,W18}. Given any two elements $x, y \in \mathbb{X}, $ we say that $x$ is Birkhoff-James orthogonal \cite{B35,J47}  to $y,$ written as $ x \perp_B y,$ if $ \| x + \lambda y \| \geq \|x\|$ for all scalars $\lambda. $  From James Characterization of Birkhoff-James orthogonality \cite[Cor. 2.2]{J47}, it follows that for any $x, y \in \mathbb{X}, $ $x\perp_B y$ if and only if there exists $f \in J(x)$ such that $f(y) = 0.$
	  The set of all such $y$ is denoted by $ x^{\perp_B} $, i.e., $x^{\perp_B} = \{ y \in \mathbb{X} : x \perp_B y \}. $ In \cite{S17}, the notion of Birkhoff-James orthogonality was dissected into two parts, namely, $x^+$ and $x^-$,  as follows: For  a given $x\in \mathbb{X},$ we say that $y \in x^+$ if $ \| x + \lambda y \| \geq \|x\|$ for all $\lambda\geq0$ and $y \in x^-$ if $ \| x + \lambda y \| \geq \|x\|,$ for all $\lambda\leq 0.$ For  various applications of Birkhoff-James orthogonality in studying the geometry of normed linear spaces, one may go through the recent book \cite{Book24}.\\
	
    We recall that  $  T \in \mathbb{L}(\mathbb{X}, \mathbb{Y}) $ is said to preserve  Birkhoff-James orthogonality if for  $ x, y \in \mathbb{X},$ $ x \perp_B y \Rightarrow Tx \perp_B Ty. $ Quite naturally, for the local version of this property, The operator $T$ is said to preserve Birkhoff-James orthogonality at $ x \in \mathbb{X} $ if for all $y \in \mathbb{X},$ $ x \perp_B y \Rightarrow Tx \perp_B Ty. $ Several recent studies \cite{S20,S18,SRT21} have established the importance of the  local preservation of Birkhoff-James orthogonality in understanding the geometric properties of the underlying  normed linear spaces. Recently in \cite{SMP24}, it was proved that  if a bounded linear operator $ T \in \mathbb{L}(\mathbb{X}, \mathbb{Y}) $ preserves Birkhoff-James orthogonality  at a $k$-smooth point $ x \in \mathbb{X} $ then $ Tx $ must be $(k+r)$-smooth in $ \mathbb{Y}, $ for some $ r \geq 0. $ One of our main aims is to improve this result. For this purpose, let us introduce the following two definitions.
    
		\begin{definition}\label{local-y}
		Let $ \mathbb{X}, \mathbb{Y} $ be  normed linear spaces and let $ x \in \mathbb{X}$ be non-zero. An operator $T\in\mathbb{L}(\mathbb{X}, \mathbb{Y})$ is said to preserve Birkhoff-James orthogonality at $x$ in the direction $y\in x^{\perp_B}$ if $Tx\perp_B Ty.$
	\end{definition}
	
	\begin{definition}\label{local-Z}
		Let $ \mathbb{X}, \mathbb{Y} $ be  normed linear spaces and let $ x\in\mathbb{X} $ be non-zero. Let $ \mathbb{Z} $ be a subspace of $ \mathbb{X} $ such that $\mathbb{Z} \subset x^{\perp_B}.$ An operator $T\in\mathbb{L}(\mathbb{X},\mathbb{Y})$ is said to preserve Birkhoff-James orthogonality at $x$ with respect to $\mathbb{Z}$ if $Tx\perp_B Tz,$ for all $z\in\mathbb{Z}.$ 
	\end{definition}
	
	It is worth noting that the local preservation of orthogonality at an element  in a particular direction, as mentioned in Definition \ref{local-y}, is not always guaranteed. As for example, consider $T \in \mathbb{L}(\ell_\infty^2) $ given by
	\[ T(1,1)=(0,1),~~T(-1,1)=(-1,0).\]
	Then for the element $ x = ( 2,1), $ there exists no $ y \in \ell_\infty^2 $ such that $ x \perp_B y $ and $ Tx \perp_B Ty. $  On the other hand, for $ x = (1,1) \in \ell_\infty^2, $ there exists $ y = (-1,1) \in \ell_\infty^2 $ such that $ x \perp_B y $ and $ Tx \perp_B Ty.$  However, in Proposition \ref{prop} of the present article, we will see that such an example is essentially a two-dimensional phenomenon.\\

	Clearly, Definition \ref{local-Z} generalizes the local preservation of orthogonality at a point $ x $ only when $ x $ is \textit{not} a smooth point. In that case, there exists a family of hyperspaces $H_{\alpha} $  such that $ x \perp_B H_\alpha $ and it may so happen  that $ x \perp_B H_{\alpha_0} \Rightarrow Tx \perp_B  T(H_{\alpha_0} ) $ for some fixed hyperspace $H_{\alpha_0} $ but the same is not true for any hyperspace $H_\alpha$ with $ \alpha \neq \alpha_0.$  To see this, it suffices to consider the same operator $ T \in \mathbb{L}(\ell_\infty^2), $ as given above, and to observe that $ (1,1) \perp_B (-1,1) $ and $ T(1,1) = (0,1) \perp_B (-1,0) = T(-1,1), $ whereas  $ (1,1) \perp_B (0,1) $ but $T(1,1)=(0,1) \not\perp_B (-\frac{1}{2},\frac{1}{2}) = T(0,1)$.\\ 
	
	 In this paper, we mainly study the local preservation of  Birkhoff-James orthogonality  at a point $x$ with respect to some subspace contained in $x^{\perp_B}.$ First we obtain a complete  characterization of the local preservation of  Birkhoff-James orthogonality  at a point. Next, we characterize the local preservation of  Birkhoff-James orthogonality  at a point in a particular  direction. Then we study  local preservation of Birkhoff-James orthogonality   with respect to the kernels of the support functionals. We also explore the connection between the order of smoothness of a point  and the directional preservation of Birkhoff-James orthogonality with respect to the kernels of  some support functionals. As an application of the present study, we obtain a refinement of the Blanco-Koldobsky-Turn\v{s}ek Theorem on some polyhedral  normed linear spaces.\\

	We end this section with the following results which find extensive use throughout this article.
	
	\begin{theorem}\cite[Th. 1.1, pp 170]{S70}\label{theoSinger}
		Let $ \mathbb{X} $ be a  normed linear space. Then for $ x,y\in\mathbb{X},$ $x\perp_By$ if and only if there exist $\phi_1,\phi_2\in Ext~J(x)$ and $\alpha\in[0,1]$ such that $(1-\alpha)\phi_1(y)+\alpha\phi_2(y)=0.$ 
	\end{theorem}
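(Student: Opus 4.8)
The plan is to derive the statement from the James characterization of Birkhoff-James orthogonality recalled earlier in the excerpt, combined with the extremal structure of the support set $J(x)$ (taking $x\neq 0$, so that $J(x)$ is defined). The key structural facts are that $J(x)$ is a weak*-closed subset of $B_{\mathbb{X}^*}$, hence weak*-compact by the Banach--Alaoglu theorem, and that it is convex. Since the evaluation map $\Phi\mapsto\Phi(y)$ is a weak*-continuous linear functional on the compact convex set $J(x)$, the Bauer maximum principle guarantees that it attains both its maximum $M$ and its minimum $m$ at \emph{extreme} points of $J(x)$; I would fix $\phi_1,\phi_2\in Ext~J(x)$ with $M=\phi_1(y)$ and $m=\phi_2(y)$.

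For the forward implication, assume $x\perp_B y$. By the James characterization there exists $f\in J(x)$ with $f(y)=0$, so $0$ is a value of $\Phi\mapsto\Phi(y)$ on $J(x)$, forcing $m\leq 0\leq M$. I would then select $\alpha\in[0,1]$ making the convex combination $(1-\alpha)M+\alpha m$ vanish: take $\alpha=0$ if $M=0$, take $\alpha=1$ if $m=0$, and otherwise set $\alpha=\frac{M}{M-m}\in(0,1)$, which is well defined because $M>0$ and $M-m>0$ in that case. This produces $(1-\alpha)\phi_1(y)+\alpha\phi_2(y)=0$, exactly as required.

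The converse is the routine direction. Given $\phi_1,\phi_2\in Ext~J(x)$ and $\alpha\in[0,1]$ with $(1-\alpha)\phi_1(y)+\alpha\phi_2(y)=0$, I would set $g=(1-\alpha)\phi_1+\alpha\phi_2$. Convexity of $J(x)$ gives $g\in J(x)$ immediately; alternatively one checks directly that $g(x)=(1-\alpha)\|x\|+\alpha\|x\|=\|x\|$ together with $\|g\|\leq 1$, so that $\|g\|=1$ and $g\in J(x)$. Since $g(y)=0$, the James characterization yields $x\perp_B y$, completing the argument.

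The only substantive point is the appeal to the Bauer maximum principle in the forward direction, which ensures that the extremal values of $\Phi\mapsto\Phi(y)$ are genuinely attained at extreme points of $J(x)$ rather than at arbitrary boundary points; the selection of $\alpha$ and the entire converse are elementary. I expect this extremal-attainment step to be the main obstacle to keep rigorous, especially if one wishes to avoid any finite-dimensionality assumption, since it is precisely there that the weak*-compactness of $J(x)$ and the Krein--Milman-type structure enter.
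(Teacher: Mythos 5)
Your proof is correct. Note that the paper itself offers no proof of this statement; it is quoted verbatim from Singer's monograph, so there is no internal argument to compare against, and your job here was genuinely to supply a proof from scratch. Your argument is the standard one and all the key steps check out: for $x\neq 0$ the set $J(x)=\{f\in B_{\mathbb{X}^*}: f(x)=\|x\|\}$ is a nonempty (Hahn--Banach), convex, weak*-closed subset of the weak*-compact ball, hence weak*-compact; the weak*-continuous linear functional $\Phi\mapsto\Phi(y)$ therefore attains its maximum $M$ and minimum $m$ on $J(x)$, and the set of maximizers (resp.\ minimizers) is a nonempty weak*-compact face of $J(x)$, which by Krein--Milman contains an extreme point of $J(x)$ --- this is exactly the Bauer-type extremal attainment you flag as the crux, and it needs no finite-dimensionality. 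The James characterization then places $0$ in $[m,M]$, your case analysis on $\alpha$ is exhaustive and the formula $\alpha=M/(M-m)$ is well defined when $M>0>m$. The converse is handled correctly: the convex combination $g=(1-\alpha)\phi_1+\alpha\phi_2$ satisfies $g(x)=\|x\|$ and $\|g\|\leq 1$, hence $\|g\|=1$ and $g\in J(x)$, and $g(y)=0$ gives $x\perp_B y$ by James. Your parenthetical restriction to $x\neq 0$ is the right reading of the statement, since $J(x)$ is only defined for non-zero $x$ in this paper (and $0\perp_B y$ holds vacuously anyway).
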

	
	\begin{theorem}\cite[Th. 2.4]{SMP20}\label{prop+ func}
		Let $ \mathbb{X} $ be a  normed linear space and let $x,y\in \mathbb{X}.$ Then the following are
		true:
		\begin{itemize}
			\item[(i)] $y\in x^+$ if and only if there exists an $f\in J(x)$ such that $f(y)\geq0.$
			\item[(ii)] $y\in x^-$ if and only if there exists an $f\in J(x)$ such that $f(y)\leq0.$
		\end{itemize}
	\end{theorem}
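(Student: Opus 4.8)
The plan is to establish part~(i) and then deduce part~(ii) from it by a sign change. Indeed, $y\in x^-$ means $\|x+\lambda y\|\ge\|x\|$ for every $\lambda\le 0$, which upon writing $\lambda=-\mu$ is exactly the statement $-y\in x^+$. Granting~(i), there is $f\in J(x)$ with $f(-y)\ge 0$, i.e. $f(y)\le 0$, and the argument reverses; so (ii) is immediate once (i) is proved, and I would present only the proof of~(i).

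For the sufficiency in~(i), suppose $f\in J(x)$ satisfies $f(y)\ge 0$. Then for each $\lambda\ge 0$,
\[ \|x+\lambda y\|\ge f(x+\lambda y)=f(x)+\lambda f(y)=\|x\|+\lambda f(y)\ge\|x\|, \]
since $\|f\|=1$, $f(x)=\|x\|$, and $\lambda f(y)\ge 0$. Hence $y\in x^+$; this direction is routine.

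The real content is the necessity. Assume $y\in x^+$, so $\|x+\lambda y\|\ge\|x\|$ for all $\lambda\ge 0$, and I must produce a single $f\in J(x)$ with $f(y)\ge 0$. For each $\lambda>0$ pick, by Hahn--Banach, a norming functional $f_\lambda\in S_{\mathbb{X}^*}$ with $f_\lambda(x+\lambda y)=\|x+\lambda y\|$. Because $f_\lambda(x)\le\|x\|$, we get
\[ \lambda f_\lambda(y)=\|x+\lambda y\|-f_\lambda(x)\ge\|x\|-\|x\|=0, \]
so $f_\lambda(y)\ge 0$ for every $\lambda>0$. I would then let $\lambda\to 0^+$ and pass to a limit: by the Banach--Alaoglu theorem the net $(f_\lambda)$ has a weak*-cluster point $f\in B_{\mathbb{X}^*}$, and along a weak*-convergent subnet with $\lambda\to 0$ one has $f_\lambda(y)\to f(y)$ and $f_\lambda(x)\to f(x)$. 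Evaluating at $y$ keeps the inequality, giving $f(y)\ge 0$; and from $f_\lambda(x)=\|x+\lambda y\|-\lambda f_\lambda(y)$ together with $\|x+\lambda y\|\to\|x\|$ and $|\lambda f_\lambda(y)|\le\lambda\|y\|\to 0$ one obtains $f(x)=\|x\|$. Since $\|f\|\le 1$ and $f(x)=\|x\|$ force $\|f\|=1$, we conclude $f\in J(x)$ with $f(y)\ge 0$.

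The hard part will be this limiting step: one must work along a single subnet on which both $f_\lambda(x)$ and $f_\lambda(y)$ converge and confirm that the cross term $\lambda f_\lambda(y)$ vanishes, so that the cluster point actually norms $x$. A cleaner alternative that avoids explicit nets is to use the convexity of $\lambda\mapsto\|x+\lambda y\|$: its right-hand derivative at $0$ exists and equals $\max\{f(y):f\in J(x)\}$, the maximum being attained on the weak*-compact set $J(x)$. The hypothesis $y\in x^+$ forces this one-sided derivative to be nonnegative, and the maximizer is the desired functional. Either route reduces the theorem to a standard compactness-and-convexity argument.
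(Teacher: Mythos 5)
The paper does not prove this statement at all: it is quoted verbatim as Theorem 1.3 from \cite[Th.\ 2.4]{SMP20} and used as a black box, so there is no in-paper proof to compare against. Your argument is correct and is essentially the standard proof of this fact (and the same in spirit as the one in the cited source): the sufficiency is the one-line estimate you give, and the necessity follows by norming $x+\lambda y$, observing $f_\lambda(y)\ge 0$, and extracting a weak*-cluster point as $\lambda\to 0^+$, which norms $x$ because $\|x+\lambda y\|\to\|x\|$ and the cross term is $O(\lambda)$; the reduction of (ii) to (i) via $y\mapsto -y$ is also fine. The alternative you sketch via the right-hand derivative $\lim_{\lambda\to 0^+}(\|x+\lambda y\|-\|x\|)/\lambda=\max\{f(y):f\in J(x)\}$ is equally valid and arguably cleaner, since the convexity of $\lambda\mapsto\|x+\lambda y\|$ makes the equivalence with $y\in x^+$ immediate.
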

	
	\section{Main Results}
	
	We begin with the following simple observation that ensures the existence of subspaces on which an operator preserves Birkhoff-James orthogonality.

		\begin{prop}\label{prop}
		Let $\mathbb{X}$  and $\mathbb{Y} $ be   normed linear spaces and let $dim~\mathbb{X}> 2.$ Suppose that $x \in \mathbb{X}$ is non-zero. Then for any operator $T\in\mathbb{L}(\mathbb{X}, \mathbb{Y}),$ there exists a subspace $\mathbb{V}\subset x^{\perp_B}$ of codimension  $2$ in $ \mathbb{X} $ such that $T$ preserves Birkhoff-James orthogonality at $x$ with respect to  $\mathbb{V}.$
	\end{prop}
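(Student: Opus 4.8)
The plan is to construct $\mathbb{V}$ explicitly as the intersection of the kernels of two functionals: one governing the orthogonality $x \perp_B v$ in the domain, and the other governing $Tx \perp_B Tv$ in the range. Since $x \neq 0$, the Hahn-Banach theorem supplies some $f \in J(x)$, and by the James characterization recalled in the introduction, $f(v)=0$ forces $x \perp_B v$. Hence the hyperspace $\ker f$ is contained in $x^{\perp_B}$, which secures the first required property on every subspace of $\ker f$.

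To control the image side, I would first dispose of the degenerate case $Tx = 0$: here $Tx \perp_B Tv$ holds trivially for every $v$, so any codimension-$2$ subspace of $\ker f$ works. Assuming $Tx \neq 0$, choose $g \in J(Tx)$ and form the bounded functional $h := g \circ T \in \mathbb{X}^*$. For any $v \in \ker h$ one has $g(Tv) = 0$ with $g \in J(Tx)$, whence $Tx \perp_B Tv$ again by James. Consequently, on the subspace $\mathbb{V}_0 := \ker f \cap \ker h$ both orthogonality relations hold simultaneously.

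It remains to arrange that the codimension equals exactly $2$. As the intersection of the kernels of $f$ and $h$, the subspace $\mathbb{V}_0$ has codimension $1$ or $2$ in $\mathbb{X}$. If it is $2$, we simply set $\mathbb{V} = \mathbb{V}_0$. If it is $1$, then necessarily $\ker f \subseteq \ker h$, so that $Tx \perp_B Tv$ holds for \emph{every} $v \in \ker f$; the hypothesis $\dim \mathbb{X} > 2$ then guarantees that $\ker f$ admits a subspace of codimension $1$ inside it, that is, of codimension $2$ in $\mathbb{X}$, and any such subspace serves as $\mathbb{V}$ with both properties intact.

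I expect the only genuine obstacle to be this final codimension bookkeeping: the natural construction guarantees codimension at most $2$, and one must verify that whenever it drops to $1$ the stronger conclusion $Tx \perp_B T(\ker f)$ indeed holds, so that a harmless further restriction restores codimension $2$. The dimension hypothesis enters precisely here, ensuring that the relevant codimension-$2$ subspaces exist (and are nontrivial).
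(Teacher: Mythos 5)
Your proof is correct. It follows the same overall strategy as the paper --- fix $f\in J(x)$ and $g\in J(Tx)$ and locate a codimension-$1$ subspace of $\ker f$ that $T$ maps into $\ker g$ --- but your execution is genuinely cleaner: by pulling $g$ back to $h:=g\circ T\in\mathbb{X}^*$ you identify the desired subspace outright as $\ker f\cap\ker h$, whose codimension in $\mathbb{X}$ is immediately seen to be $1$ or $2$ (note $h\neq 0$ since $h(x)=\|Tx\|>0$, though your fallback case covers $h=0$ anyway). The paper instead reaches the same set $\{v\in\ker f: Tv\in\ker g\}$ in two steps, first producing a nonzero subspace of $\ker f$ mapped into $\ker g$ by a linear-combination argument and then upgrading it to codimension $1$ in $\ker f$ via a maximality argument; your route avoids that bookkeeping entirely (and, in infinite dimensions, the implicit appeal to Zorn's lemma for a maximal such subspace). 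Your handling of the degenerate cases is also sound: when $Tx=0$ every direction works, and when $\ker f\cap\ker h$ has codimension $1$ one has $\ker f\subseteq\ker h$, so any hyperplane of $\ker f$ (which exists since $\dim\mathbb{X}>2$) serves as $\mathbb{V}$.
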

	\begin{proof}
	 If $Tx=0$ then the result holds trivially. Suppose $Tx\neq0.$ Let $f\in J(x)$ and $g\in J(Tx).$ Clearly, $\mathbb{Y}=\ker g \oplus span\{Tx\}.$  First we show that there exists $\mathbb{V}(\neq\{0\})\subset \ker f $ such that $T(\mathbb{V})\subset \ker g.$ Suppose on the contrary that for each $z(\neq 0)\in \ker f,$ $Tz \notin  \ker g.$ Since  $dim~\mathbb{X}> 2,$ there exist two linearly independent elements $u_1,u_2\in \ker f$ such that 
		\[Tu_1=h_1+\alpha_1Tx\text{ and } Tu_2=h_2+\alpha_2Tx,\text{ for some } h_1,h_2\in \ker g\text{ and }\alpha_1,\alpha_2\in \mathbb{R}\setminus\{0\}.\]
		This implies that $\alpha_2 u_1-\alpha_1 u_2(\neq 0)\in \ker f$ and $T(\alpha_2 u_1-\alpha_1 u_2)=\alpha_2h_1-\alpha_1h_2\in \ker g,$ a contradiction. Thus, there exists a subspace $\mathbb{V}(\neq\{0\})\subset \ker f $ such that $T(\mathbb{V})\subset \ker g.$\\
		Next, we claim that there exists a subspace $\mathbb{V}\subset \ker f$  of codimension 1 in $\ker f$ such that $T(\mathbb{V})\subset \ker g.$ Contrary to our claim suppose that $\mathbb{V}$ is a maximal (with respect to set inclusion) subspace of $\ker f$ such that $T(\mathbb{V})\subset \ker g,$ where $codim~\mathbb{V}\geq 2$ in $\ker f.$ Then there exist two linearly independent elements $v_1,v_2\in \mathbb \ker f \setminus \mathbb{V}$ such that $span\{v_1,v_2\}\cap \mathbb{V} =\{0\}$ and $Tv_1,Tv_2\notin \ker g.$ Then	
		\[Tv_1=k_1+\beta_1Tx\text{ and } Tv_2=k_2+\beta_2Tx,\text{ for some } k_1,k_2\in \ker g\text{ and }\beta_1,\beta_2\in \mathbb{R}\setminus\{0\}.\] 
		This implies that $\beta_2 v_1-\beta_1 v_2(\neq 0)\in \ker f\setminus\mathbb{V}$ and $T(\beta_2 v_1-\beta_1 v_2)=\beta_2k_1-\beta_1k_2\in \ker g.$ Let $\mathbb{V}_1=\mathbb{V}\oplus span\{\beta_2 v_1-\beta_1 v_2\}.$ Then clearly, $\mathbb{V}_1\subset \ker f $ and $T(\mathbb{V}_1)\subset \ker g,$ which  contradicts  the maximality of $\mathbb{V}.$ This establishes our claim that there exists a subspace $\mathbb{V}\subset \ker f$  of codimension 1 in $\ker f$ such that $T(\mathbb{V})\subset \ker g. $ Therefore, there exists a subspace $\mathbb{V}\subset x^{\perp_B}$ of codimension  $2$ in $ \mathbb{X} $ such that $T$ preserves Birkhoff-James orthogonality at $x$ with respect to  $\mathbb{V}.$ This completes the proof.

	\end{proof}
	
	Our next goal is to obtain a complete characterization of the local preservation of Birkhoff-James orthogonality. To this end, we introduce the notion of associated cones of a non-zero element $x\in \mathbb{X},$ corresponding to the extreme support  functionals at $ x. $  We recall that a subset $K$ of $\mathbb{X}$ is said to be a normal cone if 
		\[\text{(i)}~K+K\subset K,~\text{(ii)}~\alpha K\subset K, \text{ for all } \alpha \geq 0,~\text{(iii)}~K\cap(-K)=\{0\}.\]
		
\begin{definition}
	Let $\mathbb{X}$  be a  normed linear space and let $x \in \mathbb{X}$ be non-zero. Suppose that $Ext~J(x)=\{f_i:i\in \Lambda\},$ where $\Lambda$ is an index set. For $i,j\in\Lambda, $  the set $V_{ij}=\{y\in\mathbb{X}:f_i(y)\geq0\text{ and }f_j(y)\leq 0\}$ is said to be the associated cone of $x$ corresponding to $f_i\text{ and } f_j.$ We denote the set of all associated cones of $x$ by $\mathcal{V}(x).$
\end{definition}

We state some obvious but useful properties of the set of associated cones, which will be used later on in this article. The proofs are omitted as they can be completed using elementary arguments.

\begin{prop}
Let $\mathbb{X}$ be a  normed linear space and let $x\in \mathbb{X}$ be non-zero. Then the following results hold true:
	\begin{itemize}
		\item[(i)]  Associated cones of $x$ are closed and convex.
		\item[(ii)] Let $f_i,f_j\in Ext~J(x)$ for $i,j\in\Lambda.$ Then $V_{ij}=-V_{ji}.$
		\item[(iii)]  $x$ is smooth if and only if the set  $\mathcal{V}(x)$ is singleton. 
		\item[(iv)] If $x$ is a $k$-smooth point of $\mathbb{X}$, where $k\geq 2,$ then the set $\mathcal{V}(x) $ contains at least $k^2$ elements,  
		\item[(v)] Let $\mathbb{X}=\ell_{\infty}^n$ and $k\geq 2.$ Then $x$ is a $k$-smooth point of $\mathbb{X}$ if and only if there exist exactly $k^2$ elements in  $\mathcal{V}(x).$
		\item[(vi)] Let $\mathbb{X}=\ell_{1}^n$ and $k\geq 2.$ Then $x$ is a $k$-smooth point of $\mathbb{X}$ if and only if there exist exactly $4^{k-1}$ elements in  $\mathcal{V}(x).$
	\end{itemize}
\end{prop}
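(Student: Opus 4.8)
The plan is to split the six assertions into two groups: the structural facts (i)--(iii), which fall out of the definitions, and the enumerative facts (iv)--(vi), which all rest on a single distinctness principle for associated cones. For (i), I would note that $V_{ij}=\{y:f_i(y)\ge 0\}\cap\{y:f_j(y)\le 0\}$ is an intersection of two closed half-spaces, hence closed and convex. For (ii), the identity $V_{ij}=-V_{ji}$ is a one-line check: $z\in -V_{ji}$ iff $-z\in V_{ji}$ iff $f_j(-z)\ge 0$ and $f_i(-z)\le 0$ iff $f_i(z)\ge 0$ and $f_j(z)\le 0$ iff $z\in V_{ij}$. For (iii), if $x$ is smooth then $J(x)=Ext\,J(x)=\{f\}$ and the only cone is $V_{11}=\ker f$, so $\mathcal V(x)$ is a singleton; conversely, if $x$ is non-smooth then $J(x)$ is not a singleton, so by Krein--Milman $Ext\,J(x)$ contains two distinct functionals $f_i\ne f_j$, and since two distinct norm-one functionals both peaking at $x$ cannot be proportional, their kernels differ, yielding two distinct diagonal cones $V_{ii}=\ker f_i\ne \ker f_j=V_{jj}$.

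The heart of the matter is a recovery principle underlying (iv)--(vi): whenever $f_i,f_j$ are linearly independent, the ordered pair $(i,j)$ is determined by the cone $V_{ij}$ alone. I would establish this via polar cones. For linearly independent functionals, $V_{ij}^{\circ}=\operatorname{cone}\{-f_i,f_j\}$ is a two-dimensional wedge whose extreme rays are $\mathbb{R}_{\ge 0}(-f_i)$ and $\mathbb{R}_{\ge 0}(f_j)$; reading off these rays recovers the unordered pair, and since within the fixed family of extreme functionals under consideration a positive multiple of some $f_\ell$ can never equal a positive multiple of some $-f_m$, the two rays are of distinguishable types and the ordered pair is recovered (equivalently, one passes to the two-dimensional quotient $\mathbb{X}/(\ker f_i\cap\ker f_j)$, in which $V_{ij}$ is a genuine wedge with orientable bounding rays). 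Moreover the diagonal cones $V_{ii}=\ker f_i$ are hyperplanes, so they are automatically separated from the wedge-shaped off-diagonal cones and, being distinct kernels, from one another.

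Granting this principle, (iv)--(vi) reduce to counting. For (iv), since $\dim\operatorname{span}J(x)=k$ I can select $k$ linearly independent functionals from $Ext\,J(x)$; the principle forces the $k^2$ cones $V_{ij}$ with $1\le i,j\le k$ to be pairwise distinct, so $|\mathcal V(x)|\ge k^2$. For (v), a $k$-smooth point of $\ell_\infty^n$ has exactly $k$ coordinates of maximal modulus, so $Ext\,J(x)$ is precisely the set of $k$ linearly independent functionals $\operatorname{sgn}(x_i)e_i^{*}$; there are only $k^2$ ordered pairs and the principle makes them yield $k^2$ distinct cones, giving $|\mathcal V(x)|=k^2$. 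For (vi), I would first verify that a point of $\ell_1^n$ is $k$-smooth exactly when it has $k-1$ vanishing coordinates, in which case $Ext\,J(x)=\{f^{(0)}+\sum_{i\in Z}\varepsilon_i e_i^{*}:\varepsilon_i=\pm 1\}$ has $2^{k-1}$ elements, all pairwise non-proportional because they share the forced sign pattern $\operatorname{sgn}(x_\ell)$ on the support of $x$; the principle then applies to every off-diagonal pair, so the $4^{k-1}$ ordered pairs produce $4^{k-1}$ distinct cones. The converses in (v) and (vi) follow by applying the forward counts to the genuine order of smoothness $m$ and solving $m^2=k^2$, respectively $4^{m-1}=4^{k-1}$.

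The step I expect to be the genuine obstacle is the distinctness claim in (vi). In (iv) and (v) distinctness is visible from a single linearly independent family, but the $2^{k-1}$ extreme functionals of an $\ell_1^n$-point are highly dependent, spanning only a $k$-dimensional space, so no global independence argument is available. What rescues the count is that the recovery principle requires merely pairwise non-proportionality of the extreme functionals together with the sign rigidity on $\operatorname{supp}(x)$; this is precisely the ingredient that orients the two polar rays and thereby separates $V_{ij}$ from $V_{ji}$ and from every other cone. The smoothness-order bookkeeping itself (counting maximal coordinates in $\ell_\infty^n$ and vanishing coordinates in $\ell_1^n$, and the resulting cardinalities of $Ext\,J(x)$) is routine, but it must be carried out carefully, as the entire enumeration hinges on it.
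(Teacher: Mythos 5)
Your proof is correct. Note that the paper itself omits the proof of this proposition entirely (it is dismissed as ``elementary arguments''), so there is no authorial argument to compare against; your write-up is a legitimate filling-in of the omitted details. The one step that genuinely needs care --- showing that distinct ordered pairs of extreme functionals give distinct cones, which is what makes the counts in (iv)--(vi) exact rather than mere upper bounds --- is handled soundly by your dual-cone recovery principle: for linearly independent $f_i,f_j$ the dual cone of $V_{ij}$ is $\{af_i-bf_j: a,b\ge 0\}$, its two extreme rays are distinguishable because every extreme support functional satisfies $f(x)=\|x\|>0$ while any negative multiple of one is negative at $x$, and the diagonal cones $V_{ii}=\ker f_i$ are subspaces (hence separated from the wedges) with pairwise distinct kernels. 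The remaining ingredients --- pairwise non-proportionality of distinct norm-one functionals peaking at $x$, the identification of $Ext~J(x)$ with the signed coordinate functionals on the maximal-modulus coordinates in $\ell_\infty^n$ and with the $2^{k-1}$ sign-completions on the zero coordinates in $\ell_1^n$ --- are all verified or verifiable exactly as you indicate.
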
 

We also require the following lemma for our purpose.

\begin{lemma}\label{Lem-asso-cone}
	Let $\mathbb{X}$  be a  normed linear space and let $x \in \mathbb{X}$ be non-zero. Then $x^{\perp_B}=\bigcup\limits_{V\in \mathcal{V}(x)}V.$ 
\end{lemma}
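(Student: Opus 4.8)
The plan is to establish the set equality by proving the two inclusions separately, using Theorem \ref{theoSinger} (Singer's characterization of Birkhoff-James orthogonality) as the main engine in each direction.

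For the inclusion $\bigcup_{V\in\mathcal{V}(x)}V \subseteq x^{\perp_B}$, I would fix an arbitrary associated cone $V_{ij}$ and take $y \in V_{ij}$, so that $f_i(y)\geq 0$ and $f_j(y)\leq 0$ for the extreme support functionals $f_i,f_j \in Ext~J(x)$. The key observation is that the map $\alpha \mapsto (1-\alpha)f_i(y)+\alpha f_j(y)$ is affine, hence continuous, on $[0,1]$; it equals $f_i(y)\geq 0$ at $\alpha=0$ and $f_j(y)\leq 0$ at $\alpha=1$. By the intermediate value theorem there is some $\alpha_0\in[0,1]$ for which $(1-\alpha_0)f_i(y)+\alpha_0 f_j(y)=0$, and Theorem \ref{theoSinger} then immediately yields $x\perp_B y$, i.e., $y\in x^{\perp_B}$.

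For the reverse inclusion $x^{\perp_B}\subseteq \bigcup_{V\in\mathcal{V}(x)}V$, I would start from $y\in x^{\perp_B}$ and invoke Theorem \ref{theoSinger} directly: there exist $\phi_1,\phi_2\in Ext~J(x)$ and $\alpha\in[0,1]$ with $(1-\alpha)\phi_1(y)+\alpha\phi_2(y)=0$. Rewriting this as $(1-\alpha)\phi_1(y)=-\alpha\phi_2(y)$ and using $\alpha,1-\alpha\geq 0$, a short sign analysis shows that $\phi_1(y)$ and $\phi_2(y)$ must have weakly opposite signs: if $\alpha\in(0,1)$ then both coefficients are strictly positive and the two values cannot share a strict sign, while the boundary cases $\alpha=0$ and $\alpha=1$ force $\phi_1(y)=0$ and $\phi_2(y)=0$, respectively. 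Consequently $y$ lies in an associated cone $V_{ij}$ obtained by letting $f_i,f_j$ be $\phi_1,\phi_2$ in the appropriate order (with a degenerate cone $V_{ii}$ covering the case where one of the values vanishes), and hence $y\in\bigcup_{V\in\mathcal{V}(x)}V$.

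There is no substantial obstacle here; the argument is essentially a repackaging of Singer's theorem. The only point demanding a little care is the sign bookkeeping in the reverse inclusion, where one must confirm that every configuration compatible with $(1-\alpha)\phi_1(y)+\alpha\phi_2(y)=0$ places $y$ in one of the cones $V_{ij}$ as defined; here the property $V_{ji}=-V_{ij}$ noted earlier makes the choice of which functional plays the role of $f_i$ versus $f_j$ harmless. Assembling the two inclusions gives $x^{\perp_B}=\bigcup_{V\in\mathcal{V}(x)}V$.
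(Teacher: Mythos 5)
Your proposal is correct and follows essentially the same route as the paper: both directions are handled by Singer's characterization (Theorem \ref{theoSinger}), with the intermediate value/sign argument showing that $f_i(y)\geq 0$ and $f_j(y)\leq 0$ is equivalent to the existence of a vanishing convex combination. The sign bookkeeping you flag in the reverse inclusion is exactly what the paper does via the observation $\phi_1(z)\phi_2(z)\leq 0$, so there is no gap.
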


\begin{proof}
	Let $y\in \bigcup\limits_{V\in \mathcal{V}(x)}V .$ Then $y\in V$ for some  $V\in\mathcal{V}(x).$ Suppose $V$ is the associated cone of $x$ corresponding to $f_1\text{ and } f_2,$ for some $f_1,f_2\in Ext~(J(x)).$ Hence $f_1(y)\geq0\text{ and }f_2(y)\leq 0$ and so there exists $\alpha \in [0,1] $ such that $(1-\alpha)f_1(y)+\alpha f_2(y)=0.$ This implies $x\perp_By$ and so $y\in x^{\perp_B}.$ Hence$\bigcup\limits_{V\in \mathcal{V}(x)}V\subset  x^{\perp_B}.$\\
	Next, let $z\in  x^{\perp_B}.$ From Theorem \ref{theoSinger}, it follows that there exist $\phi_1,\phi_2\in Ext~J(x)$ such that $(1-\alpha)\phi_1(z)+\alpha\phi_2(z)=0,$ for some $\alpha \in [0,1].$ Hence $\phi_1(z)\phi_2(z)\leq 0$ and so $z$ belongs to the associated cone corresponding to $\phi_1\text{ and } \phi_2$ $(\text{ or, }\phi_2\text{ and } \phi_1).$ Thus, $z\in\bigcup\limits_{V\in \mathcal{V}(x)}V.$ Hence $x^{\perp_B}\subset\bigcup\limits_{V\in \mathcal{V}(x)}V.$  Therefore, $x^{\perp_B}=\bigcup\limits_{V\in \mathcal{V}(x)}V.$ This completes the proof of the lemma.
\end{proof}

We are now ready to present a complete characterization of the local preservation of Birkhoff-James orthogonality.

\begin{theorem}\label{theocharBJ}
	Let $\mathbb{X}$ and $\mathbb{Y}$ be  normed linear spaces and let $x \in\mathbb{X}$ be non-zero. Then $T\in\mathbb{L}(\mathbb{X},\mathbb{Y})$  preserves Birkhoff-James orthogonality at $x$ if and only if for each associated cone $V\in\mathcal{V}(x),$ there exists a convex set $S\subset (Tx)^{\perp_B}$ such that $T(V)\subset S.$ 
\end{theorem}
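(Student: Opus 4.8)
The plan is to translate the condition ``$T$ preserves Birkhoff-James orthogonality at $x$'' into the single set-theoretic inclusion $T(x^{\perp_B})\subseteq (Tx)^{\perp_B}$, and then to exploit the decomposition $x^{\perp_B}=\bigcup_{V\in\mathcal{V}(x)}V$ furnished by Lemma \ref{Lem-asso-cone}. The two structural facts I would lean on are this covering of $x^{\perp_B}$ by associated cones and the convexity of those cones (property (i) of the proposition on associated cones above); combined with the linearity of $T$, these should make both implications short.

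For the forward implication, suppose $T$ preserves Birkhoff-James orthogonality at $x$ and fix an associated cone $V\in\mathcal{V}(x)$. Since $V\subseteq x^{\perp_B}$, the hypothesis immediately gives $T(V)\subseteq (Tx)^{\perp_B}$. The key point is then that $T(V)$ is \emph{itself} convex: $V$ is convex, and the image of a convex set under the linear map $T$ is again convex. Hence the choice $S=T(V)$ already furnishes a convex subset of $(Tx)^{\perp_B}$ containing $T(V)$, which is exactly what is required; no genuine construction of $S$ is needed beyond $T(V)$ itself.

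For the converse, assume that for each $V\in\mathcal{V}(x)$ there is a convex set $S_V\subseteq (Tx)^{\perp_B}$ with $T(V)\subseteq S_V$, and take any $y\in x^{\perp_B}$. By Lemma \ref{Lem-asso-cone} there is a cone $V\in\mathcal{V}(x)$ with $y\in V$, whence $Ty\in T(V)\subseteq S_V\subseteq (Tx)^{\perp_B}$, that is, $Tx\perp_B Ty$. As $y$ was arbitrary, $T$ preserves Birkhoff-James orthogonality at $x$.

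The only genuine subtlety, and the point I would emphasize, is that $(Tx)^{\perp_B}$ need not be convex, so a priori the inclusion $T(V)\subseteq (Tx)^{\perp_B}$ says strictly less than the existence of a convex $S$ sandwiched between $T(V)$ and $(Tx)^{\perp_B}$. What makes the characterization clean is the realization that this apparent strengthening costs nothing: because each associated cone is convex and $T$ is linear, $T(V)$ is automatically convex and can play the role of $S$. (Correspondingly, the convexity of $S_V$ is not even used in the converse, where only the covering property of Lemma \ref{Lem-asso-cone} is invoked.) I therefore expect no real obstacle beyond correctly packaging these two facts.
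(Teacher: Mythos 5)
Your argument is correct and is essentially identical to the paper's proof: both directions rest on the decomposition $x^{\perp_B}=\bigcup_{V\in\mathcal{V}(x)}V$ from Lemma \ref{Lem-asso-cone}, with the forward implication taking $S=T(V)$ (convex since $V$ is convex and $T$ is linear) and the converse using only the covering property. Your closing remark that the convexity of $S$ is not needed in the converse is a fair observation, but it does not change the substance of the proof.
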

\begin{proof}
	We first prove the necessary part. Let $T\in\mathbb{L}(\mathbb{X},\mathbb{Y})$ be such that $T$  preserves Birkhoff-James orthogonality at $x.$ Then $T(x^{\perp_B})\subset (Tx)^{\perp_B}.$ Let $\mathcal{V}(x)$ be the set of all associated cones of $x.$ From  Lemma \ref{Lem-asso-cone}, it follows that $x^{\perp_B}=\bigcup\limits_{V\in \mathcal{V}(x)}V.$ So for each $V\in \mathcal{V}(x),$ $T(V)\subset (Tx)^{\perp_B}.$ Since $V$ is convex, it follows that $T(V)$ is a convex subset of $ (Tx)^{\perp_B}$ and this completes the proof of the necessary part.\\
	Next, we prove the sufficient part. Suppose that for each associated cone $V\in\mathcal{V}(x),$ there exists a convex set $S\subset (Tx)^{\perp_B}$ such that $T(V)\subset S\subset(Tx)^{\perp_B}.$ From  Lemma \ref{Lem-asso-cone}, it follows that $x^{\perp_B}=\bigcup\limits_{V\in \mathcal{V}(x)}V$ and so $T(x^{\perp_B})=T(\bigcup\limits_{V\in \mathcal{V}(x)}V)\subset (Tx)^{\perp_B}.$  This completes the proof.
\end{proof}

For an operator on a dual  normed linear space, we have the following refinement of Theorem \ref{theocharBJ}.

\begin{theorem}
	Let $\mathbb{X}$ and $\mathbb{Y}$ be   normed linear spaces and let $x^* \in\mathbb{X}^*$ be non-zero. Then $T\in\mathbb{L}(\mathbb{X}^*,\mathbb{Y})$  preserves Birkhoff-James orthogonality at $x^*$ if and only if for each associated cone $V\in\mathcal{V}(x^*),$ there exists a convex set $S\subset (Tx^*)^{\perp_B}$ such that $ T (Ext~(V\cap B_{\mathbb{X}^*})) \subset S.$
\end{theorem}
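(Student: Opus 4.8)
The plan is to derive this as a dual-space refinement of Theorem \ref{theocharBJ}, the gain being that in $\mathbb{X}^*$ the unit ball is weak$^*$-compact, so each associated cone is already determined by the extreme points of its truncation by $B_{\mathbb{X}^*}$. The necessity is essentially free from Theorem \ref{theocharBJ}: if $T$ preserves Birkhoff-James orthogonality at $x^*$, then for each $V\in\mathcal{V}(x^*)$ the set $S=T(V)$ is convex (since $V$ is convex and $T$ is linear) and is contained in $(Tx^*)^{\perp_B}$; as $Ext~(V\cap B_{\mathbb{X}^*})\subset V$, we immediately get $T(Ext~(V\cap B_{\mathbb{X}^*}))\subset T(V)=S$, which is exactly what is required.

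For sufficiency I would argue cone by cone. By Lemma \ref{Lem-asso-cone} we have $x^{*\perp_B}=\bigcup_{V\in\mathcal{V}(x^*)}V$, so it suffices to show $T(V)\subset (Tx^*)^{\perp_B}$ for each fixed $V$. Since $\perp_B$ is homogeneous, $(Tx^*)^{\perp_B}$ is a norm-closed cone, and since $V$ is itself a cone, it is enough to prove $T(V\cap B_{\mathbb{X}^*})\subset (Tx^*)^{\perp_B}$: any nonzero $y\in V$ can be written as $y=\|y\|\,(y/\|y\|)$ with $y/\|y\|\in V\cap S_{\mathbb{X}^*}\subset V\cap B_{\mathbb{X}^*}$, and homogeneity then promotes the inclusion from the normalized vector back to $y$.

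The heart of the argument is to recover $V\cap B_{\mathbb{X}^*}$ from its extreme points, and this is where the dual structure enters. By the Banach-Alaoglu theorem $B_{\mathbb{X}^*}$ is weak$^*$-compact, so $V\cap B_{\mathbb{X}^*}$ is a weak$^*$-compact convex set, and by the Krein-Milman theorem it is the weak$^*$-closed convex hull of $Ext~(V\cap B_{\mathbb{X}^*})$. Combining the hypothesis $T(Ext~(V\cap B_{\mathbb{X}^*}))\subset S$ with the convexity of $S$ and the linearity of $T$ gives $T(co(Ext~(V\cap B_{\mathbb{X}^*})))\subset S\subset (Tx^*)^{\perp_B}$; a closure step then transfers the inclusion to all of $V\cap B_{\mathbb{X}^*}$, using that $(Tx^*)^{\perp_B}$ is closed, and the scaling step of the previous paragraph completes the proof.

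I expect the delicate point to be precisely this final transfer. One must guarantee that $V\cap B_{\mathbb{X}^*}$ is genuinely weak$^*$-compact, i.e.\ that $V$ is weak$^*$-closed, which amounts to the extreme support functionals $f_i,f_j$ defining $V$ being weak$^*$-continuous (equivalently, lying in the canonical image of $\mathbb{X}$ in $\mathbb{X}^{**}$); and one must pass from the extreme points to the whole set through the merely norm-continuous operator $T$, reconciling the weak$^*$-closed convex hull produced by Krein-Milman with the convex set $S\subset (Tx^*)^{\perp_B}$. In the finite-dimensional settings driving the applications, such as $\ell_\infty^n$ and $\ell_1^n$, both difficulties disappear, since there the weak$^*$ and norm topologies coincide and a compact convex set is already the convex hull of its extreme points; the cleanest route is therefore to run the Banach-Alaoglu/Krein-Milman argument and to isolate these two continuity and closedness issues as the only substantive steps.
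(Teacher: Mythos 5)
Your proposal follows essentially the same route as the paper's own proof: necessity via Theorem \ref{theocharBJ} with $S=T(V)$, and sufficiency by applying Banach--Alaoglu and Krein--Milman to $V\cap B_{\mathbb{X}^*}$, pushing $Ext~(V\cap B_{\mathbb{X}^*})$ through $T$ into the convex set $S$, taking closures inside the closed set $(Tx^*)^{\perp_B}$, and finishing by homogeneity of Birkhoff--James orthogonality. The two ``delicate points'' you isolate --- the weak*-closedness of $V$ and the passage of the weak*-closed convex hull through the merely norm-continuous $T$ --- are precisely the steps the paper's proof asserts without comment, so your account matches it while being more explicit about where care is needed.
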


\begin{proof}
	Since the necessary part follows directly from Theorem \ref{theocharBJ}, we only prove the sufficient part. Suppose that for each associated cone $V\in\mathcal{V}(x^*),$ there exists a convex set $S\subset (Tx^*)^{\perp_B}$ such that  $T(Ext~(V\cap B_{\mathbb{X}^*}))\subset S.$
	Since $ B_{\mathbb{X}^*}$ is a weak*-compact convex subset of $\mathbb{X}^*$ and $V$ is a closed and convex subset of $\mathbb{X}^*,$ it follows that   $V\cap B_{\mathbb{X}^*}$ is a weak*-compact convex subset of $\mathbb{X}^*.$ So by the Krein-Milman Theorem, it follows that $V\cap B_{\mathbb{X}^*}=\overline{co(Ext~(V\cap B_{\mathbb{X}^*}))}^{w^*}.$ Now, by using the convexity of $ S $ and the continuity and the linearity of $ T, $ we obtain that
	\begin{eqnarray*}
		T(V\cap B_{\mathbb{X}^*})&=&T(\overline{co(Ext~(V\cap B_{\mathbb{X}^*}))}^{w^*})\\
		&\subset&\overline{T(co(Ext~(V\cap B_{\mathbb{X}^*})))} \\
		&=&	\overline{co(T(Ext~(V\cap B_{\mathbb{X}^*})))}\\
		& \subset & \overline{S}.
	\end{eqnarray*}
	Note that $ (Tx^*)^{\perp_B} $  is a closed set and $ S \subset  (Tx^*)^{\perp_B}. $ So  $ T(V\cap B_{\mathbb{X}^*}) \subset  (Tx^*)^{\perp_B} .$ 
	Now, \[\bigcup\limits_{V\in\mathcal{V}(x^*)}(V\cap B_{\mathbb{X}^*})=(\bigcup\limits_{V\in\mathcal{V}(x^*)}V)\cap B_{\mathbb{X}^*}=(x^*)^{\perp_B}\cap B_{\mathbb{X}^*},\]
	which shows that $T((x^*)^{\perp_B}\cap B_{\mathbb{X}^*})\subset (Tx^*)^{\perp_B}.$ Since Birkhoff-James orthogonality is homogeneous, it follows that $T((x^*)^{\perp_B})\subset(Tx^*)^{\perp_B}.$ This completes the proof.
\end{proof}

 We now turn our attention towards characterizing the local preservation of Birkhoff-James orthogonality at a point \textit{in a particular direction}. We require the following two lemmas for our purpose.
 
\begin{lemma}\label{lemmax+-}
	Let $\mathbb{X}$ be a  normed linear space and let $x \in \mathbb{X}$ be non-zero. Then
	\begin{itemize}
		\item[(i)] $x^+\setminus x^-=\big(\bigcup\limits_{f\in J(x)}\{z:f(z)>0\}\big)\cap (x^{\perp_B})^c.$
		\item[(ii)] $x^-\setminus x^+=\big(\bigcup\limits_{f\in J(x)}\{z:f(z)<0\}\big)\cap (x^{\perp_B})^c.$
	\end{itemize}
\end{lemma}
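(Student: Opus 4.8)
The plan is to establish each set equality via mutual inclusion, working with the functional characterizations of $x^+$ and $x^-$ provided by Theorem \ref{prop+ func}. I will prove (i) in detail; part (ii) then follows by applying (i) to $-x$, or equivalently by a symmetric argument swapping the roles of $\geq$ and $\leq$, since $J(-x) = -J(x)$ and the sign conditions reverse. So the crux is to show
\[
x^+\setminus x^- \;=\; \Big(\bigcup_{f\in J(x)}\{z:f(z)>0\}\Big)\cap (x^{\perp_B})^c.
\]

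For the forward inclusion, suppose $z \in x^+ \setminus x^-$. Since $z \in x^+$, Theorem \ref{prop+ func}(i) gives some $f \in J(x)$ with $f(z) \geq 0$. I claim this inequality must be strict: if $f(z) = 0$, then $f$ witnesses $x \perp_B z$ via James' characterization, so $z \in x^{\perp_B}$; but $x^{\perp_B} \subset x^-$ (any orthogonal direction lies in both $x^+$ and $x^-$), contradicting $z \notin x^-$. Hence $f(z) > 0$, placing $z$ in the first factor. Moreover $z \notin x^{\perp_B}$: otherwise $z \in x^{\perp_B} \subset x^-$, again a contradiction. Thus $z$ lies in the right-hand set.

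For the reverse inclusion, suppose $z \in \big(\bigcup_{f\in J(x)}\{z:f(z)>0\}\big)\cap (x^{\perp_B})^c$. Then there is $f \in J(x)$ with $f(z) > 0 \geq 0$, so Theorem \ref{prop+ func}(i) yields $z \in x^+$. It remains to check $z \notin x^-$. If instead $z \in x^-$, then combined with $z \in x^+$ we would have $z \in x^+ \cap x^- = x^{\perp_B}$, contradicting $z \in (x^{\perp_B})^c$; here I use the fact that $x^+ \cap x^- = x^{\perp_B}$, which is immediate from the definitions since $\|x+\lambda z\| \geq \|x\|$ for all $\lambda \geq 0$ and all $\lambda \leq 0$ together mean it holds for all scalars $\lambda$. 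Therefore $z \in x^+ \setminus x^-$, completing the equality.

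The main subtlety, and the step to state carefully, is the pair of auxiliary facts $x^{\perp_B} = x^+ \cap x^-$ and $x^{\perp_B} \subset x^-$ (and symmetrically $x^{\perp_B} \subset x^+$); these are the hinges of both directions and follow directly from unpacking Definitions of $x^+$, $x^-$, and Birkhoff-James orthogonality, so no deep input is needed. The only genuine asymmetry to be mindful of is the strictness argument in the forward direction: one must rule out $f(z) = 0$ for \emph{the particular} $f$ supplied by Theorem \ref{prop+ func}(i), which is exactly where membership in $(x^{\perp_B})^c$ is consumed. I expect no real obstacle beyond bookkeeping the quantifiers correctly, since the functional descriptions in Theorem \ref{prop+ func} reduce everything to elementary sign manipulations.
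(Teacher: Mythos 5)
Your proof is correct and follows essentially the same route as the paper's: both directions use Theorem \ref{prop+ func} together with the elementary facts $x^{\perp_B}=x^+\cap x^-$ and $x^{\perp_B}\subset x^\pm$. The only cosmetic difference is that in the forward direction the paper deduces $f(y)>0$ directly from $y\notin x^-$ via Theorem \ref{prop+ func}(ii), whereas you route through $f(z)=0\Rightarrow z\in x^{\perp_B}\subset x^-$; these are equivalent.
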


\begin{proof}
	We only prove (i) and note that (ii) can be proved using similar arguments.\\
	Let $y \in x^+\setminus x^-.$ Then clearly, $y\in(x^{\perp_B})^c.$ Now, $y \in x^+$ implies that there exists $f\in J(x)$ such that $f(y)\geq 0.$ Since $y \notin x^-,$ it follows that $f(y)>0$ and so 
	$y\in\big(\bigcup\limits_{f\in J(x)}\{z:f(z)>0\}\big)\cap (x^{\perp_B})^c.$ Thus,  $x^+\setminus x^-\subset\big(\bigcup\limits_{f\in J(x)}\{z:f(z)>0\}\big)\cap (x^{\perp_B})^c.$\\
	To prove the reverse containment, let $w\in\big(\bigcup\limits_{f\in J(x)}\{z:f(z)>0\}\big)\cap (x^{\perp_B})^c.$ So $x\not\perp_Bw$ and there exists $f\in J(x)$ such that $f(w)>0.$ Hence $w\in x^+.$ Since $x\not\perp_Bw,$ it follows that $w\notin x^-.$ Thus, $x^+\setminus x^-\supset\big(\bigcup\limits_{f\in J(x)}\{z:f(z)>0\}\big)\cap (x^{\perp_B})^c.$ Therefore, $x^+\setminus x^-=\big(\bigcup\limits_{f\in J(x)}\{z:f(z)>0\}\big)\cap (x^{\perp_B})^c.$
\end{proof}

\begin{lemma}\label{lemperpcon}
	Let $\mathbb{X}$ be a  normed linear space with $dim~ \mathbb{X}>2.$ Then for each $x\in \mathbb{X},$ the set $x^{\perp_B}\setminus \{0\}$ is connected.
\end{lemma}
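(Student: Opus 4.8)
The plan is to realize $x^{\perp_B}\setminus\{0\}$ as a union of connected pieces that pairwise overlap, and then to invoke the elementary topological fact that a family of connected sets sharing a common point has connected union.

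First I would dispose of the trivial case $x=0$: here $\|0+\lambda y\|\geq 0=\|0\|$ for all $y$, so $x^{\perp_B}=\mathbb{X}$, and the complement of the origin in a normed space of dimension at least $2$ is path-connected; since $dim~\mathbb{X}>2$, we are done. So assume $x\neq 0$, whence $J(x)\neq\emptyset$ by Hahn--Banach. The key structural step is the James Characterization recalled in the introduction, namely that $x\perp_B y$ if and only if $f(y)=0$ for some $f\in J(x)$. This yields
\[
x^{\perp_B}=\bigcup_{f\in J(x)}\ker f, \qquad\text{so}\qquad x^{\perp_B}\setminus\{0\}=\bigcup_{f\in J(x)}\big(\ker f\setminus\{0\}\big),
\]
the last equality because every $\ker f$ contains $0$.

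Next I would record two facts about the pieces $\ker f\setminus\{0\}$. First, each is connected: $\ker f$ is a subspace of codimension $1$, so $\dim\ker f\geq \dim\mathbb{X}-1\geq 2$ (or is infinite), and the complement of the origin in such a space is path-connected. Second, any two pieces overlap: for $f,g\in J(x)$, the subspace $\ker f\cap\ker g$ is precisely the kernel of the map $y\mapsto(f(y),g(y))\in\mathbb{R}^2$, hence has codimension at most $2$; since $dim~\mathbb{X}>2$, this forces $\ker f\cap\ker g\neq\{0\}$, that is, $(\ker f\setminus\{0\})\cap(\ker g\setminus\{0\})\neq\emptyset$. (In the infinite-dimensional setting the same reading works, as codimension at most $2$ still leaves a nonzero intersection.)

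Finally I would glue the pieces together. Fix any $f_0\in J(x)$ and any point $p\in\ker f_0\setminus\{0\}$. For each $f\in J(x)$, the set $(\ker f_0\setminus\{0\})\cup(\ker f\setminus\{0\})$ is a union of two connected sets with nonempty intersection, hence connected, and it contains the common point $p$. A union of connected sets sharing a common point is connected, and this union over $f\in J(x)$ equals $\bigcup_{f\in J(x)}(\ker f\setminus\{0\})=x^{\perp_B}\setminus\{0\}$, which is therefore connected. I do not anticipate a serious obstacle here; the only points requiring a little care are the edge case $x=0$, the correct reading of the codimension bound $\dim(\ker f\cap\ker g)\geq\dim\mathbb{X}-2$ so that two kernels always meet off the origin, and an explicit statement of the gluing lemma on connected unions.
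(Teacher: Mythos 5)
Your proposal is correct and follows essentially the same route as the paper: both decompose $x^{\perp_B}\setminus\{0\}$ into the punctured kernels $\ker f\setminus\{0\}$ for $f\in J(x)$, observe that each is (path-)connected because $\dim\ker f\geq 2$, and use the codimension bound $\dim(\ker f_1\cap\ker f_2)\geq \dim\mathbb{X}-2\geq 1$ to see that any two pieces meet off the origin, whence the union is connected. The only cosmetic difference is that you glue via the common-point lemma for connected unions while the paper concatenates paths to conclude path-connectedness directly.
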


\begin{proof} The result holds trivially if $ x = 0. $ Let $x\neq 0.$ Note that 
	\[x^{\perp_B}\setminus \{0\}=\bigcup\limits_{f\in J(x)}(\ker f \setminus \{0\}).\] 
	First we show that $\ker f \setminus \{0\}$ is path connected for all $f\in J(x).$ Since $dim ~\mathbb{X}>2,$ $dim~\ker f>1,$ for all $f\in J(x).$ Let $x,y\in \ker f \setminus \{0\}. $  We consider the following two cases separately:
	\medskip
	
	\noindent \textbf{Case 1:} Suppose that $x,y$ are linearly independent. Then $(1-\alpha)x+\alpha y\in \ker f \setminus \{0\},$ for all $\alpha \in [0,1]$ and so there exists a path in $ \ker f \setminus \{0\}$ between $x$ and $y.$
	\medskip
	
	\noindent \textbf{Case 2:} Suppose that $x,y$ are linearly dependent. Let $z\in \ker f \setminus \{0\} $ be such that $x,z$ are linearly independent.  Then there exists a path in $ \ker f \setminus \{0\}$ between $x$ and $z.$ Similarly, there exists a path in $ \ker f \setminus \{0\}$ between $y$ and $z.$ So there exists a path in $ \ker f \setminus \{0\}$ between $x$ and $y.$\\
	Thus, it follows from Case 1 and Case 2 that $\ker f \setminus \{0\}$ is path connected for all $f\in J(x).$\\
	Next, for any $f_1,f_2\in J(x),$ $dim~(\ker f_1 \cap \ker f_2)\geq 1.$ So $(\ker f_1 \setminus \{0\})\cap(\ker f_2 \setminus \{0\})\neq\emptyset,$ for all $f_1,f_2\in J(x).$ Therefore, $x^{\perp_B}\setminus \{0\}$ is path connected and hence connected. 
\end{proof}

We are now in a position to characterize the local preservation of Birkhoff-James orthogonality at a point in some direction.

\begin{theorem}\label{char dir}
	Let $\mathbb{X}$  and $\mathbb{Y} $ be  normed linear spaces with $dim~ \mathbb{X}>2$ and let $x \in \mathbb{X}$ be non-zero. Then $T\in\mathbb{L}(\mathbb{X}, \mathbb{Y})$ preserves Birkhoff-James orthogonality at $x$ in the direction $y\in x^{\perp_B}\setminus\{0\}$ if and only if there exist $u,v\in x^{\perp_B}\setminus\{0\}$ such that $Tu \in (Tx)^{+}$ and $Tv\in (Tx)^{-}.$ 
\end{theorem}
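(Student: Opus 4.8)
I read the statement as characterizing when $T$ preserves Birkhoff-James orthogonality at $x$ in \emph{some} direction $y \in x^{\perp_B}\setminus\{0\}$; that is, the left-hand side asserts the existence of $y \in x^{\perp_B}\setminus\{0\}$ with $Tx \perp_B Ty$. The guiding observation is that $(Tx)^{\perp_B} = (Tx)^+ \cap (Tx)^-$, which is immediate from the definitions of $(Tx)^\pm$, since $Tx \perp_B Tz$ means $\|Tx+\lambda Tz\| \ge \|Tx\|$ for every $\lambda$, i.e.\ for all $\lambda \ge 0$ and for all $\lambda \le 0$. I would dispose of the case $Tx = 0$ first: then $(Tx)^+ = (Tx)^- = \mathbb{Y}$ and $Tx \perp_B Tz$ for all $z$, so both sides of the equivalence hold trivially (a nonzero $y \in x^{\perp_B}$ exists because $\ker f \subseteq x^{\perp_B}$ is a hyperplane in a space of dimension $>2$). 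Hence I assume $Tx \neq 0$.

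The forward implication is then immediate: if $Tx \perp_B Ty$ for some $y \in x^{\perp_B}\setminus\{0\}$, then $Ty \in (Tx)^{\perp_B} = (Tx)^+ \cap (Tx)^-$, so $u = v = y$ witnesses the right-hand side.

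The real content is the reverse implication, which I would prove by contradiction using connectedness. Assume $u,v \in x^{\perp_B}\setminus\{0\}$ satisfy $Tu \in (Tx)^+$ and $Tv \in (Tx)^-$, but suppose no direction works, i.e.\ $Tz \notin (Tx)^{\perp_B}$ for every $z \in x^{\perp_B}\setminus\{0\}$. Since $Tx \neq 0$, fixing any $g \in J(Tx)$ and splitting on the sign of $g(\cdot)$ gives $\mathbb{Y} = (Tx)^+ \cup (Tx)^-$ via Theorem \ref{prop+ func}. As the image of $x^{\perp_B}\setminus\{0\}$ avoids the common part $(Tx)^{\perp_B}$, each $Tz$ lands in exactly one of the disjoint sets $A := (Tx)^+ \setminus (Tx)^-$ and $B := (Tx)^- \setminus (Tx)^+$; in particular $Tu \in A$ and $Tv \in B$. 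The crucial point is that $A$ and $B$ are \emph{open} in $\mathbb{Y}$. This is exactly what Lemma \ref{lemmax+-} delivers: $A = \big(\bigcup_{f \in J(Tx)}\{w : f(w) > 0\}\big) \cap \big((Tx)^{\perp_B}\big)^c$, an intersection of an open union of open half-spaces with the complement of the closed set $(Tx)^{\perp_B}$ (closed because it is the intersection over $\lambda$ of the closed sets $\{w : \|Tx + \lambda w\| \ge \|Tx\|\}$), and symmetrically for $B$.

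Finally, consider the continuous map $\Phi : x^{\perp_B}\setminus\{0\} \to \mathbb{Y}$ given by $\Phi(z) = Tz$. The preimages $\Phi^{-1}(A)$ and $\Phi^{-1}(B)$ are open and disjoint; under the contradiction hypothesis they cover $x^{\perp_B}\setminus\{0\}$, and they are nonempty since $u \in \Phi^{-1}(A)$ and $v \in \Phi^{-1}(B)$. This disconnects $x^{\perp_B}\setminus\{0\}$, contradicting Lemma \ref{lemperpcon}, which applies precisely because $\dim \mathbb{X} > 2$. Therefore some $y \in x^{\perp_B}\setminus\{0\}$ satisfies $Ty \in (Tx)^{\perp_B}$, i.e.\ $Tx \perp_B Ty$. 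I expect the one genuinely delicate step to be the openness of $A$ and $B$ (and hence the correct use of Lemma \ref{lemmax+-}); once that is in hand, the connectedness packaging is routine, and the hypothesis $\dim \mathbb{X} > 2$ enters only through Lemma \ref{lemperpcon}.
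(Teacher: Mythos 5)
Your proposal is correct and follows essentially the same route as the paper: the forward direction via $(Tx)^{\perp_B}=(Tx)^+\cap(Tx)^-$, and the converse by contradiction, splitting $x^{\perp_B}\setminus\{0\}$ into the two disjoint nonempty sets pulled back from $(Tx)^+\setminus(Tx)^-$ and $(Tx)^-\setminus(Tx)^+$, using Lemma \ref{lemmax+-} for openness and Lemma \ref{lemperpcon} for connectedness. Your explicit treatment of the $Tx=0$ case and of why $A\cup B$ covers everything is a small tidying-up that the paper leaves implicit, but it is not a different argument.
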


\begin{proof}
	We first prove the necessary part. Let $T\in\mathbb{L}(\mathbb{X}, \mathbb{Y})$ be such that $T$ preserves Birkhoff-James orthogonality at $x\in \mathbb{X}$ in  the direction $y$ for some $y\in x^{\perp_B}\setminus\{0\}.$ Then $Tx\perp_B Ty.$ This implies that $Ty \in (Tx)^{+}$ and $Ty\in (Tx)^{-},$ and completes the proof of the necessary part.\\
	We next prove the sufficient part. Let $T\in\mathbb{L}(\mathbb{X}, \mathbb{Y})$ and let  there exist $u,v\in x^{\perp_B}\setminus\{0\}$ be such that $Tu \in (Tx)^{+}$ and $Tv\in (Tx)^{-}.$
	Suppose on the contrary that there does not exist  $y\in x^{\perp_B}$ such that $Tx\perp_B Ty.$ Since $dim~\mathbb{X} >2, $ it follows from Lemma \ref{lemperpcon} that $x^{\perp_B}\setminus \{0\}$ is connected.
	Consider the following two sets 
	\[	A=\{y\in x^{\perp_B}\setminus\{0\}:Ty\in (Tx)^+\setminus (Tx)^-\},~B=\{y\in x^{\perp_B}\setminus\{0\}:Ty\in (Tx)^-\setminus (Tx)^+\}.\]
	Clearly, $x^{\perp_B}\setminus \{0\}=A\cup B$ and $A\cap B=\emptyset.$ From the hypothesis, it follows that $u\in A$ and $v\in B.$ Hence $A,B$ are nonempty. Using Lemma \ref{lemmax+-}, we get that 
	\[(Tx)^+\setminus (Tx)^-=\Big(\bigcup\limits_{f\in J(Tx)}\{z:f(z)>0\}\Big)\cap \big((Tx)^{\perp_B}\big)^c.\]
	Clearly, $(Tx)^+\setminus (Tx)^-$ is an open set in $\mathbb{Y}.$ Since $T$ is continuous, the set $\{y\in \mathbb{X}:Ty\in (Tx)^+\setminus (Tx)^-\}$ is open in $\mathbb{X}$ and so $\{y\in \mathbb{X}:Ty\in (Tx)^+\setminus (Tx)^-\}\cap (x^{\perp_B}\setminus \{0\})$ is open in $x^{\perp_B}\setminus \{0\}.$ Thus, $A$ is open in $x^{\perp_B}\setminus \{0\}.$ Similarly, $B$ is open in $x^{\perp_B}\setminus \{0\}.$ This contradicts the fact that  $x^{\perp_B}\setminus \{0\}$ is connected. Therefore, there exists  $y\in x^{\perp_B}\setminus\{0\}$ such that $Tx\perp_B Ty.$ This completes the proof.
\end{proof}

The following easy example shows that the above theorem may not be true in two-dimensional  normed linear spaces.

\begin{example}
	Let $\mathbb{X} = \ell_{\infty}^{2}.$ Consider the operator $ T\in\mathbb{L}(\mathbb{X})$ given by $T(x,y)=( 3x-y,4x-2y).$ Then it is easy to verify that $T$ does not preserve Birkhof-James orthogonality at $(1,1)$ but there exist $(1,0),(0,1)\in (1,1)^{\perp_B} $ such that $T(1,0) \in (T(1,1))^{+}$ and $T(0,1)\in (T(1,1))^{-}.$
\end{example}

In the two-dimensional case, we have the following characterization of the local preservation of Birkhoff-James orthogonality at a point in some direction. 
\begin{theorem}
	Let $\mathbb{X}$  be a two-dimensional  normed linear space and let $\mathbb{Y} $ be any  normed linear space. Let $x \in \mathbb{X}$ be non-zero. Then $T\in\mathbb{L}(\mathbb{X}, \mathbb{Y})$ preserves Birkhoff-James orthogonality at $x\in \mathbb{X}$ in the direction $y\in x^{\perp_B}\setminus\{0\}$ if and only if there exist $u,v\in x^{\perp_B}\setminus\{0\}$ such that $Tu \in (Tx)^{+}$ and $Tv\in (Tx)^{-}$ and $(1-\alpha)u+\alpha v\in x^{\perp_B}\setminus\{0\},$ for all $\alpha\in[0,1].$
\end{theorem}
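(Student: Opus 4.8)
The plan is to adapt, almost verbatim, the connectivity argument used in the proof of Theorem \ref{char dir}, with one essential modification forced by the dimension. In the higher-dimensional case, Lemma \ref{lemperpcon} guarantees that $x^{\perp_B}\setminus\{0\}$ is connected, and that connectedness is the engine of the proof. In two dimensions this fails in general: for instance, taking $\mathbb{X}=\ell_\infty^2$ and $x=(1,1)$ one computes $x^{\perp_B}=\{(y_1,y_2):y_1y_2\le 0\}$, whose punctured version splits into two components. The extra hypothesis that the segment $\{(1-\alpha)u+\alpha v:\alpha\in[0,1]\}$ lies entirely inside $x^{\perp_B}\setminus\{0\}$ is precisely what restores a connected path joining $u$ and $v$ within $x^{\perp_B}\setminus\{0\}$; the argument will then be run on this segment rather than on all of $x^{\perp_B}\setminus\{0\}$.

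For the necessary part, I would observe that if $T$ preserves Birkhoff-James orthogonality at $x$ in the direction $y$, then $Tx\perp_B Ty$, so directly from the definition $Ty\in (Tx)^+$ and $Ty\in (Tx)^-$; choosing $u=v=y$ makes the segment condition hold trivially, since the segment reduces to $\{y\}\subset x^{\perp_B}\setminus\{0\}$.

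For the sufficient part, I would first dispose of the case $Tx=0$ (where $Tx\perp_B Tw$ holds for every $w$, so preservation is automatic) and then assume $Tx\neq 0$. Arguing by contradiction, suppose no point of the segment maps to a vector orthogonal to $Tx$. The key bookkeeping fact is that for $Tx\neq 0$ every vector of $\mathbb{Y}$ lies in $(Tx)^+\cup(Tx)^-$: fixing any $f\in J(Tx)$ and testing the sign of $f(z)$, Theorem \ref{prop+ func} places $z$ in $(Tx)^+$ or in $(Tx)^-$ accordingly. Since $(Tx)^+\cap(Tx)^- = (Tx)^{\perp_B}$, the assumption that no segment point is sent into $(Tx)^{\perp_B}$ means the image of the segment meets only $(Tx)^+\setminus(Tx)^-$ and $(Tx)^-\setminus(Tx)^+$. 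This partitions the segment into $A=\{w:Tw\in (Tx)^+\setminus(Tx)^-\}$ and $B=\{w:Tw\in (Tx)^-\setminus(Tx)^+\}$, which are disjoint and cover it, and the hypotheses $Tu\in(Tx)^+$, $Tv\in(Tx)^-$ force $u\in A$ and $v\in B$, so both are nonempty.

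Finally, I would invoke the explicit description in Lemma \ref{lemmax+-}, which writes $(Tx)^+\setminus(Tx)^-$ and $(Tx)^-\setminus(Tx)^+$ as intersections of unions of open half-spaces with $((Tx)^{\perp_B})^c$; hence both sets are open in $\mathbb{Y}$. Pulling back along the continuous map $T$ and intersecting with the segment shows that $A$ and $B$ are relatively open in the segment. Since the segment $\{(1-\alpha)u+\alpha v:\alpha\in[0,1]\}$ is connected, it cannot be expressed as a disjoint union of two nonempty relatively open sets, a contradiction. Therefore some $y$ on the segment, which by hypothesis lies in $x^{\perp_B}\setminus\{0\}$, satisfies $Tx\perp_B Ty$, which is exactly directional preservation. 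The main obstacle is not any hard estimate but rather recognizing where the two-dimensional hypothesis enters: the openness/connectedness argument must be carried out on the segment $[u,v]$, and the segment condition is exactly the replacement for Lemma \ref{lemperpcon}, which is unavailable here.
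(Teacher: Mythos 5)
Your proof is correct, and it takes a genuinely different route from the paper's. The paper proves sufficiency by a case analysis on whether $x$ is smooth: it writes $x^{\perp_B}\setminus\{0\}=D\cup(-D)$ (with $D$ a connected cone cut out by two extreme support functionals in the non-smooth case, and a ray in the smooth case), uses the segment hypothesis only to conclude that $u$ and $v$ lie in the same component $D$, and then runs the openness/connectedness separation argument on all of $D$ (the smooth case degenerates to a one-line homogeneity argument). You instead run the same separation argument directly on the segment $[u,v]$ itself, which is connected by fiat, is contained in $x^{\perp_B}\setminus\{0\}$ by hypothesis, and contains $u\in A$ and $v\in B$; the openness of $(Tx)^{\pm}\setminus(Tx)^{\mp}$ via Lemma~\ref{lemmax+-} and continuity of $T$ then give the contradiction exactly as in the paper's Theorem~\ref{char dir}. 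Your version is cleaner in three respects: it needs no case split on the smoothness of $x$, it locates the orthogonality-preserving direction on the segment itself rather than merely somewhere in $x^{\perp_B}$, and it nowhere uses $\dim\mathbb{X}=2$, so it actually establishes the sufficiency of the segment condition in arbitrary dimension (where, of course, Theorem~\ref{char dir} shows the condition is superfluous). The explicit disposal of the case $Tx=0$ and the observation that $(Tx)^{+}\cup(Tx)^{-}=\mathbb{Y}$ are both correct and fill in details the paper leaves implicit.
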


\begin{proof}
	As the necessary part follows trivially, we only prove the sufficient part. Let $u,v\in x^{\perp_B}$ be such that $Tu \in (Tx)^{+}$ and $Tv\in (Tx)^{-}$ and $(1-\alpha)u+\alpha v\in x^{\perp_B}\setminus\{0\},$ for all $\alpha\in[0,1].$ We consider the following two cases separately.
	\medskip
	
	\noindent \textbf{Case 1:} Suppose that $x$ is not a smooth point and let $f_1,f_2\in Ext~J(x)$ be linearly independent. Then $ x^{\perp_B}\setminus\{0\}=D\cup(-D),$ where $D=\{y:f_1(y)\geq 0 \text{ and }f_2(y)\leq 0\}\setminus\{0\}.$ Clearly, $ x^{\perp_B}\setminus\{0\}$ is not connected and $D$ is connected.
	Since $(1-\alpha)u+\alpha v\in x^{\perp_B}\setminus\{0\},$ for all $\alpha\in[0,1],$ it follows that either $u,v\in D$ or $u,v\in -D.$ Without any loss of generality, let us assume that $u,v\in D.$ Suppose on the contrary that there does not exist  $z\in x^{\perp_B}$ such that $Tx\perp_B Tz.$ 
	Consider the following two sets 
	\[	D_1=\{y\in x^{\perp_B}:Ty\in (Tx)^+\setminus (Tx)^-\}\cap D,~D_2=\{y\in x^{\perp_B}:Ty\in (Tx)^-\setminus (Tx)^+\}\cap D.\]
	Then $D=D_1\cup D_2$ and $D_1\cap D_2=\emptyset.$ From the hypothesis, it follows that $u\in D_1$ and $v\in D_2.$ In particular, $D_1,D_2$ are nonempty. It follows from Lemma \ref{lemmax+-} that 
	\[(Tx)^+\setminus (Tx)^-=\Big(\bigcup\limits_{f\in J(Tx)}\{z:f(z)>0\}\Big)\cap \big((Tx)^{\perp_B}\big)^c.\]
	Clearly, $(Tx)^+\setminus (Tx)^-$ is an open set in $\mathbb{Y}.$ Since $T$ is continuous, the set $\{y\in \mathbb{X}:Ty\in (Tx)^+\setminus (Tx)^-\}$ is open in $\mathbb{X}$ and so $\{y\in \mathbb{X}:Ty\in (Tx)^+\setminus (Tx)^-\}\cap D$ is open in $D,$ i.e., $D_1$ is open in $D.$ Similarly, $D_2$ is open in $D.$ Thus, $D_1$  and $ D_2$ form a separation of $D,$ which contradicts the fact that  $D$ is connected.  Therefore, there exists  $y\in x^{\perp_B}\setminus\{0\}$ such that $Tx\perp_B Ty.$ 
	\medskip
	
	\noindent \textbf{Case 2:}  Suppose that $x$ is a smooth point. In this case, there exists a unique  $g \in J(x).$  Let $w\in \ker g.$ Then $ x^{\perp_B}\setminus\{0\}=D\cup-D,$ where $D=\{kw:k>0\}.$ Clearly, $ x^{\perp_B}\setminus\{0\}$ is not connected and $D$ is connected. 	Since $(1-\alpha)u+\alpha v\in x^{\perp_B}\setminus\{0\},$ for all $\alpha\in[0,1],$ it follows that either $u,v\in D$ or $u,v\in -D.$ Without any loss of generality, let us assume that $u,v\in D.$ Then $v=k_1u$ for some $k_1>0.$ Since $Tu \in (Tx)^{+}$, it follows that $k_1Tu \in (Tx)^{+}$ and so $Tv \in (Tx)^{+}.$ Hence $Tv\in (Tx)^{\perp_B},$  completing the proof of the theorem.
\end{proof}

We next study the preservation of Birkhoff-James orthogonality at a point with respect to the kernel of a support functional at that point and obtain some interesting consequences. First we prove the following lemma.

	\begin{lemma}\label{lemma-co-to-co}
	Let $\mathbb{X}$ and $\mathbb{Y}$ be  normed linear spaces. Let $f_1,f_2\in \mathbb{X}^*$ be linearly independent and let $g_1,g_2\in \mathbb{Y}^*.$ Suppose that $F=co(\{f_1,f_2\}),$ $G_1=co(\{g_1,g_2\})$ and 	$G_2=co(\{g_1,-g_2\}).$ Consider the sets $A=\bigcup\limits_{f\in F} \ker f,$ $B_1=\bigcup\limits_{g\in G_1} \ker g$ and $B_2=\bigcup\limits_{g\in G_2} \ker g.$ If $T\in\mathbb{L}(\mathbb{X},\mathbb{Y})$ is such that  $T(\ker f_1)\subset \ker g_1$ and $T(\ker f_2)\subset \ker g_2,$ then  $T(A)\subset B_1$ or $T(A)\subset B_2.$
\end{lemma}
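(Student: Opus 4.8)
The plan is to reduce the set-theoretic hypotheses to a pair of scalar identities and then read off the conclusion from a single sign computation. First I would record clean descriptions of the three sets in terms of signs of functionals. Since $F=co(\{f_1,f_2\})$ consists of the functionals $(1-t)f_1+tf_2$ with $t\in[0,1]$, a point $x$ lies in $A$ exactly when $(1-t)f_1(x)+tf_2(x)=0$ for some $t\in[0,1]$; an elementary check (solving for $t$ when both values are nonzero, and treating the boundary cases $f_1(x)=0$ or $f_2(x)=0$ separately) shows this is equivalent to $f_1(x)f_2(x)\le 0$. The same computation applied to $g_1,g_2$ gives $B_1=\{y:g_1(y)g_2(y)\le 0\}$, while applying it to $g_1,-g_2$ gives $B_2=\{y:g_1(y)g_2(y)\ge 0\}$, the inequality flipping because of the sign change in $G_2=co(\{g_1,-g_2\})$.

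Next I would exploit the hypotheses $T(\ker f_1)\subset\ker g_1$ and $T(\ker f_2)\subset\ker g_2$ through the standard fact that a linear functional vanishing on the kernel of a nonzero linear functional is a scalar multiple of it. The first inclusion says that $g_1\circ T$ vanishes on $\ker f_1$, and since $f_1\neq 0$ (as $f_1,f_2$ are linearly independent), this forces $g_1\circ T=\lambda_1 f_1$ for some $\lambda_1\in\mathbb{R}$; likewise $g_2\circ T=\lambda_2 f_2$ for some $\lambda_2\in\mathbb{R}$. This is the crucial reduction: it converts the geometric hypothesis into the pointwise identities $g_1(Tx)=\lambda_1 f_1(x)$ and $g_2(Tx)=\lambda_2 f_2(x)$, valid for every $x\in\mathbb{X}$, and requires no topological input.

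With these identities in hand the conclusion is immediate from the product
\[ g_1(Tx)\,g_2(Tx)=\lambda_1\lambda_2\,f_1(x)\,f_2(x). \]
The dichotomy in the statement corresponds exactly to the sign of $\lambda_1\lambda_2$. If $\lambda_1\lambda_2\ge 0$, then for every $x\in A$ we have $f_1(x)f_2(x)\le 0$, whence $g_1(Tx)g_2(Tx)\le 0$ and so $Tx\in B_1$, giving $T(A)\subset B_1$. If instead $\lambda_1\lambda_2\le 0$, the same product is $\ge 0$ for $x\in A$, so $Tx\in B_2$ and $T(A)\subset B_2$. When $\lambda_1\lambda_2=0$ both conclusions hold simultaneously, consistent with the disjunction.

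I do not anticipate a genuine obstacle here; the proof is short once the proportionality reduction is spotted. The only points demanding mild care are the sign characterizations of $A$, $B_1$, and $B_2$ — in particular getting the boundary inequalities right (weak rather than strict, so that the degenerate cases $f_i(x)=0$ and the possibility $\lambda_i=0$ are absorbed correctly). The one genuinely conceptual step is aligning the case split on $\mathrm{sign}(\lambda_1\lambda_2)$ with the disjunctive conclusion, and the displayed product identity makes this transparent.
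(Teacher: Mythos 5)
Your proof is correct, and it takes a genuinely different and substantially shorter route than the paper's. The key move you make --- observing that $T(\ker f_i)\subset\ker g_i$ means $\ker f_i\subset\ker(g_i\circ T)$, hence $g_i\circ T=\lambda_i f_i$ by the standard codimension-one fact --- collapses the whole lemma into the single identity $g_1(Tx)\,g_2(Tx)=\lambda_1\lambda_2\,f_1(x)f_2(x)$, after which the dichotomy is just the sign of the constant $\lambda_1\lambda_2$. Your sign characterizations of $A$, $B_1$, $B_2$ (weak inequalities on the product, via the intermediate value argument in the parameter $t$) are also correct and are exactly what is needed. The paper instead argues geometrically: it decomposes $A$ as $\overline K\cup(-\overline K)$ for the open cone $K=\{f_1>0,\ f_2<0\}$, disposes of the degenerate case where some $u\in\ker f_1\setminus\ker f_2$ has $Tu\in\ker g_2$ (showing this forces $T(\mathbb{X})\subset\ker g_2$), and then writes each $y\in K$ as a convex combination of points of $\ker f_1$ and $\ker f_2$ to prove that $g_2\circ T$ has constant sign on $\ker f_1\cap\{f_2<0\}$ and $g_1\circ T$ on $\ker f_2\cap\{f_1>0\}$, from which the sign of $g_1(Ty)g_2(Ty)$ is constant on $K$. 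Your proportionality argument makes that sign-consistency automatic and global, so it is both more elementary and more transparent; the paper's argument buys nothing extra here beyond staying within the cone-decomposition language it reuses elsewhere. The only care point, which you already flag, is keeping the inequalities weak so that the cases $f_i(x)=0$ and $\lambda_i=0$ are absorbed.
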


\begin{proof}
	If $g_1,g_2$ are linearly dependent, then the result holds trivially.
	Suppose that $g_1,g_2$ are linearly independent. Consider the set
	\begin{eqnarray*}
		K&=&\{y\in A:f_1(y)>0\text{ and }f_2(y)<0\}.
	\end{eqnarray*} 
	Clearly, $A=\overline K\cup {-\overline K}.$ 
	It is easy to observe that 
	\begin{eqnarray*}
	\text{(i)}~T(K)\subset B_1(or~B_2)&\implies&T(-K)\subset B_1(or~B_2),\\
	\text{(ii)}~T(K)\subset B_1(or~B_2)&\implies&T(\overline K)\subset B_1(or~B_2).
	\end{eqnarray*}
	Therefore, it suffices to show that $T(K)\subset B_1$ or $T(K)\subset B_2.$\\
	If there exists $u\in \ker f_1\setminus \ker f_2 $ such that $g_2(Tu)=0.$ Then $\mathbb{X}$ can be written as $\mathbb{X}=span\{u\}\oplus \ker f_2.$ Therefore, for each $x\in\mathbb{X},$  $x=\alpha u+h$ for some $\alpha \in \mathbb{R}$ and $h\in \ker f_2.$ Now, $g_2(Tx)=g_2(T(\alpha u+h))=0,$ which implies that $T(\mathbb{X})\subset \ker g_2$ and so $T(K)\subset B_1,B_2.$ 
	Similarly, if there exists $v\in \ker f_2\setminus \ker f_1$ such that $g_1(Tv)=0,$ then  $T(\mathbb{X})\subset \ker g_1$ and so $T(K)\subset B_1,B_2.$ \\
	Next, suppose that for each $u\in \ker f_1\setminus \ker f_2 ,$ $g_2(Tu)\not=0$ and for each  $v\in \ker f_2\setminus \ker f_1,$ $g_1(Tv)\not=0.$
	Let $y\in K.$ Then $f_1(y)>0\text{ and }f_2(y)<0.$ Consider $z\in \ker f_1$ such that $f_2(z)<f_2(y).$ Then for $t=\frac{-f_2(y)}{f_2(z)-f_2(y)} <0,$ we have $tz+(1-t)y\in \ker f_2.$ Let $w=tz+(1-t)y$ and so $y=\frac{-t}{1-t}z+\frac{1}{1-t}w.$ Therefore, for each $y\in K,$ there exist $z_y\in \ker f_1$ and $w_y\in \ker f_2$ such that 
	\[f_2(z_y)<0, f_1(w_y)>0 \text{ and } y=(1-\beta)z_y+\beta w_y \text{ for some }\beta \in (0,1).\]
	Then $g_1(Ty)g_2(Ty)=\beta(1-\beta) g_1(Tw_y)g_2(Tz_y).$\\
	We claim that for all $u\in \ker f_1 $ with $f_2(u)<0,$ $g_2(Tu)$ are of same sign. Contrary to our claim suppose that there exist $u_1,u_2\in \ker f_1$ with $f_2(u_1)<0$ and $f_2(u_2)<0$ such that 
	\[g_2(Tu_1)g_2(Tu_2)<0.\]
	Then there exists $0<\lambda<1$ such that 
	\[	(1-\lambda)g_2(Tu_1)+\lambda g_2(Tu_2)=0,~i.e.,~g_2(T((1-\lambda)u_1+\lambda u_2))=0.\]
	So, $T((1-\lambda)u_1+\lambda u_2)\in \ker g_2.$ It is easy to see that $(1-\lambda)u_1+\lambda u_2\notin \ker f_2.$ Let $u_3=(1-\lambda)u_1+\lambda u_2.$  Then $u_3\in \ker f_1\setminus \ker f_2 $ such that $g_2(Tu_3)=0.$ This contradicts the hypothesis that $g_2(Tu)\neq0,$ for all $u\in \ker f_1 $ with $f_2(u)<0.$ Thus our claim is established.\\
	Proceeding similarly we can show that for all $v\in \ker f_2 $ with $f_1(v)>0,$ $g_1(Tv)$ are of same sign. Therefore, either of the following two conditions hold true:
	\begin{eqnarray*}
	\text{(iii)}~g_1(Ty)g_2(Ty)&\geq&0, \text{ for all } y\in K.\\
	\text{(iv)}~g_1(Ty)g_2(Ty)&\leq&0, \text{ for all } y\in K.		 
	\end{eqnarray*}
	Thus, $T(K)\subset B_1$ or $T(K)\subset B_2,$ which completes the proof.
\end{proof}

 As an immediate consequence of Lemma \ref{lemma-co-to-co}, the following corollary is now immediate. 

\begin{cor}\label{conetocone}
	Let $\mathbb{X}$ and $\mathbb{Y}$ be  normed linear spaces. Let $f_1,f_2\in \mathbb{X}^*$ be linearly independent and let $g_1,g_2\in \mathbb{Y}^*.$ Suppose that $K=\{y\in \mathbb{X}:f_1(y)\geq 0\text{ and }f_2(y) \leq 0\},$ $K_1=\{y\in \mathbb{Y}:g_1(y) \geq 0\text{ and }g_2(y) \leq 0\}$ and 	$K_2=\{y\in \mathbb{Y}:g_1(y) \geq 0\text{ and }g_2(y) \geq 0\}.$ If $T\in\mathbb{L}(\mathbb{X},\mathbb{Y})$ is such that  $T(\ker f_1)\subset \ker g_1$ and $T(\ker f_2)\subset \ker g_2,$ then $T( K)\subset \pm  K_1$ or $T( K)\subset \pm  K_2.$
\end{cor}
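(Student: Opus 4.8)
The plan is to show that the three sign-condition cones $K$, $K_1$, $K_2$ of the corollary are exactly the symmetrized union-of-kernel sets $A$, $B_1$, $B_2$ of Lemma \ref{lemma-co-to-co}, after which the corollary follows by directly invoking that lemma. So the entire task reduces to a purely set-theoretic translation between the two ways of describing these cones, and then an appeal to the monotonicity $K\subset A\Rightarrow T(K)\subset T(A)$.

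The first and main step is to establish the identity $A = K \cup (-K)$, where $A = \bigcup_{f\in F}\ker f$ and $F = co(\{f_1,f_2\})$. For the inclusion $K \subset A$, given $y \in K$ (so $f_1(y)\geq 0$ and $f_2(y)\leq 0$), I would exhibit the explicit convex weight $t_0 = f_1(y)/(f_1(y)-f_2(y))$, check that $t_0\in[0,1]$, and verify $(1-t_0)f_1(y)+t_0 f_2(y)=0$, so that $y$ lies in the kernel of the functional $(1-t_0)f_1+t_0 f_2\in F$; the degenerate case $f_1(y)=f_2(y)=0$ is handled separately by noting $y\in\ker f_1\subset A$. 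Conversely, any $y\in A$ satisfies $(1-t)f_1(y)+t f_2(y)=0$ for some $t\in[0,1]$, which forces $f_1(y)f_2(y)\leq 0$, i.e. $y\in K\cup(-K)$.

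Running the identical argument with $g_1,g_2$ in place of $f_1,f_2$ yields $B_1 = K_1\cup(-K_1)=\pm K_1$, since $y\in B_1$ precisely when $g_1(y)g_2(y)\leq 0$. Replacing $g_2$ by $-g_2$ (so that the condition $g_2(y)\leq 0$ turns into $(-g_2)(y)\geq 0$) gives $y\in B_2$ precisely when $g_1(y)g_2(y)\geq 0$, hence $B_2 = K_2\cup(-K_2)=\pm K_2$. With these three identifications in hand, Lemma \ref{lemma-co-to-co} gives $T(A)\subset B_1$ or $T(A)\subset B_2$; since $K\subset A$, I conclude $T(K)\subset \pm K_1$ or $T(K)\subset \pm K_2$, as required.

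I do not anticipate any real obstacle: the content is entirely the bookkeeping correspondence between sign-defined cones and unions of kernels over a convex hull of two functionals. The only points demanding a little care are the degenerate situations --- a functional vanishing at the test point $y$, or $g_1,g_2$ being linearly dependent --- and the latter is already absorbed into the trivial case at the start of Lemma \ref{lemma-co-to-co}.
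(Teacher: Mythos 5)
Your set-theoretic translation is correct as far as it goes: with $F=co(\{f_1,f_2\})$ one indeed has $\bigcup_{f\in F}\ker f=K\cup(-K)$, and likewise $B_1=K_1\cup(-K_1)$ and $B_2=K_2\cup(-K_2)$, so Lemma \ref{lemma-co-to-co} yields $T(K)\subset K_1\cup(-K_1)$ or $T(K)\subset K_2\cup(-K_2)$. This is also all the paper itself says (the corollary is declared ``immediate''). The trouble is that the conclusion ``$T(K)\subset\pm K_1$'' is used later, in the proof of Theorem \ref{theexttoext}, in the disjunctive sense ``$T(K)\subset K_1$ \emph{or} $T(K)\subset -K_1$'', and that is strictly stronger than $T(K)\subset K_1\cup(-K_1)$. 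Convexity of $T(K)$ does not bridge the difference: taking $g_1,g_2$ to be the coordinate functionals on $\mathbb{R}^2$, the line $\{(r,-r):r\in\mathbb{R}\}$ is a convex cone contained in $K_1\cup(-K_1)$ but in neither $K_1$ nor $-K_1$. So if the disjunctive reading is intended (as the later application requires), your argument --- which uses only the \emph{statement} of Lemma \ref{lemma-co-to-co} --- does not reach it.

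The gap is easy to close, but one must go back to the hypotheses rather than to the lemma's conclusion. Since $g_i\circ T\in\mathbb{X}^*$ vanishes on the hyperspace $\ker f_i$, it is a scalar multiple of $f_i$, say $g_i\circ T=a_if_i$. On $K$ we have $f_1\geq 0$ and $f_2\leq 0$, so $g_1(Ty)=a_1f_1(y)$ has the constant sign of $a_1$ and $g_2(Ty)=a_2f_2(y)$ the constant sign of $-a_2$ throughout $K$; according to the four sign patterns of $(a_1,a_2)$, $T(K)$ lies in exactly one of $K_1$, $-K_1$, $K_2$, $-K_2$. (The same sign-constancy can be extracted from the body of the proof of Lemma \ref{lemma-co-to-co}, where $g_1(Ty)=\beta\,g_1(Tw_y)$ and $g_2(Ty)=(1-\beta)\,g_2(Tz_y)$ with signs independent of $y$, but the scalar-multiple observation is shorter and also covers the degenerate cases.) I recommend adding one of these arguments; as written, your proof establishes only the union form of the containment.
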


It is worth noting in this context that $ x^{\perp_B} = \bigcup\limits_{f\in J(x)} \ker f. $ Our next goal is to characterize the directional preservation of Birkhoff-James orthogonality at a point with respect to the kernel of a support functional. We require the following lemmas for our purpose. In the first lemma, we show that if $\mathbb{V} $ is a subspace of $\mathbb{X}$ contained in $  x^{\perp_B},$   then $\mathbb{V}$ is contained in the kernel of some $f\in J(x).$


\begin{lemma}\label{subspace}
	Let $\mathbb{X}$ be a   normed linear space and let $x\in\mathbb{X}$ be non-zero. Suppose that $\mathbb{V}$  is a subspace of $\mathbb{X}.$ Then $\mathbb{V}\subset x^{\perp_B}$ if and only if there exists  $f\in J(x)$ such that $\mathbb{V}\subset \ker f.$
\end{lemma}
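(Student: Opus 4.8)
The plan is to prove the easy implication directly from James' characterization, and to obtain the harder converse by a single application of the Hahn-Banach theorem, where the required norm bound comes from re-reading the orthogonality hypothesis as a distance condition.

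First I would dispose of the sufficiency, which is immediate. If there exists $f\in J(x)$ with $\mathbb{V}\subset\ker f$, then for every $v\in\mathbb{V}$ we have $f(v)=0$ with $f\in J(x)$, so by James' characterization $x\perp_B v$; hence $\mathbb{V}\subset x^{\perp_B}$.

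For the necessity, assume $\mathbb{V}\subset x^{\perp_B}$. The first observation I would make is that, because $\mathbb{V}$ is a \emph{subspace}, the hypothesis $x\perp_B v$ for every $v\in\mathbb{V}$ upgrades to the uniform statement
\[
\|x+w\|\geq\|x\|\quad\text{for all }w\in\mathbb{V},
\]
that is, $\mathrm{dist}(x,\mathbb{V})=\|x\|$, so that $0$ is a best approximation to $x$ out of $\mathbb{V}$. I would also record that $x\notin\mathbb{V}$, since $x\in\mathbb{V}\subset x^{\perp_B}$ would give $x\perp_B x$, forcing $x=0$. Consequently $\mathbb{V}\cap\mathrm{span}\{x\}=\{0\}$ and $\mathbb{V}\oplus\mathrm{span}\{x\}$ is a genuine direct sum.

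Next I would construct the target functional on this sum and extend it. Define $g$ on $\mathbb{V}\oplus\mathrm{span}\{x\}$ by $g(w+\lambda x)=\lambda\|x\|$, so that $g|_\mathbb{V}=0$ and $g(x)=\|x\|$, whence $\|g\|\geq 1$. The crucial estimate is $\|g\|\leq 1$: for $\lambda\neq 0$ the distance condition applied to $w/\lambda\in\mathbb{V}$ gives $\|x+w/\lambda\|\geq\|x\|$, and therefore
\[
|g(w+\lambda x)|=|\lambda|\,\|x\|\leq|\lambda|\,\|x+w/\lambda\|=\|w+\lambda x\|,
\]
while the case $\lambda=0$ is trivial. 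Thus $\|g\|=1$, and by the Hahn-Banach theorem I would extend $g$ to $f\in\mathbb{X}^*$ with $\|f\|=1$. Then $f(x)=\|x\|$ yields $f\in J(x)$ and $f|_\mathbb{V}=0$ yields $\mathbb{V}\subset\ker f$, completing the argument.

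The main obstacle, and the conceptual heart of the statement, is that the hypothesis $\mathbb{V}\subset x^{\perp_B}$ only furnishes, for each direction $v\in\mathbb{V}$ \emph{separately}, some support functional in $J(x)$ annihilating $v$, and these functionals may vary with $v$. Rewriting the orthogonality as the single identity $\mathrm{dist}(x,\mathbb{V})=\|x\|$ is exactly what allows Hahn-Banach to manufacture one norm-one functional vanishing on all of $\mathbb{V}$ simultaneously; everything else is routine.
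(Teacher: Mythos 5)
Your proof is correct and follows essentially the same route as the paper's: both define the functional $w+\lambda x\mapsto\lambda\|x\|$ on $\mathbb{V}\oplus\mathrm{span}\{x\}$, use the Birkhoff--James orthogonality of $x$ to each element of $\mathbb{V}$ to bound its norm by $1$, and then extend by Hahn--Banach to obtain the desired $f\in J(x)$ vanishing on $\mathbb{V}$. The only additions are cosmetic (the explicit remark that $x\notin\mathbb{V}$ and the reformulation as $\mathrm{dist}(x,\mathbb{V})=\|x\|$), which the paper leaves implicit.
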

\begin{proof}
	The sufficient part of the proposition follows trivially, and accordingly, we only prove the necessary part.
	Let $\mathbb{Y}=span\{x\}\oplus \mathbb{V}.$  Consider the linear functional $ f \in \mathbb{Y}^{*}$ given by 
	\[	f(\alpha x+y)=\alpha\|x\|,~\text{ for all } y\in\mathbb{V} \text{ and } \alpha\in \mathbb{R}.\]
	We claim that $\| f\|=1.$ Let $y\in\mathbb{Y}$ be arbitrary. Then $y=\alpha x+v,$ for some $\alpha\in \mathbb{R}$ and $v\in\mathbb{V}.$ Since $x\perp_B v,$ it follows that $\|y\|\geq|\alpha|\|x\|.$ This implies $|f (y)|=|\alpha|\|x\|\leq \|y\|.$ Therefore, it follows that $\| f\|\leq 1.$ As $ f(x)=\|x\|,$ it is clear that $\| f\|=1,$ proving our claim. By the Hahn-Banach Theorem,  we extend $f$ to a linear functional $\tilde f \in \mathbb{X}^{*}$  such that $\tilde f|_{\mathbb{V}}=f$ and $\|\tilde f\|=1.$ Thus, $\tilde f(x)=f(x)=\|x\|$ and so $\tilde f\in J(x).$ Also, $\tilde f(\mathbb{V})=\{0\}$ and so $\mathbb{V}\subset \ker \tilde f.$ This completes the proof.
\end{proof}

 Our next lemma deals with the case when the subspace $ \mathbb{V} $ is contained in the union of the kernels of all possible convex combinations of two functionals.
 
	\begin{lemma}\label{Lem subg}
	Let $\mathbb{V}$ be a subspace of a  normed linear space $\mathbb{X}.$ Suppose that $g_1,g_2\in \mathbb{X}^*$ are linearly independent and let $G=co(\{g_1,g_2\}).$ Then  $\mathbb{V}\subset \bigcup\limits_{g\in G} \ker g$ if and only if there exists $g\in G$ such that $\mathbb{V}\subset\ker g.$
\end{lemma}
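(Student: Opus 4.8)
The forward implication is trivial, so I focus on the converse. The plan is to translate the covering hypothesis into a pointwise sign condition on $\mathbb{V}$ and then exploit linear (in)dependence of the restricted functionals.

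First I would observe that for a fixed $v\in\mathbb{X}$, there exists $g\in G$ with $g(v)=0$ if and only if $g_1(v)g_2(v)\le 0$. Indeed, writing $g=(1-t)g_1+tg_2$, the equation $(1-t)g_1(v)+tg_2(v)=0$ admits a solution $t\in[0,1]$ precisely when $g_1(v)$ and $g_2(v)$ are of opposite sign or one of them vanishes. Consequently, the hypothesis $\mathbb{V}\subset\bigcup_{g\in G}\ker g$ is equivalent to the statement that $g_1(v)g_2(v)\le 0$ for every $v\in\mathbb{V}$.

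Next, set $\phi_1=g_1|_{\mathbb{V}}$ and $\phi_2=g_2|_{\mathbb{V}}$, which are linear functionals on $\mathbb{V}$. If either $\phi_1$ or $\phi_2$ is identically zero, then $\mathbb{V}\subset\ker g_1$ or $\mathbb{V}\subset\ker g_2$, and since $g_1,g_2\in G$ we are done. So I would assume both $\phi_1$ and $\phi_2$ are nonzero and argue that they must be linearly dependent on $\mathbb{V}$. The key observation is that if $\phi_1,\phi_2$ were linearly independent, then the map $v\mapsto(\phi_1(v),\phi_2(v))$ would be a surjection of $\mathbb{V}$ onto $\mathbb{R}^2$; in particular there would exist $v\in\mathbb{V}$ with $\phi_1(v)=\phi_2(v)=1$, contradicting the sign condition $g_1(v)g_2(v)\le 0$ established in the previous step.

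Finally, since $\phi_1,\phi_2$ are nonzero and linearly dependent, I can write $\phi_2=c\phi_1$ for some scalar $c\ne 0$. The sign condition $c\,\phi_1(v)^2=\phi_1(v)\phi_2(v)\le 0$, together with $\phi_1\not\equiv 0$, forces $c<0$, say $c=-s$ with $s>0$. Taking $t_0=\frac{1}{1+s}\in(0,1)$ and $g=(1-t_0)g_1+t_0g_2\in G$, a direct computation gives $g|_{\mathbb{V}}=(1-t_0)\phi_1+t_0\phi_2=\bigl((1-t_0)-t_0 s\bigr)\phi_1=0$, whence $\mathbb{V}\subset\ker g$, as required. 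I do not anticipate any serious obstacle here: the only points requiring care are the reduction to the pointwise sign condition and the surjectivity argument forcing linear dependence, both of which are elementary once the correct reformulation is in place.
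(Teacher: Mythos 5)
Your proof is correct. The forward direction is indeed trivial, your reduction of the hypothesis to the pointwise sign condition $g_1(v)g_2(v)\le 0$ on $\mathbb{V}$ is valid (intermediate value theorem on $t\mapsto(1-t)g_1(v)+tg_2(v)$), the surjectivity of $v\mapsto(\phi_1(v),\phi_2(v))$ under linear independence of the restrictions is standard, and the final computation producing $t_0=\frac{1}{1+s}$ checks out. The essential mechanism is the same as in the paper: both arguments come down to the fact that if $g_1|_{\mathbb{V}}$ and $g_2|_{\mathbb{V}}$ were linearly independent one could find $w\in\mathbb{V}$ with $g_1(w)=g_2(w)=1$, which no element of $G$ annihilates. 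The difference is in packaging. The paper runs a global proof by contradiction: it picks witnesses $u,v\in\mathbb{V}$ annihilated by distinct members of $G$, handles the boundary cases $\alpha=1$ and $\beta=1$ separately, and shows the $2\times 2$ matrix $\bigl(g_i(u),\,g_i(v)\bigr)$ is nonsingular to reach the contradiction; it never exhibits the functional $g$. Your version is direct and constructive: after establishing linear dependence $\phi_2=-s\phi_1$ you write down the convex combination explicitly, which also makes transparent why the answer lies in the \emph{convex} hull (the coefficient of proportionality must be negative because of the sign condition). This buys a cleaner argument with no case analysis, at the cost of nothing; either proof would serve the paper equally well.
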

\begin{proof}
	The sufficient part follows trivially, and therefore, we only prove the necessary part.
	Suppose on the contrary that there does not exist $g\in G$ such that $\mathbb{V}\subset\ker g.$ So there exist $u,v\in \mathbb{V}$ and $\alpha,\beta\in[0,1]$ with $ \alpha \neq \beta $ such that 
	\begin{eqnarray}\label{eq1}
		(1-\alpha)g_1(u)+\alpha g_2(u)=0,
	\end{eqnarray} 
	\begin{eqnarray}\label{eq2}
		(1-\alpha)g_1(v)+\alpha g_2(v)\neq0,
	\end{eqnarray} 
	\begin{eqnarray}\label{eq3}
		(1-\beta)g_1(u)+\beta g_2(u)\neq0,
	\end{eqnarray} 
	\begin{eqnarray}\label{eq4}
		(1-\beta)g_1(v)+\beta g_2(v)=0.
	\end{eqnarray} 
	We claim that $\alpha\neq1.$ Contrary to our claim suppose that $\alpha=1.$ So $\beta\neq1.$ From (\ref{eq1}) and (\ref{eq3}), we have $g_2(u)=0 $ and $g_1(u)\neq0.$ From (\ref{eq2}), we have $g_2(v)\neq0 .$ So 
	\[\begin{vmatrix}
		g_1(u) & g_1(v) \\
		g_2(u) & g_2(v) 
	\end{vmatrix} \neq 0.\]
		From this it is easy to see that there exist $r,s\in \mathbb{R}$ such that
	 \[g_1(ru+sv)=1\text{ and }g_2(ru+sv)=1.\]
	Since $ru+sv\in \mathbb{V},$ this is clearly a contradiction, thus establishing our claim. Using similar arguments, we have that $\beta\neq1.$ Then clearly, $g_2(u)\neq 0$ and $g_2(v)\neq 0.$ Next, from (\ref{eq1}) and (\ref{eq2}), we have 
	\[ \frac{g_1(u)}{g_2(u)} = -\frac{\alpha}{1-\alpha} \neq \frac{g_1(v)}{g_2(v)}. \]
	Therefore, 
	\[\begin{vmatrix}
		g_1(u) & g_1(v) \\
		g_2(u) & g_2(v) 
	\end{vmatrix} \neq 0.\]
Then there exist $r,s\in \mathbb{R}$ such that
\[g_1(ru+sv)=1\text{ and }g_2(ru+sv)=1.\]
	Since $ru+sv\in \mathbb{V},$ this is a contradiction. Therefore, there exists $g\in G $ such that $\mathbb{V}\subset\ker g.$
	
\end{proof}

The following remark further clarifies the importance of the above lemma.

\begin{remark}
In general, if a subspace is contained in a collection of hyperspaces then it is not necessarily true that the subspace is contained in one of the hyperspaces. For example, let $\mathbb{X}=\ell_{\infty}^3.$  Consider $f_1,f_2,f_3 \in \mathbb{X}^{*} $ given by \[f_1(x,y,z)=x,~f_2(x,y,z)=y,~f_3(x,y,z)=z.\] 
Let $F=co({f_1,-f_1,f_2})$ and let $ \mathbb{V} = \ker f_3. $ Then it is elementary to see that $ \mathbb{V} \subset \underset{f\in F}{\cup}\ker f$ but $\mathbb{V} \not\subset \ker f $ for any $ f \in F.$ 
\end{remark}

Now, we are in a position  to characterize the directional preservation of Birkhoff-James orthogonality at a point with respect to the kernel of a support functional.

\begin{theorem}\label{kerf-subset-kerg}
	Let $\mathbb{X}$ and $\mathbb{Y}$ be   normed linear spaces and let $x\in\mathbb{X}$ be non-zero. Let $T\in\mathbb{L}(\mathbb{X},\mathbb{Y})$ be such that $Tx\neq0.$ Then the following results hold:
		\begin{itemize}
		\item[(i)] Let $f\in J(x).$ Then $T$ preserves Birkhoff-James orthogonality at $x$ with respect to $\ker f$ if and only if there exists $g\in J(Tx)$ such that $T(\ker f)\subset \ker g.$	
       \item[(ii)] If $T$ preserves Birkhoff-James orthogonality at $x$ with respect to $\ker f_i,$ for all $1\leq i\leq n,$ then  there exist $g_1,g_2,\dots,g_n \in J(Tx)$ such that $T(\ker f_i)\subset \ker g_i,$ for all $1\leq i\leq n.$ Moreover, 
	for each $f\in co(\{f_1,f_2,\dots,f_n\}),$ there exists $g\in co(\{g_1,g_2,\dots,g_n\})$ such that $T(\ker f)\subset \ker g.$
	\end{itemize}
\end{theorem}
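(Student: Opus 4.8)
The plan is to dispatch (i) directly and then bootstrap (ii) from it by means of a rigidity property of the functionals $g_i\circ T$.

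For (i), the sufficiency is immediate from James' characterization: if $g\in J(Tx)$ satisfies $T(\ker f)\subset\ker g$, then for every $z\in\ker f$ we have $g(Tz)=0$ with $g\in J(Tx)$, whence $Tx\perp_B Tz$; since $\ker f\subset x^{\perp_B}$ (because $f\in J(x)$), this is exactly preservation at $x$ with respect to $\ker f$. For the necessity, I would note that preservation at $x$ with respect to $\ker f$ says precisely $T(\ker f)\subset(Tx)^{\perp_B}$. Now $T(\ker f)$ is a \emph{subspace} of $\mathbb{Y}$ lying inside $(Tx)^{\perp_B}$, and since $Tx\neq 0$, Lemma \ref{subspace} applied in $\mathbb{Y}$ (to the point $Tx$ and the subspace $T(\ker f)$) yields $g\in J(Tx)$ with $T(\ker f)\subset\ker g$. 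This closes part (i).

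For (ii), the first assertion is merely part (i) applied to each $f_i$, producing $g_1,\dots,g_n\in J(Tx)$ with $T(\ker f_i)\subset\ker g_i$. The crux of the ``moreover'' clause is the following rigidity, which I expect to be the main point. Fix any admissible pair $(f_i,g_i)$ and set $h_i:=g_i\circ T\in\mathbb{X}^*$. The inclusion $T(\ker f_i)\subset\ker g_i$ says exactly that $h_i$ vanishes on the hyperplane $\ker f_i$; since any functional vanishing on a codimension-one subspace is a scalar multiple of the functional defining it, this forces $h_i=c_i f_i$ for some scalar $c_i$. Evaluating at $x$ and using $g_i\in J(Tx)$ together with $f_i\in J(x)$ gives
\[ c_i\|x\|=c_i f_i(x)=h_i(x)=g_i(Tx)=\|Tx\|, \]
so $c_i=\|Tx\|/\|x\|$ is a positive constant \emph{independent of $i$}. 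Hence $g_i\circ T=c\,f_i$ for all $i$, where $c:=\|Tx\|/\|x\|>0$.

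With this identity the conclusion is immediate and, pleasingly, bypasses the cone dichotomy of Lemma \ref{lemma-co-to-co} as well as any linear-independence or dimension hypothesis. Given $f=\sum_{i=1}^n\lambda_i f_i\in co(\{f_1,\dots,f_n\})$, I would take the \emph{matching} convex combination $g:=\sum_{i=1}^n\lambda_i g_i\in co(\{g_1,\dots,g_n\})$. Then $g\circ T=\sum_i\lambda_i(g_i\circ T)=c\sum_i\lambda_i f_i=c\,f$, so for every $z\in\ker f$ we get $g(Tz)=c\,f(z)=0$, that is, $T(\ker f)\subset\ker g$, as required. The only genuine obstacle is spotting the rigidity $g_i\circ T=c\,f_i$ (equivalently, that $T^{*}$ sends each chosen $g_i$ onto the same positive multiple of $f_i$); once this is isolated, matching the coefficients of $g$ to those of $f$ is the natural and essentially forced choice, and the rest is routine.
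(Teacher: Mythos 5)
Your proof is correct, and part (ii) follows a genuinely different --- and considerably shorter --- route than the paper's. For (i) you argue exactly as the paper does, applying Lemma \ref{subspace} (in $\mathbb{Y}$, at the point $Tx$) to the subspace $T(\ker f)\subset (Tx)^{\perp_B}$. For (ii), the paper first settles the case $n=2$ by invoking the cone dichotomy of Lemma \ref{lemma-co-to-co}, ruling out the wrong alternative through a contradiction argument with the signs of $g_1(Tx)$ and $g_2(Tx)$, and then extracting the pointwise inclusion $T(\ker f)\subset\ker g$ from Lemma \ref{Lem subg}; the general case is handled by induction. Your rigidity observation short-circuits all of this: since $f_i\in J(x)$ is nonzero, $\ker f_i$ is a hyperplane, so $T(\ker f_i)\subset\ker g_i$ forces $g_i\circ T=c_i f_i$, and the normalizations $f_i(x)=\|x\|$ and $g_i(Tx)=\|Tx\|$ pin down $c_i=\|Tx\|/\|x\|$ uniformly in $i$; the matching convex combination $g=\sum_i\lambda_i g_i$ (which lies in $J(Tx)$ by convexity of that set, and in $co(\{g_1,\dots,g_n\})$ by construction) then satisfies $g\circ T=cf$, whence $T(\ker f)\subset\ker g$. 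This actually yields more than the paper proves --- it identifies $T^*g_i$ exactly and exhibits $g$ explicitly --- while dispensing with Lemma \ref{lemma-co-to-co}, Lemma \ref{Lem subg}, and the induction. What the paper's heavier machinery buys is reusability: Lemma \ref{lemma-co-to-co} and Corollary \ref{conetocone} are stated for arbitrary functionals, with no normalization at $x$ and $Tx$ (where your uniform constant $c$ would not be available), and they are invoked again later, e.g.\ in Theorem \ref{theexttoext}. One small point to make explicit in a write-up: the hypothesis of (ii) implicitly places each $f_i$ in $J(x)$ (by Lemma \ref{subspace}, $\ker f_i\subset x^{\perp_B}$ forces $f_i$ to be a positive multiple of a support functional), and your evaluation at $x$ uses exactly this normalization.
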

\begin{proof}
(i) As the sufficient part follows easily, we only prove the necessary part. Since $T$ preserves Birkhoff-James orthogonality at $x$ with respect to $\ker f$, it follows that $T(\ker f) \subset (Tx)^{\perp_B} . $ Clearly, $T(\ker f) $ is a subspace of $\mathbb{Y} $ and $ (Tx)^{\perp_B} = \bigcup\limits_{g\in J(Tx)} \ker g.$ Therefore, by Lemma \ref{subspace}, we get $g\in J(Tx)$ such that $T(\ker f)\subset \ker g.$ This completes the proof. \\

(ii)  The first part of (ii) follows easily from (i). We complete the proof of second part of (ii) in two steps. In Step 1, we show that for each $f\in co(\{f_1,f_2\}), $ there exists $g\in co(\{g_1,g_2\})$ such that $T(\ker f)\subset \ker g.$  In Step 2, we complete the proof by using the principle of induction.

\medskip

\noindent \textbf{Step 1:} If $f_1,f_2$ are linearly dependent then there is nothing to prove. Let $f_1,f_2$ be linearly independent. Suppose that $F_1=co(\{f_1,f_2\}),$ $F_1'=co(\{f_1,-f_2\}),$ $G_1=co(\{g_1,g_2\})$ and $G_1'=co(\{g_1,-g_2\}).$ Consider the sets $A_1=\bigcup\limits_{f\in F_1} \ker f,$ $A_2=\bigcup\limits_{f\in F_1'} \ker f,$ $B_1=\bigcup\limits_{g\in G_1} \ker g$ and $B_2=\bigcup\limits_{g\in G_1'} \ker g.$ Clearly, $\mathbb{X}=A_1\cup A_2.$ It follows from  Lemma \ref{lemma-co-to-co} that for $i=1,2,$
\[	\text{either }T(A_i)\subset B_1\text{ or } T(A_i)\subset B_2.\]
It is easy to observe that $Tx\notin B_1$ and $x\in A_2.$ This implies that $T(A_2)\subset B_2.$ \\
 Now, we claim that $T(\ker f_1)\setminus\ker g_2\neq\emptyset.$ Suppose on the contrary that $T(\ker f_1)\subset \ker g_2.$ Then there exists $u\in \ker f_1\setminus \ker f_2$ such that $g_2 (Tu)=0.$ Now, $\mathbb{X}=span\{u\}\oplus \ker f_2 $ implies that for each $z\in\mathbb{X},$ $z$ can be expressed as $z=\alpha u+h$ for some $\alpha \in \mathbb{R}$ and some $h\in \ker f_2.$ Hence $g_2(Tz)=g_2(T(\alpha u+h))=0.$ Thus, $T(\mathbb{X})\subset \ker g_2,$ which contradicts the fact that $Tx\notin \ker g_2.$
Next, we show that $T(A_1)\subset B_1.$ Suppose on the contrary that 
$T(A_1)\subset B_2.$ Then $T(\mathbb{X})\subset B_2.$ Clearly, $g_1(Tx)>0\text{ and }g_2(Tx)>0.$ Since $T(\ker f_1)\setminus\ker g_2\neq\emptyset,$ 
it follows that there exists $x_1\in \ker f_1$ such that $Tx_1\notin\ker g_2.$ So we can find a  $k\in\mathbb{R}$ such that $kg_2(Tx_1)>g_2(Tx).$ Let $x_2=x-kx_1.$ Then $g_1(Tx_2)>0$ and $g_2(Tx_2)<0.$ Hence $Tx_2\notin B_2.$ This contradicts the hypothesis that $T(\mathbb{X})\subset B_2,$ proving our claim. Hence $T(A_1)\subset B_1,$ i.e., $T\Big(\bigcup\limits_{f\in F_1} \ker f\Big)\subset \bigcup\limits_{g\in G_1} \ker g.$ From Lemma  \ref{Lem subg}, it follows that for each $f\in F_1 $ there exists $g\in G_1$ such that $T(\ker f)\subset \ker g.$\\

\medskip

\noindent \textbf{Step 2:}  Let $P(n)$ represent the statement ``for each $f\in co(\{f_1,f_2,\dots,f_n\}),$ there exists $g\in co(\{g_1,g_2,\dots,g_n\})$ such that $T(\ker f)\subset \ker g$''. From (i), it follows that $P(1)$ holds.  Assuming that $P(k-1) $ holds  we  next show that $P(k) $  holds. 
Consider the set $F_k=co(\{f_1,f_2,\dots,f_k\})$ and $G_k=co(\{g_1,g_2,\dots,g_k\}).$ 
Let $\phi\in F_k.$ Then $\phi=\sum_{i=1}^{k}c_if_i,$ where $c_i\geq0$ and $\sum_{i=1}^{k}c_i=1.$
If $c_j=0,$ for all $1\leq j\leq k-1,$ then $ T(\ker \phi)\subset  \ker g_{k}$  and so $P(k)$ holds. Suppose $c_l\neq0$ for some $1\leq l\leq k-1.$
Then $\phi$ can be written as  
\[ \phi=\Big(\sum_{j=1}^{k-1}c_j\Big)\phi_1+c_{k}f_{k}, \, \mbox{where} \,  \phi_1=\sum_{i=1}^{k-1}\frac{c_t}{\sum_{j=1}^{m}c_j} f_i.\]
Clearly, $\phi_1\in F_{k-1}$ and so there exists $\psi_1 \in G_{k-1}$ such that $T(\ker \phi_1)\subset \ker \psi_1.$ Now, it follows from Step 1 that for   $ \phi \in co(\{\phi_1,f_{k}\}),$ there exists $\psi \in co(\{\psi_1,g_{k}\})$ such that   $ T(\ker \phi)\subset \ker \psi.$ Thus, $P(k)$ holds.  Hence by the principle of mathematical induction, $P(n) $ holds and this completes the proof of (ii). 
\end{proof}

The following corollary is an  immediate  consequence of the previous theorem. 
\begin{cor}\label{corpre}
	Let $\mathbb{X}$ and $\mathbb{Y}$ be  normed linear spaces and let $x \in \mathbb{X}$ be non-zero. Let $f_1,f_2,\dots,f_n \in J(x).$ Then $T\in\mathbb{L}(\mathbb{X},\mathbb{Y})$ preserves Birkhoff-James orthogonality at $x$ with respect to $\ker f_i,$ for all $1\leq i\leq n$  if and only if $T$ preserves Birkhoff-James orthogonality at $x$ with respect to $\ker f,$ for all $f\in co(\{f_1,f_2,\dots,f_n\}).$
\end{cor}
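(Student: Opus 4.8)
The plan is to read off both implications directly from Theorem \ref{kerf-subset-kerg}, after first clearing away the degenerate case and recording one structural fact. If $Tx=0$, then $(Tx)^{\perp_B}=\mathbb{Y}$, so $T$ preserves Birkhoff-James orthogonality at $x$ with respect to every subspace contained in $x^{\perp_B}$, and both sides of the asserted equivalence hold vacuously; hence I would assume $Tx\neq 0$. I would also note at the outset that $J(Tx)$ is a convex subset of $S_{\mathbb{Y}^*}$: if $g,h\in J(Tx)$ and $t\in[0,1]$, then $\|tg+(1-t)h\|\leq 1$ while $(tg+(1-t)h)(Tx)=\|Tx\|$, which forces $\|tg+(1-t)h\|=1$ and so $tg+(1-t)h\in J(Tx)$. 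The same argument applied to $x$ shows $J(x)$ is convex, whence $co(\{f_1,\dots,f_n\})\subset J(x)$; in particular $\ker f\subset x^{\perp_B}$ for each $f$ in this convex hull, so that the phrase ``preserves orthogonality with respect to $\ker f$'' is meaningful in the sense of Definition \ref{local-Z}.

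The backward implication is immediate, since each $f_i$ lies in $co(\{f_1,\dots,f_n\})$: if $T$ preserves Birkhoff-James orthogonality at $x$ with respect to $\ker f$ for every $f$ in the convex hull, then it does so in particular when $f=f_i$.

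For the forward implication, I would invoke Theorem \ref{kerf-subset-kerg}(ii). The hypothesis that $T$ preserves Birkhoff-James orthogonality with respect to each $\ker f_i$ produces $g_1,\dots,g_n\in J(Tx)$ with $T(\ker f_i)\subset\ker g_i$, together with the key conclusion that for every $f\in co(\{f_1,\dots,f_n\})$ there exists $g\in co(\{g_1,\dots,g_n\})$ with $T(\ker f)\subset\ker g$. Since $J(Tx)$ is convex and contains each $g_i$, we get $g\in J(Tx)$. Thus, for the given $f$ there is a functional $g\in J(Tx)$ with $T(\ker f)\subset\ker g$, and the sufficiency direction of Theorem \ref{kerf-subset-kerg}(i) then yields that $T$ preserves Birkhoff-James orthogonality at $x$ with respect to $\ker f$. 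As $f\in co(\{f_1,\dots,f_n\})$ was arbitrary, this completes the argument.

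Since all the computational content is absorbed into Theorem \ref{kerf-subset-kerg}, there is no genuine obstacle in this corollary; the only point requiring care is the convexity of $J(Tx)$, which is exactly what guarantees that the functional $g$ furnished by part (ii) again belongs to $J(Tx)$, so that the reverse direction of part (i) is applicable.
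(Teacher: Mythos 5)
Your proof is correct and follows exactly the route the paper intends: the paper presents this corollary as an immediate consequence of Theorem \ref{kerf-subset-kerg}, and your write-up supplies precisely the missing details (the trivial backward direction, the $Tx=0$ case, and the convexity of $J(Tx)$ needed to see that the functional $g\in co(\{g_1,\dots,g_n\})$ furnished by part (ii) lies in $J(Tx)$ so that part (i) applies).
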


The next theorem shows that extreme support functionals play a very special role in the local preservation of Birkhoff-James orthogonality. 

	\begin{theorem}\label{thchprwrt}
	Let $\mathbb{X}$ and $\mathbb{Y}$ be  normed linear spaces and let  $x \in \mathbb{X}$ be non-zero. Then $T\in\mathbb{L}(\mathbb{X},\mathbb{Y})$ preserves Birkhoff-James orthogonality at $x$ if and only if $T$ preserves Birkhoff-James orthogonality at $x$  with respect to $\ker f,$ for all $f\in Ext~J(x).$
\end{theorem}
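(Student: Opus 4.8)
The plan is to prove the two implications separately, with the forward direction essentially immediate and the converse resting on the cone decomposition of $x^{\perp_B}$ furnished by Lemma \ref{Lem-asso-cone} together with the convex-combination bridge of Corollary \ref{corpre}.

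For the necessity I would simply recall that for every $f\in J(x)$ the James characterization gives $\ker f\subset x^{\perp_B}$. Hence if $T$ preserves Birkhoff-James orthogonality at $x$, so that $T(x^{\perp_B})\subset (Tx)^{\perp_B}$, then in particular $T(\ker f)\subset (Tx)^{\perp_B}$ for each $f\in Ext~J(x)$, which is exactly preservation with respect to $\ker f$. No extremality is used here, so this direction costs nothing.

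For the sufficiency I would first dispose of the degenerate case $Tx=0$, where $(Tx)^{\perp_B}=\mathbb{Y}$ and both conditions hold trivially; so assume $Tx\neq 0$. By Lemma \ref{Lem-asso-cone} it then suffices to show $T(V)\subset (Tx)^{\perp_B}$ for every associated cone $V=V_{ij}$, where $f_i,f_j\in Ext~J(x)$. Fixing such a pair, the hypothesis says $T$ preserves orthogonality with respect to both $\ker f_i$ and $\ker f_j$, so Corollary \ref{corpre} (applied to the two functionals $f_i,f_j\in J(x)$) upgrades this to preservation with respect to $\ker f$ for every $f\in co(\{f_i,f_j\})$; that is, $T\big(\bigcup_{f\in co(\{f_i,f_j\})}\ker f\big)\subset (Tx)^{\perp_B}$. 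The remaining point is that the cone sits inside this union: for $y\in V_{ij}$ we have $f_i(y)\geq 0\geq f_j(y)$, so the continuous map $\alpha\mapsto (1-\alpha)f_i(y)+\alpha f_j(y)$ changes sign on $[0,1]$ and vanishes at some $\alpha_0$, placing $y$ in the kernel of $(1-\alpha_0)f_i+\alpha_0 f_j\in co(\{f_i,f_j\})$. Thus $V_{ij}\subset \bigcup_{f\in co(\{f_i,f_j\})}\ker f$, whence $T(V_{ij})\subset (Tx)^{\perp_B}$. As this holds for every associated cone, Lemma \ref{Lem-asso-cone} gives $T(x^{\perp_B})=\bigcup_{V}T(V)\subset (Tx)^{\perp_B}$, i.e. $T$ preserves Birkhoff-James orthogonality at $x$.

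I expect the only step needing genuine care to be the identification of each associated cone $V_{ij}$ with a subset of the union of kernels over $co(\{f_i,f_j\})$, since this is precisely what lets Corollary \ref{corpre}—a statement phrased in terms of kernels of convex combinations—be played against the cone decomposition of Lemma \ref{Lem-asso-cone}. It is also worth recording that distinct extreme support functionals are automatically linearly independent (since $f_i=-f_j$ would force $\|x\|=0$), so the degenerate case $f_i=f_j$, which yields $V_{ii}=\ker f_i$, is already covered directly by the hypothesis and requires no separate argument. Everything else reduces to the homogeneity and the union-preservation bookkeeping, which is routine.
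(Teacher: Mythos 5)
Your proposal is correct and follows essentially the same route as the paper: the paper takes an arbitrary $y\in x^{\perp_B}$, uses Singer's theorem (Theorem \ref{theoSinger}) to place it in $\ker((1-\alpha)\phi_1+\alpha\phi_2)$ for extreme $\phi_1,\phi_2\in Ext~J(x)$, and then invokes Corollary \ref{corpre} exactly as you do. Your detour through the associated-cone decomposition of Lemma \ref{Lem-asso-cone} plus the intermediate-value argument is only a repackaging of that same Singer-based placement, so the two proofs coincide in substance.
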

\begin{proof}
	The necessary part of the theorem being obvious, we only prove the sufficient part.	Let $T\in \mathbb{L}(\mathbb{X},\mathbb{Y}) $ be such that $T$ preserves Birkhoff-James orthogonality at $x$ with respect to $\ker f,$ for all $f\in Ext~J(x).$ Let $y\in x^{\perp_B}$ be arbitrary. From Theorem \ref{theoSinger}, it follows that there exist $\phi_1,\phi_2\in Ext~J(x)$ such that $y\in \ker((1-\alpha)\phi_1+\alpha\phi_2),$ for some $\alpha \in [0,1].$  From the hypothesis, it follows that $T$ preserves Birkhoff-James orthogonality at $x$ with respect to $\ker \phi_1$ and $\ker \phi_2.$ Now, it follows from Corollary \ref{corpre} that $T$ preserves Birkhoff-James orthogonality at $x$ with respect to $\ker \phi,$ for all $\phi\in co(\{\phi_1,\phi_2\})$ and so $Ty\in(Tx)^{\perp_B}.$ Since $y\in x^{\perp_B}$ is chosen arbitrarily, $T(x^{\perp_B})\subset(Tx)^{\perp_B}.$ Therefore, $T$ preserves  Birkhoff-James orthogonality at $x.$ This completes the proof.
\end{proof}

The following  example  shows that  local preservation of Birkhoff-James orthogonality at $x$ with respect to $\ker f ,$ \textit{for all} $ f \in Ext~ J(x)$ is essential for the above theorem to hold true. Indeed, Theorem \ref{thchprwrt} need not be true even if the operator preserves Birkhoff-James orthogonality at $x$ with respect to  $\ker f_i,$ for all $1\leq i\leq k, $ where $f_1,f_2,\dots,f_k \in Ext~J(x)$ are linearly independent and $x$ is $k$-smooth.
\begin{example}
	Let $\mathbb{X}=\ell_{1}^{3}.$ Consider $T\in \mathbb{L}(\mathbb{X})$ defined by \[T(\alpha,\beta,\gamma)=(\alpha+\beta,-\beta,-\beta-\gamma), \text{ for all }(\alpha,\beta,\gamma)\in \mathbb{X}.\]
	Let $x=(1,0,0).$ Consider $f_1,f_2,f_3 \in S_{\mathbb{X}^*}$ given by
	\[	f_1(\alpha,\beta,\gamma)=\alpha+\beta+\gamma,~	f_2(\alpha,\beta,\gamma)=\alpha+\beta-\gamma,~
		f_3(\alpha,\beta,\gamma)=\alpha-\beta-\gamma.\]
	It is easy to observe that $ x$ is $3$-smooth and $f_1,f_2,f_3\in Ext~J(x),$ which are linearly independent. We show that $T$ preserves Birkhoff-James orthogonality at $x$ with respect to $\ker f_i$ for each $ i=1,2,3$ but $T$ does not not preserve Birkhoff-James orthogonality at $x.$ Clearly, $\ker f_1= span\{(1,-1,0),(0,1,-1)\}.$ Let $y\in \ker f_1$ be arbitrary. It is easy to see that $y=(a,b-a,-b)$ for some $a,b\in \mathbb{R}.$ Now, $Ty=(b,a-b,a)$ and therefore,  for any $\lambda\in\mathbb{R},$ \[\|Tx+\lambda Ty\|=|1+\lambda b|+|\lambda a-\lambda b|+|\lambda a|\geq |1+\lambda b+\lambda a-\lambda b-\lambda a|=1=\|Tx\|.\] Hence $Tx\perp_B Ty.$ Since $y\in \ker f_1$ is chosen arbitrarily, it follows that $T$ preserves  Birkhoff-James orthogonality at $x$ with respect to $\ker f_1.$ Similarly, we can show that $T$ preserves  Birkhoff-James orthogonality at $x$ with respect to  $\ker f_2$ and $\ker f_3.$ 
	But $T$ does not preserve Birkhoff-James orthogonality at $x$  as $x\perp_B (2,1,-1)$ but $Tx\not\perp_B T(2,1,-1).$
\end{example}

We next illustrate that for an operator on a two-dimensional  normed linear space, the set of all directions in which the operator preserves Birkhoff-James orthogonality at a given point has a nice geometric structure. 

\begin{theorem}\label{cone}
	Let $\mathbb{X}$ be a two-dimensional  normed linear space and let $x\in \mathbb{X}$ be non-smooth. Suppose that $T\in \mathbb{L}(\mathbb{X}).$ If  $V\in \mathcal{V}(x)$ is the associated cone of $x$ corresponding to two linearly independent $h_1,h_2\in Ext~J(x),$ then the set $D=\{y\in x^{\perp_B}:Tx\perp_B Ty\}\cap V$ is a normal cone.
\end{theorem}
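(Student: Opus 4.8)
The plan is to verify the three defining properties of a normal cone for $D,$ after first recording that $V\subseteq x^{\perp_B}.$ Indeed, every $y\in V$ satisfies $h_1(y)\geq 0\geq h_2(y),$ so choosing $\alpha\in[0,1]$ with $(1-\alpha)h_1(y)+\alpha h_2(y)=0$ and applying Theorem \ref{theoSinger} gives $x\perp_B y;$ hence the constraint $y\in x^{\perp_B}$ in the definition of $D$ is automatic and $D=\{y\in V:Tx\perp_B Ty\},$ with $0\in D.$ Properties (ii) and (iii) are then immediate: homogeneity of Birkhoff--James orthogonality gives $Tx\perp_B T(\alpha y)=\alpha Ty$ for $\alpha\geq 0,$ so $\alpha D\subseteq D;$ and since $D\subseteq V$ while $V\cap(-V)=\ker h_1\cap\ker h_2=\{0\}$ (as $h_1,h_2$ are linearly independent in the two-dimensional space), we get $D\cap(-D)\subseteq V\cap(-V)=\{0\}.$

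The whole difficulty is therefore the additivity $D+D\subseteq D.$ If $Tx=0$ then $D=V,$ which is readily checked to be a normal cone, so I assume $Tx\neq 0.$ I would parametrize the sector $V$ by its two extreme rays: choose generators $w_1\in\ker h_1$ and $w_2\in\ker h_2$ with $h_2(w_1)=-1$ and $h_1(w_2)=1,$ so that $V=\{sw_1+tw_2:s,t\geq 0\}$ and every nonzero $y\in V$ has a well-defined ray parameter $\tau(y)=t/(s+t)\in[0,1].$ Since $T(sw_1+tw_2)=sp+tq$ with $p=Tw_1,\,q=Tw_2,$ this parameter is preserved by $T,$ and $D$ is exactly the cone over those rays whose parameter lies in $E=\{\tau\in[0,1]:Tx\perp_B z(\tau)\},$ where $z(\tau)=(1-\tau)p+\tau q.$ A direct computation then reduces $D+D\subseteq D$ to $E$ being an interval: if $y_i=s_iw_1+t_iw_2\in D,$ then $\tau(y_1+y_2)$ is the convex combination of $\tau(y_1),\tau(y_2)$ with weights $s_i+t_i,$ which lies in $E$ precisely when $E$ is convex.

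The key computation forcing $E$ to be an interval uses the two-dimensionality of the domain. Expanding $x$ in the basis $\{w_1,w_2\}$ and applying $h_1,h_2$ (recall $h_1(x)=h_2(x)=\|x\|$) gives $x=\|x\|(w_2-w_1),$ whence $Tx=\|x\|(q-p);$ that is, the ``velocity'' $q-p$ of the segment $z(\tau)$ is a positive multiple of $Tx.$ Now take $g_1,g_2\in Ext~J(Tx);$ in the two-dimensional range one has $J(Tx)=co(\{g_1,g_2\}),$ so by Theorem \ref{theoSinger}, $Tx\perp_B z(\tau)$ if and only if $g_1(z(\tau))\,g_2(z(\tau))\leq 0.$ Each map $\tau\mapsto g_i(z(\tau))$ is affine with slope $g_i(q-p)=g_i(Tx)/\|x\|=\|Tx\|/\|x\|>0,$ so the quadratic $Q(\tau)=g_1(z(\tau))g_2(z(\tau))$ has positive leading coefficient $(\|Tx\|/\|x\|)^2$ and therefore $\{Q\leq 0\}$ is an interval. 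Hence $E$ is an interval, $D+D\subseteq D,$ and $D$ is a normal cone.

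The main obstacle is precisely this additivity, which fails for a generic pair of orthogonal directions; what rescues it is the special fact that $Tx$ points along the direction in which $z(\tau)$ traverses the image sector, making the relevant quadratic convex. I would also remark that degenerate configurations (for instance $p,q$ linearly dependent, or $Tx$ a smooth point so that $g_1=g_2$) are absorbed by the same argument, since the conclusion only needs the leading coefficient of $Q$ to be positive, which holds whenever $Tx\neq 0.$
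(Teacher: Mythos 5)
Your proof is correct, and it takes a genuinely different route from the paper's. The paper proves additivity of $D$ by working on the domain side: for $y_1,y_2\in D$ it picks $f_1,f_2\in J(x)$ with $f_i(y_i)=0$, notes that $T$ preserves orthogonality at $x$ with respect to $\ker f_1$ and $\ker f_2$, computes that $f_1(y_2)f_2(y_1)\le 0$ (using that $f_1,f_2$ are convex combinations of $h_1,h_2$), deduces that any $c_1y_1+c_2y_2$ with $c_1,c_2\ge 0$ lies in $\ker\bigl((1-t)f_1+tf_2\bigr)$ for some $t\in[0,1]$, and then invokes Corollary \ref{corpre} (hence the whole machinery of Lemma \ref{lemma-co-to-co}, Lemma \ref{Lem subg} and Theorem \ref{kerf-subset-kerg}) to conclude $Tx\perp_B T(c_1y_1+c_2y_2)$. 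You instead work on the image side: you parametrize the sector $V$ by the ray parameter $\tau$, observe that $\tau$ behaves as a convex combination under addition, reduce everything to convexity of the set $E=\{\tau: Tx\perp_B z(\tau)\}$, and prove that convexity by the pleasant observation that $x=\|x\|(w_2-w_1)$ forces the velocity $q-p$ of the segment $z(\tau)$ to be a positive multiple of $Tx$, so the quadratic $g_1(z(\tau))g_2(z(\tau))$ has positive leading coefficient $(\|Tx\|/\|x\|)^2$ and a convex nonpositivity set. Your argument is more elementary and self-contained --- it needs only Theorem \ref{theoSinger} and the standard fact that in a two-dimensional space $J(Tx)=co(\{g_1,g_2\})$ with $g_1,g_2\in Ext\,J(Tx)$, rather than the directional-preservation corollaries --- and it isolates cleanly \emph{why} additivity holds (the alignment of $Tx$ with $q-p$); the paper's approach, by contrast, fits the theorem into its general framework and reuses results it has already established. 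Your treatment of the degenerate cases ($Tx=0$, $Tx$ smooth, $p,q$ dependent) is also adequate as stated.
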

\begin{proof}
	Let us suppose that $V$ is the associated cone of $x$ corresponding to two linearly independent $h_1,h_2\in Ext~J(x).$ Then $V=\{y\in x^{\perp_B}:h_1(y)\geq0\text{ and } h_2(y)\leq0\}.$ Consider the set $A=\{y\in x^{\perp_B}:Tx\perp_B Ty\}.$ Let $D=A\cap V.$\\
	We show that $D$ is a normal cone.  Let $y_1,y_2\in D.$ Then $h_1(y_1)\geq0, h_2(y_1)\leq0\text{ and }h_1(y_2)\geq0, h_2(y_2)\leq0.$ Since $y_1,y_2\in x^{\perp_B},$ there exist $f_1,f_2\in J(x)$ such that $f_1(y_1)=0 \text{ and }f_2(y_2)=0.$ It is easy to observe that $T $ preserves Birkhoff-James orthogonality at $x$ with respect to $\ker~f_1 \text{ and } \ker~f_2.$ Clearly,
	\[ f_1 = (1-\alpha)h_1+\alpha h_2,~ f_2 = (1-\beta)h_1+\beta h_2,~ \textit{for some}~ \alpha, \beta \in [0, 1]. \]
	We claim that $f_1(y_2)f_2(y_1)\leq0.$ If $\alpha=0$ or $\beta=0,$ then clearly, our claim is true. Suppose that $\alpha \neq 0\text{ and }\beta \neq 0.$ Then 
	\[	f_1(y_1)=0\implies(1-\alpha)h_1(y_1)+\alpha h_2(y_1)=0
		\implies h_2(y_1)=-\frac{1-\alpha}{\alpha}h_1(y_1).\]
Similarly,		
\[f_2(y_2)=0\implies(1-\beta)h_1(y_2)+\beta h_2(y_2)=0
		\implies h_2(y_2)=-\frac{1-\beta}{\beta}h_1(y_2).\]
	Therefore,
	\begin{eqnarray*}
		f_1(y_2)f_2(y_1)&=&((1-\alpha)h_1(y_2)+\alpha h_2(y_2))((1-\beta)h_1(y_1)+\beta h_2(y_1)) \\
		&=&\left((1-\alpha)h_1(y_2)-\frac{\alpha(1-\beta)}{\beta}h_1(y_2)\right)\left((1-\beta)h_1(y_1)-\frac{\beta(1-\alpha)}{\alpha}h_1(y_1)\right)\\
		&=&\left(\frac{\beta(1-\alpha)-\alpha(1-\beta)}{\beta}\right)\left(\frac{\alpha(1-\beta)-\beta(1-\alpha)}{\alpha}\right)h_1(y_1)h_1(y_2)\\
		&=&\frac{(\beta-\alpha)(\alpha-\beta)}{\alpha\beta}h_1(y_1)h_1(y_2)\\
		& \leq & 0,
	\end{eqnarray*}
	establishing our claim. Let $y_3=c_1y_1+c_2y_2,$ $c_1,c_2\geq0.$ Clearly, $y_3\in V_{12}.$ Now,
	$f_1(y_3)f_2(y_3)=c_1 c_2f_1(y_2)f_2(y_1)\leq0$ and so there exists $t\in[0,1]$ such that $(1-t)f_1(y_3)+tf_2(y_3)=0,$ i.e., $y_3\in \ker~((1-t)f_1+tf_2).$ It follows from Corollary \ref{corpre} that $T$ preserves Birkhoff-James orthogonality at $x$ with respect to $\ker~((1-t)f_1+tf_2)$ and so $Tx\perp_BTy_3.$ Thus, $y_3\in D $ and therefore, $ D $ is a normal cone. This completes the proof of the theorem.
\end{proof}

As an easy consequence of Theorem \ref{cone}, we get the following corollary which shows that the orthogonality preserving set by an operator on a two-dimensional  normed linear space at a non-zero point can be expressed as the union of two normal cones. 
\begin{cor}
	Let $\mathbb{X}$ be a two-dimensional  normed linear space and let $x \in \mathbb{X} $ be non-zero. Suppose that $T\in \mathbb{L}(\mathbb{X}).$ Then the set $A=\{y\in x^{\perp_B}:Tx\perp_B Ty\}$ can be expressed as $A=D\cup (-D),$ where $D$ is a normal cone.
\end{cor}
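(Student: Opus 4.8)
The plan is to separate the argument into the cases where $x$ is non-smooth and where $x$ is smooth, since Theorem \ref{cone} applies only to non-smooth points. Throughout, I would first record the basic symmetry of the set $A$: because Birkhoff-James orthogonality is homogeneous in each argument, $x\perp_B y$ holds if and only if $x\perp_B(-y)$, and $Tx\perp_B Ty$ holds if and only if $Tx\perp_B T(-y)$. Consequently $y\in A$ if and only if $-y\in A$, that is, $A=-A$. This symmetry is what will ultimately produce the decomposition $A=D\cup(-D)$.

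Suppose first that $x$ is non-smooth. Since $\dim\mathbb{X}=2$, the subspace $span\,J(x)$ is all of $\mathbb{X}^*$, and $Ext~J(x)$ consists of two linearly independent support functionals $h_1,h_2$. Let $V$ be the associated cone of $x$ corresponding to $h_1$ and $h_2$, so that $V=\{y\in x^{\perp_B}:h_1(y)\ge0\text{ and }h_2(y)\le0\}$. By Theorem \ref{cone}, the set $D:=A\cap V$ is a normal cone. I would next observe, using Theorem \ref{theoSinger} (equivalently, Lemma \ref{Lem-asso-cone}), that in this two-dimensional setting $x^{\perp_B}=\{y:h_1(y)h_2(y)\le0\}=V\cup(-V)$, where the identity $V_{ij}=-V_{ji}$ noted earlier gives $-V=V_{21}$. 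As $A\subset x^{\perp_B}$, I can therefore write $A=(A\cap V)\cup(A\cap(-V))$. Invoking the symmetry $A=-A$, we have $A\cap(-V)=(-A)\cap(-V)=-(A\cap V)=-D$, so that $A=D\cup(-D)$, as required.

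Finally, suppose that $x$ is smooth, so that $J(x)=\{f\}$ is a singleton and $x^{\perp_B}=\ker f$ is a one-dimensional subspace, say $\ker f=\{kw:k\in\mathbb{R}\}$ for some non-zero $w$. By homogeneity of orthogonality, for every $k\neq0$ we have $Tx\perp_B T(kw)$ if and only if $Tx\perp_B Tw$, while $0\in A$ holds trivially. Hence either $Tx\perp_B Tw$, in which case $A=\ker f$ and I take $D=\{kw:k\ge0\}$, or $Tx\not\perp_B Tw$, in which case $A=\{0\}$ and I take $D=\{0\}$. In both subcases $D$ is a normal cone and $A=D\cup(-D)$.

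The only delicate points are the identification $x^{\perp_B}=V\cup(-V)$ together with the fact that $Ext~J(x)$ reduces to a linearly independent pair in the two-dimensional non-smooth case; once these are in place, the decomposition follows immediately from Theorem \ref{cone} and the symmetry $A=-A$. I expect this bookkeeping, rather than any substantive difficulty, to be the main thing to get right.
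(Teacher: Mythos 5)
Your argument is correct and follows exactly the route the paper intends: the paper presents this corollary without proof as an ``easy consequence'' of Theorem \ref{cone}, and your derivation --- $D=A\cap V$ is a normal cone by that theorem, $x^{\perp_B}=V\cup(-V)$ via Lemma \ref{Lem-asso-cone}, and the symmetry $A=-A$ yields $A=D\cup(-D)$ --- is precisely the intended bookkeeping. Your separate treatment of the smooth case (where $x^{\perp_B}$ is a line and $D$ is a ray or $\{0\}$) is a necessary supplement, since Theorem \ref{cone} is stated only for non-smooth $x$, and you handle it correctly.
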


\begin{remark}
	Theorem \ref{cone}  fails to hold if the dimension of the space is strictly greater than $2.$  Consider  $\mathbb{X}=\ell_{\infty}^{3}$ and  $T\in \mathbb{L}(\mathbb{X})$ defined as \[T(\alpha,\beta,\gamma)=(\alpha-\beta+\gamma,\alpha,2\gamma-\beta), \text{ for all }(\alpha,\beta,\gamma)\in \mathbb{X}.\]
	Let $x=(1,1,1).$ Then $Tx=x.$ Consider $f_1,f_2 \in S_{\mathbb{X}^*}$ given by
		\[f_1(\alpha,\beta,\gamma)=\alpha,~f_2(\alpha,\beta,\gamma)=\beta.\]
	It is easy to observe that $f_1,f_2 \in Ext~J(x).$ Let $V_{12}$ be the associated cone corresponding to $f_1 \text{ and } f_2.$ Let $D=\{y\in x^{\perp_B}:Tx\perp_B Ty\}\cap V_{12}.$ Clearly, $V_{12}=\{(\alpha,\beta,\gamma)\in \mathbb{X}:\alpha\geq 0\text{ and }\beta\leq0\}.$ Now, $y_1=(1,0,0),y_2=(0,-1,0)\in V_{12}.$ It is easy to observe that $Tx\perp_BTy_1$ and $Tx\perp_BTy_2.$ So $y_1,y_2\in D.$ But $Tx\not\perp_BT(\frac{y_1+y_2}{2})$ and so $\frac{y_1+y_2}{2}\notin D.$ Thus, $D$ is not convex and in particular, $D$ is not a normal cone. 
\end{remark}

Our next aim is to explore the relation between the orders of smoothness of $x$ and $Tx,$ when the operator $T$ locally preserves Birkhoff-James orthogonality at $x$ with respect to $\ker f,$ for some $ f \in \mathbb{X}^{*}. $ It is worth mentioning that the central idea of Theorem $ 2.9 $ of \cite{SMP24} depends upon such a connection. First we recall the following observation from \cite{SMP24}. 

\begin{lemma}\cite[Th. 2.7]{SMP24}\label{lem cotoaf}
	Let $ \mathbb{X} $ and $ \mathbb{Y} $ be  normed linear spaces of dimensions $n$ and $m,$ respectively. Let $ \{f_1, f_2, ..., f_k \}$ be a set of $ k $ linearly independent functionals in $\mathbb{X}^*$ and let $ \{g_1, g_2, ..., g_p \}$ be a set of $p~( <k)$ linearly independent functionals in $\mathbb{Y}^*.$ Let $F= co(\{f_1, f_2, ..., f_k \})$ and let
	$$G=\Big \{\sum_{i=1}^{p}\beta_ig_i :\sum_{i=1}^{p}\beta_i=1, \beta_i\in \mathbb{R} \,\, \forall\, i=1,2,\ldots,p \Big \}.$$
	Let $A=\bigcup\limits_{f\in F}\ker(f)$ and  $B=\bigcup\limits_{g\in G}\ker(g).$ If $T\in\mathbb{L}(\mathbb{X},\mathbb{Y})$ is such that $T(A)\subset B,$ then $T(\mathbb{X})\subset B .$
\end{lemma}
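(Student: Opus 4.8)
The plan is to translate the two set-theoretic conditions into explicit sign conditions on the functionals and then run a dimension count. First I would describe the complements. Since every $f\in F$ is a convex combination of the $f_i$, the value $f(x)$ ranges over $[\min_i f_i(x),\max_i f_i(x)]$ as $f$ runs through $F$; hence $x\in A$ iff $0$ lies in this interval, so that $A^{c}=P\cup N$, where $P=\{x: f_i(x)>0\ \forall i\}$ and $N=\{x:f_i(x)<0\ \forall i\}=-P$. Similarly, writing $w=(g_1(y),\dots,g_p(y))\in\mathbb{R}^p$, one checks that $y\notin B$ exactly when the affine hyperplane $\{\beta:\sum\beta_i=1\}$ misses $w^{\perp}$, which happens iff $w$ is a nonzero scalar multiple of $\mathbf 1=(1,\dots,1)$; thus $B^{c}=\{y: g_1(y)=\cdots=g_p(y)\neq 0\}$. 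Because $T(A)\subset B$ is given and $\mathbb{X}=A\cup A^{c}$, it suffices to prove $T(A^{c})\subset B$, so I would argue by contradiction and fix $x_0$ with $Tx_0\in B^{c}$ (which forces $x_0\in A^{c}$).

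Next I would pull the target condition back to $\mathbb{X}$. Put $h_j=g_j\circ T\in\mathbb{X}^{*}$ and $\phi_j=h_j-h_1$ for $2\le j\le p$, and set $W=\bigcap_{j=2}^{p}\ker\phi_j$. Then $T^{-1}(B^{c})=W\setminus\ker h_1$, and the hypothesis reads $W\setminus\ker h_1\subset A^{c}$; the goal $T(\mathbb{X})\subset B$ is exactly $W\subset\ker h_1$. Assuming the contrary, $h_1|_{W}\neq 0$, so $W$ splits into the two open half-spaces $W_{\pm}=\{x\in W:\pm h_1(x)>0\}$, each convex hence connected, with $W_-=-W_+$. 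Since $W_{\pm}\subset A^{c}=P\sqcup N$ and $P,N$ are disjoint open sets, connectedness forces each half-space into a single one of $P,N$.

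This is the crux: the sign of $f_i$ on $W$ is now completely slaved to the sign of $h_1$ on $W$, and a continuity (limit) argument across the separating hyperplane $W\cap\ker h_1$ gives $f_i(x)=0$ there for every $i$. Hence $\ker(f_i|_{W})=\ker(h_1|_{W})$, so $f_i|_{W}=\mu_i\,f_1|_{W}$ for scalars $\mu_i$ with $\mu_1=1$. Consequently $f_i-\mu_i f_1$ annihilates $W$ for every $i$, and for $i=2,\dots,k$ the functionals $f_i-\mu_i f_1$ are linearly independent (immediate from the independence of $f_1,\dots,f_k$). Thus $W^{\perp}$ contains $k-1$ linearly independent functionals, while $\dim W^{\perp}=\operatorname{codim}W\le p-1$. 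Since $p<k$ gives $p-1<k-1$, this is a contradiction, so $W\subset\ker h_1$ and therefore $T(\mathbb{X})\subset B$.

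I expect the main obstacle to be setting up the two complement descriptions correctly, particularly the affine-hull geometry yielding $B^{c}=\{g_1=\cdots=g_p\neq 0\}$, together with the proportionality step $\ker(f_i|_W)=\ker(h_1|_W)\Rightarrow f_i|_W=\mu_i f_1|_W$; once these are in place the inequality $p<k$ closes the argument at once. The symmetry $A^{c}=-A^{c}$ and $B^{c}=-B^{c}$ lets me ignore which of $P,N$ each half-space lands in, since only the proportionality (and not the sign of $\mu_i$) is needed.
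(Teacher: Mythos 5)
Your argument is correct, and it is worth noting that this paper contains no proof of the lemma to compare against: it is imported verbatim as Theorem 2.7 of \cite{SMP24}. Your proof is therefore a self-contained replacement, and every step checks out. The complement descriptions are right: $A^{c}=P\cup(-P)$ with $P=\{x:f_i(x)>0\ \forall i\}$ because $\{f(x):f\in F\}=[\min_i f_i(x),\max_i f_i(x)]$, and $B^{c}=\{y:g_1(y)=\cdots=g_p(y)\neq 0\}$ because the system $\beta\cdot\mathbf{1}=1$, $\beta\cdot w=0$ is unsolvable precisely when $w$ is a nonzero multiple of $\mathbf{1}$. The reduction of the conclusion to $W\subset\ker h_1$ with $W=\bigcap_{j=2}^{p}\ker(h_j-h_1)$ is exact, the convexity/connectedness step placing $W_{\pm}$ inside $P$ or $-P$ is sound, and the limit along $x+tw_0$ (with $w_0\in W_+$, $t\to 0^{\pm}$, using that $W$ is a subspace) does give $W\cap\ker h_1\subset\ker f_i$ for every $i$; hence $f_i|_W=\mu_i f_1|_W$ (with $\mu_1=1$, $f_1|_W\neq 0$ since $f_1>0$ on the nonempty set $W_+$), the annihilator of $W$ contains the $k-1$ linearly independent functionals $f_i-\mu_i f_1$, and this contradicts $\dim W^{\perp}=\operatorname{codim}W\leq p-1<k-1$. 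The only places where a reader would want a touch more detail are the standard facts you invoke implicitly (that $\ker(h_1|_W)\subseteq\ker(f_i|_W)$ forces proportionality on $W$, and that every point of $W\cap\ker h_1$ is a limit of points from both $W_+$ and $W_-$), but both are routine and correctly used. The argument is an efficient, purely linear-algebraic route that bypasses whatever machinery \cite{SMP24} uses, and it handles the degenerate case $p=1$ (where $W=\mathbb{X}$) without modification.
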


In \cite[Th. 2.9]{SMP24}, it was shown that the local preservation of Birkhoff-James orthogonality at a point by a linear operator also preserves the order of smoothness of that point in upward direction. We are now ready to present a refinement of \cite[Th. 2.9]{SMP24}. 

\begin{theorem}\label{theopresmooth}
	Let $\mathbb{X}$ and $\mathbb{Y}$ be finite-dimensional  normed linear spaces and let $x \in \mathbb{X}$ be $k$-smooth. Suppose that $f_1,f_2,\dots,f_k \in J(x)$ are linearly independent. If  $T\in\mathbb{L}(\mathbb{X},\mathbb{Y})$ preserves Birkhoff-James orthogonality at $x$ with respect to  $\ker f_i,$ for all $1\leq i\leq k, $ then $Tx$ is a  $p$-smooth point of $\mathbb{Y}$  for some $k\leq p.$\
\end{theorem}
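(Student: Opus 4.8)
The plan is to leverage Theorem \ref{kerf-subset-kerg}(ii) to transfer the hypothesis into a statement purely about kernels, and then use Lemma \ref{lem cotoaf} to pin down the order of smoothness of $Tx$. First I would apply Theorem \ref{kerf-subset-kerg}(ii): since $T$ preserves Birkhoff-James orthogonality at $x$ with respect to $\ker f_i$ for all $1\leq i\leq k$, there exist $g_1,g_2,\dots,g_k \in J(Tx)$ with $T(\ker f_i)\subset \ker g_i$ for each $i$, and moreover for every $f\in co(\{f_1,\dots,f_k\})$ there exists $g\in co(\{g_1,\dots,g_k\})$ with $T(\ker f)\subset \ker g$. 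This gives me the containment $T(A)\subset B$ where $A=\bigcup_{f\in F}\ker f$ with $F=co(\{f_1,\dots,f_k\})$, and $B=\bigcup_{g\in G}\ker g$ with $G=co(\{g_1,\dots,g_k\})$.

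The crux is to understand the dimension of $\operatorname{span}\{g_1,\dots,g_k\}$, which I denote $p$; I claim $Tx$ is $p$-smooth. Since each $g_i\in J(Tx)$, we certainly have $\operatorname{span}\,J(Tx)\supseteq \operatorname{span}\{g_1,\dots,g_k\}$, so the order of smoothness of $Tx$ is at least $p$. The content is showing it is \emph{exactly} $p$, i.e. that no support functional at $Tx$ lies outside $\operatorname{span}\{g_1,\dots,g_k\}$. The key step here is a contrapositive/minimality argument: suppose $g_1,\dots,g_k$ span only a $p$-dimensional space with $p<k$; I would extract a maximal linearly independent subset, relabel it $g_1,\dots,g_p$, and observe that every $g\in G$ is an affine combination of these $p$ functionals, so $G$ is exactly the affine set appearing in the hypothesis of Lemma \ref{lem cotoaf}. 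Applying Lemma \ref{lem cotoaf} (whose hypothesis $p<k$ is precisely satisfied) yields $T(\mathbb{X})\subset B$, hence $T(\mathbb{X})\subset \bigcup_{g\in G}\ker g\subset \bigcup_{g\in \operatorname{span}\{g_1,\dots,g_p\}}\ker g$, forcing in particular $g(Tx)=0$ for a suitable $g$, contradicting $g_i(Tx)=\|Tx\|\neq 0$.

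The main obstacle I anticipate is bookkeeping the exact relationship between $k$, the number of functionals, and $p=\dim\operatorname{span}\{g_1,\dots,g_k\}$, together with correctly invoking Lemma \ref{lem cotoaf}, since that lemma is stated for a set of $p$ linearly independent functionals with $p<k$ and an affine (not convex) span $G$. I would therefore be careful to pass from the convex hull $co(\{g_1,\dots,g_k\})$ to the affine span and to confirm that the strict inequality $p<k$ is what produces the collapse $T(\mathbb{X})\subset B$. Finally, to conclude that $Tx$ is $p$-smooth rather than merely of smoothness order $\geq p$, I would argue that once $p$ is taken to be the dimension of $\operatorname{span}\{g_1,\dots,g_k\}$ and this collapse is ruled out, every support functional at $Tx$ must in fact lie in this span: any additional independent support functional would enlarge a compatible family contradicting the structure forced above. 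This gives $\dim\operatorname{span}\,J(Tx)=p$ with $k\leq p$ exactly when the collapse does \emph{not} occur, so that $Tx$ is $p$-smooth for some $p\geq k$, completing the proof.
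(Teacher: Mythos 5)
Your core argument is correct and is essentially the paper's proof: both reduce the hypothesis to a containment $T(A)\subset B$, where $A=\bigcup_{f\in co(\{f_1,\dots,f_k\})}\ker f$ and $B$ is a union of kernels over an affine family generated by $p<k$ linearly independent functionals in $J(Tx)$, and then invoke Lemma \ref{lem cotoaf} to force $T(\mathbb{X})\subset B$, contradicting $g(Tx)=\|Tx\|\neq 0$. The paper enters via Corollary \ref{corpre} and takes $g_1,\dots,g_p$ to be a basis of $\operatorname{span}\,J(Tx)$ directly, while you enter via Theorem \ref{kerf-subset-kerg}(ii); either works. In both routes the passage from convex hulls to the affine set $G$ of Lemma \ref{lem cotoaf} rests on one small observation you should make explicit: if $g\in J(Tx)$ and $g=\sum a_ig_i$ with each $g_i\in J(Tx)$, then evaluating at $Tx$ gives $\sum a_i=1$, so dependent support functionals are automatically \emph{affine} combinations of a maximal linearly independent subfamily.

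However, your side-claim that $Tx$ is \emph{exactly} $p$-smooth for $p=\dim\operatorname{span}\{g_1,\dots,g_k\}$ is false, and the closing argument you sketch for it (``any additional independent support functional would enlarge a compatible family\dots'') is not a proof. The paper's own example following Theorem \ref{theexttoext} is a counterexample: for $\mathbb{X}=\ell_\infty^3$, $T(\alpha,\beta,\gamma)=(\alpha,\beta,\beta+\gamma)$ and $x=(1,1,0)$, one has $k=2$ and the only admissible choices are $g_1=f_1$, $g_2=f_2$ (the coordinate functionals), so $\dim\operatorname{span}\{g_1,g_2\}=2$, yet $Tx=(1,1,1)$ is $3$-smooth. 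Fortunately this claim is not needed: the theorem only asserts that the order of smoothness of $Tx$ is at least $k$, and your Lemma \ref{lem cotoaf} contradiction does establish exactly that (it shows $g_1,\dots,g_k$ must be linearly independent, whence $\dim\operatorname{span}\,J(Tx)\geq k$). Delete the exactness claim and your proof stands.
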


\begin{proof}
	Consider the set $F=co(\{f_1, f_2, ..., f_k\}).$ Suppose that $A=\bigcup\limits_{f\in F}\ker(f).$ Clearly, $A\subset x^{\perp_B}.$ Since   $T\in\mathbb{L}(\mathbb{X},\mathbb{Y})$ preserves Birkhoff-James orthogonality at $x$ with respect to  $\ker f_i,$ for all $1\leq i\leq k, $ it follows from Corollary \ref{corpre} that  $T$ preserves Birkhoff-James orthogonality at $x$ with respect to $\ker f,$ for all $f\in co(\{f_1,f_2,\dots,f_k\})$ and so $T(A)\subset {(Tx)}^{\perp_B}.$
	Suppose on the contrary that $Tx$ is a $p$-smooth point for some $p<k.$ Then there exist $p$ linearly independent elements $g_1, g_2, ..., g_p\in J(Tx).$ Consider the set  $G=\Big \{\sum_{i=1}^{p}\beta_ig_i :\sum_{i=1}^{p}\beta_i=1, \beta_i\in \mathbb{R} \,\, \forall\, i =1,2,\ldots,p\Big \}.$  It is easy to observe that  $J(Tx)\subset G.$ Let $B=\bigcup\limits_{g\in G}ker(g)$ and so  $  {(Tx)}^{\perp_B}\subset B.$ Since $T(A)\subset {(Tx)}^{\perp_B},$ clearly, $T(A)\subset B.$ Now, it follows from  Lemma \ref{lem cotoaf} that $T(\mathbb{X})\subset B.$ But for each $g\in G$, $g(Tx)=\| Tx\|$  and so  $Tx\notin B,$ a contradiction. Thus,  $Tx$ is a $p$-smooth point for some $p\geq k.$ This establishes the theorem.
\end{proof}

In the following result, we point out an important geometric attribute of the local preservation of orthogonality with respect to hyperspaces, by an operator on a finite-dimensional  normed linear space.

\begin{theorem}\label{theexttoext}
	Let $\mathbb{X}$ be a finite-dimensional  normed linear space and let $x \in \mathbb{X}$ be non-zero. Let $T\in\mathbb{L}(\mathbb{X})$ be such that  $T(x^{\perp_B})=(Tx)^{\perp_B}.$ Then for each $f\in Ext~J(x),$ there exists $g\in Ext~J(Tx) $ such that $T(\ker f)= \ker g.$ Moreover,  for each $V\in \mathcal{V}(x),$ there exists $W\in \mathcal{V}(Tx)$ such that $T(V)\subset W.$ 
\end{theorem}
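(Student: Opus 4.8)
The plan is to show first that the hypothesis $T(x^{\perp_B})=(Tx)^{\perp_B}$ forces $T$ to be invertible, and then to transport extreme support functionals of $x$ to extreme support functionals of $Tx$ by means of the adjoint $T^{*}\in\mathbb{L}(\mathbb{X}^{*})$, defined by $(T^{*}g)(y)=g(Ty)$. First I would note $Tx\neq 0$: if $Tx=0$ then $(Tx)^{\perp_B}=\mathbb{X}$, so $T(x^{\perp_B})=\mathbb{X}$ would make $T$ onto, hence injective in finite dimension, contradicting $Tx=0$ with $x\neq 0$. To see that $T$ is invertible, observe that $\mathbb{X}=span(x^{\perp_B}\cup\{x\})$ (this is immediate whether $x$ is smooth, where $x^{\perp_B}=\ker f$ has codimension $1$ and $x\notin\ker f$, or non-smooth, where two linearly independent $f_1,f_2\in Ext~J(x)$ give $\ker f_1+\ker f_2=\mathbb{X}$). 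Hence $T(\mathbb{X})=span((Tx)^{\perp_B}\cup\{Tx\})$; since any $g\in J(Tx)$ yields $\ker g\subset(Tx)^{\perp_B}$ with $Tx\notin\ker g$, the right-hand side is all of $\mathbb{X}$, so $T$ is onto and therefore invertible.

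Next, for a fixed $f\in Ext~J(x)$, since $\ker f\subset x^{\perp_B}$ we get $T(\ker f)\subset(Tx)^{\perp_B}$, so by Theorem \ref{kerf-subset-kerg}(i) there is $g\in J(Tx)$ with $T(\ker f)\subset\ker g$. Invertibility gives $\dim T(\ker f)=\dim\ker f=\dim\ker g$, whence $T(\ker f)=\ker g$. Equivalently $\ker(T^{*}g)=T^{-1}(\ker g)=\ker f$, so $T^{*}g=cf$ for some scalar $c$; evaluating at $x$ gives $c=\|Tx\|/\|x\|>0$. Applying the identical reasoning to the invertible operator $T^{-1}$ at the point $Tx$ — which also satisfies the hypothesis, as $T^{-1}((Tx)^{\perp_B})=x^{\perp_B}=(T^{-1}(Tx))^{\perp_B}$ — shows that for every $g\in Ext~J(Tx)$ there is $f\in J(x)$ with $T^{*}g=cf$, that is, $T^{*}(Ext~J(Tx))\subset cJ(x)$.

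The main obstacle is upgrading the functional $g$ above from $J(Tx)$ to $Ext~J(Tx)$, and I would settle it on the dual side. Set $c=\|Tx\|/\|x\|$ and compare the compact convex sets $D=T^{*}(J(Tx))$ and $E=cJ(x)$ in $\mathbb{X}^{*}$ (both are compact and convex, being affine images, respectively scalings, of the faces $J(Tx),J(x)$ of the dual ball). Since $T^{*}$ is a linear isomorphism it carries extreme points bijectively, so $Ext~D=T^{*}(Ext~J(Tx))$; the backward step of the previous paragraph gives $Ext~D\subset E$, hence $D=co(Ext~D)\subset E$ because $\mathbb{X}^{*}$ is finite-dimensional. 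Conversely the forward step gives $cf\in D$ for each $f\in Ext~J(x)$, i.e. $Ext~E\subset D$, whence $E=co(Ext~E)\subset D$. Therefore $D=E$, and comparing extreme points yields $T^{*}(Ext~J(Tx))=c\,Ext~J(x)$. Consequently, for each $f\in Ext~J(x)$ the relation $T^{*}g=cf$ forces $g\in Ext~J(Tx)$ by injectivity of $T^{*}$, establishing the first assertion: $T(\ker f)=\ker g$ with $g\in Ext~J(Tx)$.

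Finally, for the \emph{moreover} part, take $V=V_{ij}\in\mathcal{V}(x)$ corresponding to $f_i,f_j\in Ext~J(x)$, and let $g_i,g_j\in Ext~J(Tx)$ be the associated functionals with $T^{*}g_i=cf_i$ and $T^{*}g_j=cf_j$. Since $c>0$, for every $y\in\mathbb{X}$ we have $f_i(y)=\tfrac{1}{c}g_i(Ty)$ and $f_j(y)=\tfrac{1}{c}g_j(Ty)$; hence $y\in V$, meaning $f_i(y)\geq 0$ and $f_j(y)\leq 0$, gives $g_i(Ty)\geq 0$ and $g_j(Ty)\leq 0$. Thus $Ty$ lies in the associated cone $W=V_{g_ig_j}\in\mathcal{V}(Tx)$, so $T(V)\subset W$, which completes the argument.
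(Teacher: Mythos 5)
Your proof is correct, but it reaches the conclusion by a genuinely different route than the paper. The paper works entirely on the primal side: it enumerates $Ext~J(x)=\{f_1,\dots,f_n\}$, uses Theorem \ref{kerf-subset-kerg}(ii) to produce $g_1,\dots,g_n\in J(Tx)$ with $T\big(\bigcup_{f\in co(\{f_i\})}\ker f\big)\subset\bigcup_{g\in co(\{g_i\})}\ker g$, deduces from the hypothesis that this union of kernels is all of $(Tx)^{\perp_B}$, and then pins down $Ext~J(Tx)=\{g_1,\dots,g_n\}$ by a counting argument that combines the inclusion $Ext~J(Tx)\subset\{g_1,\dots,g_n\}$ with the lower bound on the order of smoothness of $Tx$ from Theorem \ref{theopresmooth}; the cone statement is then obtained from Corollary \ref{conetocone}, which only yields $T(V)\subset W$ or $T(V)\subset -W$ and relies on $-W$ also being an associated cone. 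You instead dualize: after establishing bijectivity you identify $T^*g=cf$ with $c=\|Tx\|/\|x\|>0$ whenever $T(\ker f)=\ker g$, run the same argument for $T^{-1}$ at $Tx$ (which satisfies the symmetric hypothesis), and conclude $T^*(J(Tx))=cJ(x)$ as compact convex sets, whence $T^*(Ext~J(Tx))=c\,Ext~J(x)$ because $T^*$ is an affine isomorphism and Minkowski's theorem applies in finite dimensions. This buys you two things: the extremality of $g$ comes out of a clean two-sided convexity argument rather than a cardinality comparison (the paper's step ``$G\subsetneq J(Tx)$ implies $\bigcup_{g\in G}\ker g\subsetneq (Tx)^{\perp_B}$'' is delicate, as the paper's own remark after Lemma \ref{Lem subg} shows that a hyperspace can hide inside a union of others), and your argument does not presuppose that $Ext~J(x)$ is finite, which the paper's enumeration implicitly does even though the theorem is stated for arbitrary finite-dimensional spaces. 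Moreover, the positivity of $c$ gives you $T(V_{ij})\subset W_{ij}$ directly, with the correct orientation and without invoking Lemma \ref{lemma-co-to-co} at all.
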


\begin{proof}
	Since $T(x^{\perp_B})=(Tx)^{\perp_B}$ and $x\neq 0$, it follows that $Tx\neq 0$ and $T$ is bijective. From the hypothesis, it follows that $T$ preserves Birkhoff-James orthogonality at $x$ with respect to $\ker f,$ for all $f\in Ext~J(x).$ 	Let $Ext~J(x)=\{f_1,f_2,\dots,f_n\}.$ Now, from  \ref{kerf-subset-kerg}, it follows that there exist $g_1,g_2,\dots,g_n \in J(Tx)$ such that $T(\ker f_i)=\ker g_i,$ for all $1\leq i\leq n$ and 
	$T\Big(\bigcup\limits_{f\in F} \ker f\Big)\subset \bigcup\limits_{g\in G} \ker g,$ where $F=co(\{f_1,f_2,\dots,f_n\})$ and $G=co(\{g_1,g_2,\dots,g_n\}).$
	Therefore, \[(Tx)^{\perp_B}=T(x^{\perp_B})=T\Big(\bigcup\limits_{f\in F} \ker f\Big)\subset \bigcup\limits_{g\in G} \ker g\subset (Tx)^{\perp_B}.\] 
	Hence $ \bigcup\limits_{g\in G} \ker g= (Tx)^{\perp_B}.$ Next, we claim that $ Ext~J(Tx)\subset \{g_1,g_2,\dots,g_n\}.$ Contrary to our claim suppose that there exists $h\in Ext~J(Tx)$ such that $ h\notin \{g_1,g_2,\dots,g_n\}.$ This implies that $G\subsetneq J(Tx)$ and so  $ \bigcup\limits_{g\in G} \ker g\subsetneq (Tx)^{\perp_B},$ a contradiction. This establishes our claim that $ Ext~J(Tx)\subset \{g_1,g_2,\dots,g_n\}.$ From Theorem \ref{theopresmooth}, it follows that if $x$ is a $k$-smooth point of $\mathbb{X},$ then $Tx$ is a $p$-smooth point of $\mathbb{X}$ for some $p\geq k.$ This implies that $|Ext~J(x)|\leq |Ext~J(Tx)|$ and so $ Ext~J(Tx)= \{g_1,g_2,\dots,g_n\}.$ Therefore,  for all $f\in Ext~J(x),$ $T(\ker f)= \ker g$ for some $g\in Ext~J(Tx).$
	
	Next, we show that for each $V\in \mathcal{V}(x),$ there exists $W\in \mathcal{V}(Tx)$ such that $T(V)\subset W.$ Let $V$ be the associated cone corresponding 
	to $f_1,f_2\in Ext~J(x),$ i.e., $V=\{y\in \mathbb{X}:f_1(y)\geq 0\text{ and }f_2(y)\leq 0\}.$ From the first part of the proof, it follows that $T(\ker f_1)=\ker g_1$ and $T(\ker f_2)=\ker g_2$ for some $g_1,g_2\in Ext~J(Tx).$ Let $W$ be the associated cone corresponding to $g_1,g_2,$ i.e., $W=\{y\in \mathbb{X}:g_1(y)\geq 0\text{ and }g_2(y)\leq 0\}.$ From Theorem \ref{kerf-subset-kerg}, it follows  that $T(V)\subset W\cup-W.$ Therefore, it follows from Corollary \ref{conetocone} that $T(V)\subset W$ or $T(V)\subset -W.$ This completes the proof.
\end{proof}
We note that the converse of Theorem \ref{theexttoext} may not be true, which is clear from the following example. 
\begin{example}
	Let $\mathbb{X}=\ell_{\infty}^{3}.$ Define $T\in \mathbb{L}(\mathbb{X})$ by \[T(\alpha,\beta,\gamma)=(\alpha,\beta,\beta+\gamma), \text{ for all }(\alpha,\beta,\gamma)\in \mathbb{X}.\]
	Let $x=(1,1,0).$ Then $Tx=(1,1,1).$ Consider $f_1,f_2 \in S_{\mathbb{X}^*}$ given by
		\[f_1(\alpha,\beta,\gamma)=\alpha,~f_2(\alpha,\beta,\gamma)=\beta.\]
	It is easy to observe that $f_1,f_2 \in Ext~J(x)$ and also  $f_1,f_2 \in Ext~J(Tx).$
	Clearly, $\ker f_1= span\{(0,1,0),(0,0,1)\}.$ Let $y\in \ker f_1$ be arbitrary, then $y=(0,b,c)$ for some $b,c\in \mathbb{R}.$ Now, $Ty=(0,b,b+c)$ and so $Ty\in \ker f.$ Since $y\in \ker f_1$ is chosen arbitrarily and $T$ is bijective, it follows that $T(\ker f_1)= \ker f_1.$ Similarly, we can show that $T(\ker f_2)= \ker f_2.$ But $T(x^{\perp_B})\neq(Tx)^{\perp_B}$  as $x\not\perp_B (1,1,-1)$ but $Tx\perp_B T(1,1,-1).$
\end{example}

In \cite[Lemma 2.13]{SMP24}, it was shown that in case of  two-dimensional polyhedral  normed linear spaces, if a non-zero linear operator preserves Birkhoff-James orthogonality at two linearly independent extreme points of the unit ball then the operator must be bijective.
In the following theorem, we show that in case of  $n$-dimensional  normed linear spaces if a linear operator  preserves  Birkhoff-James orthogonality at an $n$-smooth point $x$ then it is necessary for the operator to be bijective unless $Tx=0.$ 

\begin{theorem}\label{th bijective}
	Let $\mathbb{X}$ and $\mathbb{Y}$ be  $n$-dimensional  normed linear spaces. Let a non-zero $x\in \mathbb{X}$ be such that there exist $n$ linearly independent elements $f_1,f_2,\dots,f_n \in J(x).$ If $T\in\mathbb{L}(\mathbb{X},\mathbb{Y})$ preserves Birkhoff-James orthogonality at $x$ with respect to $\ker f_i,$ for all $1\leq i\leq n,$ then either $Tx=0$ or $T$ is bijective.
\end{theorem}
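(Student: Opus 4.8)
The plan is to dispose of the trivial alternative and then pass to the adjoint operator. If $Tx = 0$ there is nothing to prove, so I assume $Tx \neq 0$. Since $Tx \neq 0$ and $T$ preserves Birkhoff-James orthogonality at $x$ with respect to each $\ker f_i$, Theorem \ref{kerf-subset-kerg}(ii) supplies functionals $g_1, g_2, \dots, g_n \in J(Tx)$ with $T(\ker f_i) \subset \ker g_i$ for every $1 \le i \le n$. The goal is then to show that $T$ is injective, which in turn gives bijectivity because $\dim \mathbb{X} = \dim \mathbb{Y} = n$.

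The key step is to read the inclusions $T(\ker f_i) \subset \ker g_i$ through the adjoint $T^* \colon \mathbb{Y}^* \to \mathbb{X}^*$. Indeed, $T(\ker f_i) \subset \ker g_i$ says precisely that $(T^* g_i)(v) = g_i(Tv) = 0$ for every $v \in \ker f_i$, i.e.\ the functional $T^* g_i$ annihilates the hyperplane $\ker f_i$. Since the annihilator of $\ker f_i$ in $\mathbb{X}^*$ is the one-dimensional subspace $\mathrm{span}\{f_i\}$, this forces $T^* g_i = \lambda_i f_i$ for some scalar $\lambda_i$.

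To pin down $\lambda_i$, I would evaluate at $x$. Because $g_i \in J(Tx)$ and $f_i \in J(x)$, one has $(T^* g_i)(x) = g_i(Tx) = \|Tx\|$ and $(\lambda_i f_i)(x) = \lambda_i f_i(x) = \lambda_i \|x\|$; comparing these gives $\lambda_i = \|Tx\|/\|x\|$ for every $i$. This common value is strictly positive precisely because $Tx \neq 0$, so each $\lambda_i \neq 0$. Consequently the image of $T^*$ contains the nonzero multiples $\lambda_1 f_1, \dots, \lambda_n f_n$ of the linearly independent family $f_1, \dots, f_n$, hence contains a basis of $\mathbb{X}^*$. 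Therefore $T^*$ is surjective, which is equivalent to $T$ being injective; and an injective linear map between spaces of equal finite dimension is bijective.

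The only delicate point is the linkage between $f_i$ and $g_i$: the hypothesis produces the $g_i$ only abstractly, and it is the hyperplane--annihilator identification, together with the forced uniform value of $\lambda_i$ (which draws on \emph{both} $f_i \in J(x)$ and $g_i \in J(Tx)$), that welds the two families together and upgrades a bare rank count into full rank. I would emphasize that this route is self-contained and does not require separately invoking the smoothness-preservation result Theorem \ref{theopresmooth}, although that theorem would independently confirm that $Tx$ is $n$-smooth.
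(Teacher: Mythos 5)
Your proof is correct, but it follows a genuinely different route from the paper's. The paper argues by contradiction directly on the kernel of $T$: assuming some $z\neq 0$ lies in $\ker T$, it picks $f_k$ with $z\notin\ker f_k$, writes $\mathbb{X}=\mathrm{span}\{z\}\oplus\ker f_k$, deduces $T(\mathbb{X})\subset\ker g_k$, and contradicts $g_k(Tx)=\|Tx\|\neq 0$. You instead dualize: from $T(\ker f_i)\subset\ker g_i$ you identify $T^*g_i$ as an element of the annihilator $\mathrm{span}\{f_i\}$, and evaluation at $x$ pins down $T^*g_i=\frac{\|Tx\|}{\|x\|}f_i$, so that $T^*$ has full rank and $T$ is injective. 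Both arguments hinge on the same two facts --- the linear independence of the $f_i$ and the nonvanishing $g_i(Tx)=\|Tx\|$ --- but your version extracts strictly more structure (the explicit relation $T^*g_i=\frac{\|Tx\|}{\|x\|}f_i$ with a \emph{common} positive scalar, which in particular re-derives that $Tx$ is at least $n$-smooth without invoking Theorem \ref{theopresmooth}), whereas the paper's version is more elementary, avoiding the adjoint and the annihilator identification entirely. Each step of your argument checks out: the annihilator of the hyperplane $\ker f_i$ is indeed one-dimensional, the scalar $\lambda_i$ is forced to equal $\|Tx\|/\|x\|>0$, and surjectivity of $T^*$ between $n$-dimensional duals is equivalent to injectivity, hence bijectivity, of $T$.
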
 

\begin{proof}
	Let  $T\in\mathbb{L}(\mathbb{X},\mathbb{Y})$ preserves Birkhoff-James orthogonality at $x$ with respect to $\ker f_i,$ for all $1\leq i\leq n.$ Suppose that $Tx\neq 0.$ From Theorem \ref{kerf-subset-kerg}, it follows that there exist $g_1,g_2,\dots,g_n\in J(Tx)$ such that $T(\ker f_i)\subset \ker g_i,$ for all $1\leq i\leq n.$ Since $f_1,f_2,\dots,f_n$ are linearly independent, it follows that $\bigcap\limits_{i=1}^{n}\ker f_i=\{0\}.$ Now, we show that $\ker T=\{0\}.$ Suppose on the contrary that there exists $z(\neq 0)\in \ker T.$ Since $\bigcap\limits_{i=1}^{n}\ker f_i=\{0\},$ it follows that there exists $1\leq k\leq n$ such that $z\notin \ker f_k.$ So $\mathbb{X}=span\{z\}\oplus\ker f_k.$ Let $u\in \mathbb{X}$ be arbitrary. Then $u=\alpha z+h$ for some $\alpha\in\mathbb{R}$ and some $h\in \ker f_k.$ Hence $Tu=Th\in \ker g_k.$ Since $u$ is chosen arbitrarily, it follows that $T(\mathbb{X})\subset \ker g_k.$ This contradicts that $g_k\in J(Tx).$ Thus, $\ker T=\{0\}$ and so $T$ is bijective. This completes the proof.
\end{proof}
The following corollary is an easy consequence of Theorem \ref{th bijective}.
	\begin{cor}\cite[Lemma 2.13]{SMP24}
		Let $ \mathbb{X} $ and $\mathbb{Y} $ be two-dimensional polyhedral  normed linear spaces. If a non-zero operator $T\in\mathbb {L}(\mathbb{X},\mathbb{Y})$ preserves Birkhoff-James orthogonality at any two linearly independent extreme points of $B_\mathbb{X},$ then $T$ is bijective.
\end{cor}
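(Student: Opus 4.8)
The plan is to derive this corollary as a direct specialization of Theorem \ref{th bijective} to the two-dimensional polyhedral setting. First I would identify the two linearly independent extreme points of $B_{\mathbb{X}}$ at which $T$ preserves Birkhoff-James orthogonality; call them $x_1, x_2 \in Ext~B_{\mathbb{X}}$. In a two-dimensional polyhedral normed linear space, the unit ball is a polygon, and every extreme point of $B_{\mathbb{X}}$ is a non-smooth point admitting at least two linearly independent support functionals (the support functionals corresponding to the two edges meeting at that vertex). Thus each $x_i$ is a $2$-smooth point, meaning $\dim span\, J(x_i) = 2 = \dim \mathbb{X}$, so there exist two linearly independent functionals in $J(x_i)$.

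The key observation is that preserving Birkhoff-James orthogonality at the point $x_i$ (in the global local sense of the earlier definitions) entails preserving it with respect to $\ker f$ for every $f \in J(x_i)$, and in particular with respect to $\ker f_1^{(i)}$ and $\ker f_2^{(i)}$ for two linearly independent support functionals at $x_i$. So I would invoke Theorem \ref{th bijective} with $n = 2$: since $x_i$ admits two linearly independent support functionals $f_1^{(i)}, f_2^{(i)} \in J(x_i)$ and $T$ preserves orthogonality at $x_i$ with respect to both $\ker f_1^{(i)}$ and $\ker f_2^{(i)}$, the theorem yields that either $Tx_i = 0$ or $T$ is bijective.

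The remaining point is to rule out the degenerate alternative $Tx_i = 0$ for both extreme points simultaneously, using the hypothesis that $T$ is a \emph{non-zero} operator and that it preserves orthogonality at \emph{two linearly independent} extreme points $x_1, x_2$. If $T$ were not bijective, then by Theorem \ref{th bijective} applied at each $x_i$ we would be forced to have $Tx_1 = 0$ and $Tx_2 = 0$. Since $x_1, x_2$ are linearly independent and $\dim \mathbb{X} = 2$, they form a basis of $\mathbb{X}$, whence $Tx_1 = Tx_2 = 0$ forces $T = 0$ by linearity, contradicting the assumption that $T$ is non-zero. Therefore $T$ must be bijective.

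I expect the main (and essentially the only) obstacle to be the geometric verification that each extreme point of the unit ball of a two-dimensional polyhedral space is genuinely $2$-smooth, i.e. that it admits two linearly independent support functionals; this is where the polyhedral hypothesis is indispensable, since a mere extreme point of a general unit ball could still be smooth. Once this is in hand, the argument is a clean application of Theorem \ref{th bijective} with $n=2$ together with the linear-independence-forces-$T=0$ step, and no further computation is required.
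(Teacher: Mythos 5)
Your proposal is correct and follows essentially the same route as the paper: observe that each extreme point of the unit ball of a two-dimensional polyhedral space is $2$-smooth, apply Theorem \ref{th bijective} with $n=2$ at each of the two linearly independent extreme points to conclude that non-bijectivity forces $Tx_1=Tx_2=0$, and then derive the contradiction $T=0$. The only difference is presentational: you spell out the reduction from ``preserves orthogonality at $x_i$'' to ``preserves orthogonality with respect to $\ker f$ for two linearly independent $f\in J(x_i)$'' and the $2$-smoothness of vertices, which the paper states more tersely.
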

\begin{proof}
	Let $u,v$ be two linearly independent extreme points of $B_\mathbb{X}$ and let us assume that $T\in\mathbb {L}(\mathbb{X},\mathbb{Y})$ preserves Birkhoff-James orthogonality at $u,v.$ Suppose on the contrary that $T$ is not bijective. Since $ \mathbb{X} $ is a two-dimensional polyhedral  normed linear space, $u$ and $v$ are $2$-smooth points of $\mathbb X.$ Therefore, it follows from Theorem \ref{th bijective} that $Tu=Tv=0,$ which implies that $ T $ is the zero operator, a contradiction. Thus, $T$ is bijective.
\end{proof}
We would like to mention that if $\mathbb{X}$ is an $n$-dimensional  normed linear space and if a non-zero $x\in \mathbb{X}$ is such that there exist $p$ linearly independent elements $f_1,f_2,\dots,f_p \in J(x),$ for some $p<n$ and if $T\in\mathbb{L}(\mathbb{X})$ preserves Birkhoff-James orthogonality at $x$ with respect to $\ker f_i,$ for all $1\leq i\leq p$ and  $Tx\neq0,$ then $T$ may not be bijective. We present the following example to establish this claim.

\begin{example}
	Let $\mathbb{X}=\ell_{\infty}^{3}.$ Consider $T\in \mathbb{L}(\mathbb{X})$ given by \[T(\alpha,\beta,\gamma)=(\alpha,\beta,\beta), \text{ for all }(\alpha,\beta,\gamma)\in \mathbb{X}.\]
	Let $x=(1,1,1).$ Then $Tx=(1,1,1).$ Consider $f_1,f_2 \in S_{\mathbb{X}^*}$ given by
		\[f_1(\alpha,\beta,\gamma)=\alpha,~f_2(\alpha,\beta,\gamma)=\beta.\]
	It is easy to observe that $f_1,f_2 \in J(x).$
	Clearly, $\ker f_1= span\{(0,1,0),(0,0,1)\}.$ Let $y\in \ker f_1$ be arbitrary, then $y=(0,b,c)$ for some $b,c\in \mathbb{R}.$ Now, $Ty=(0,b,b)$ and so $Ty\in (Tx)^{\perp_B}.$ Since $y\in \ker f_1$ is chosen arbitrarily , it follows that $T(\ker f_1)\subset(Tx)^{\perp_B} .$ Similarly, we can show that $T(\ker f_2)\subset(Tx)^{\perp_B}.$ Therefore, $T$ preserves Birkhoff-James orthogonality at $x$ with respect to $\ker f_1$ and $\ker f_2$ respectively. However, it is trivially true that $T$ is not bijective.
\end{example}

We next characterize the isometries on a finite-dimensional polyhedral  normed linear space.
\begin{theorem}\label{isometry}
	Let $ \mathbb{X} $ be an $n$-dimensional polyhedral  normed linear space and let $T\in \mathbb{L}(\mathbb{X})$ be of norm one. Then $T$ is an isometry if and only if $T$ satisfies the following conditions: 
	\begin{itemize}
		\item[(i)]  $\| Tu\|=\|Tv\|$ for any $u,v \in Ext~B_\mathbb{X}.$
		\item[(ii)]For each $u\in Ext~B_\mathbb{X},$ there exist $n$ linearly independent elements $f_1,f_2,\dots,f_n \in J(u)$  such that $T\in\mathbb{L}(\mathbb{X})$ preserves Birkhoff-James orthogonality at $u$ with respect to $\ker f_i,$ for all $1\leq i\leq n.$
	\end{itemize} 
\end{theorem}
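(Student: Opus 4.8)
The plan is to prove both implications, concentrating almost all the effort on sufficiency, since necessity is short.

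\emph{Necessity.} Suppose first that $T$ is an isometry. Then $\|Tu\|=\|u\|=1$ for every $u\in Ext~B_\mathbb{X}$, which immediately gives (i). For (ii), I would use that each extreme point $u$ of the polytope $B_\mathbb{X}$ is a vertex, so its normal cone is $n$-dimensional and hence $dim~span~J(u)=n$; that is, $u$ is $n$-smooth and admits $n$ linearly independent support functionals $f_1,\dots,f_n\in J(u)$. Since an isometry preserves Birkhoff-James orthogonality globally (if $u\perp_B y$ then $\|Tu+\lambda Ty\|=\|u+\lambda y\|\geq\|u\|=\|Tu\|$ for all $\lambda$), it preserves orthogonality at $u$ with respect to each $\ker f_i$, establishing (ii).

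\emph{Sufficiency.} Assume (i) and (ii). The first step is to show that $T$ is bijective. Fixing any $u\in Ext~B_\mathbb{X}$ and applying Theorem \ref{th bijective} to the $n$ independent functionals furnished by (ii), either $Tu=0$ or $T$ is bijective. If $Tu=0$ for some extreme point, then (i) forces $Tv=0$ for all $v\in Ext~B_\mathbb{X}$; as $Ext~B_\mathbb{X}$ spans $\mathbb{X}$, this would give $T=0$, contradicting $\|T\|=1$. Hence $Tu\neq 0$ and $T$ is bijective. Next I would pin down the common value of $\|Tu\|$: by (i) there is a constant $c$ with $\|Tu\|=c$ on $Ext~B_\mathbb{X}$, and writing an arbitrary $x\in B_\mathbb{X}$ as a convex combination of extreme points yields $\|Tx\|\leq c$, so $1=\|T\|\leq c$; combined with $c\leq\|T\|=1$ this gives $c=1$, i.e.\ $\|Tu\|=1$ for every extreme point.

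The decisive step is to upgrade "$Tu$ has norm one" to "$Tu$ is an extreme point." Here I would invoke Theorem \ref{theopresmooth}: since $u$ is $n$-smooth and $T$ preserves orthogonality at $u$ with respect to the $n$ independent kernels $\ker f_i$, the image $Tu$ is $p$-smooth for some $p\geq n$, and because $dim~\mathbb{X}=n$ this forces $Tu$ to be $n$-smooth. I would then observe that a non-extreme point $y\in S_\mathbb{X}$ can never be $n$-smooth: if $y$ is a proper convex combination of two distinct unit vectors, then every $f\in J(y)$ must annihilate their (nonzero) difference, so $span~J(y)$ is contained in a hyperspace and $dim~span~J(y)\leq n-1$. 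Consequently $Tu\in Ext~B_\mathbb{X}$ for each $u\in Ext~B_\mathbb{X}$.

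Finally, $T$ maps the \emph{finite} set $Ext~B_\mathbb{X}$ injectively into itself (injectivity coming from bijectivity of $T$), hence bijectively onto itself; therefore $T(B_\mathbb{X})=T(co(Ext~B_\mathbb{X}))=co(Ext~B_\mathbb{X})=B_\mathbb{X}$, and a bijective linear map taking the unit ball onto itself is an isometry. I expect the main obstacle to be the clean identification of $n$-smooth unit vectors with extreme points of $B_\mathbb{X}$; once that characterization is secured, the bijectivity result (Theorem \ref{th bijective}) and the smoothness-preservation result (Theorem \ref{theopresmooth}) combine exactly as needed to force $T$ to permute the vertices of the unit ball.
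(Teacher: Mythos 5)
Your proof is correct and follows essentially the same route as the paper: bijectivity from Theorem \ref{th bijective}, preservation of extreme points from Theorem \ref{theopresmooth} together with the identification of $n$-smooth unit vectors with vertices of the polyhedral ball, and then the conclusion. The only (cosmetic) difference is at the end, where you argue via $T(B_{\mathbb{X}})=co(T(Ext~B_{\mathbb{X}}))=B_{\mathbb{X}}$ while the paper argues that both $T$ and $T^{-1}$ attain their norms at extreme points and hence $\|T\|=\|T^{-1}\|=1$; you also spell out some details the paper leaves implicit (the $Tu=0$ case and why a non-extreme unit vector cannot be $n$-smooth).
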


\begin{proof}
	As the necessary part of the theorem follows trivially, we only prove the sufficient part.
	Suppose that for each $u\in Ext~B_\mathbb{X},$ there exist $n$ linearly independent elements $f_1,f_2,\dots,f_n \in J(u)$  such that $T\in\mathbb{L}(\mathbb{X})$ preserves Birkhoff-James orthogonality at $x$ with respect to $\ker f_i,$ for all $1\leq i\leq n$ and  $\| Tu\|=\|Tv\|$ for any $u,v \in Ext~B_\mathbb{X}.$ From Theorem \ref{th bijective}, we conclude that $T$ is bijective. From Theorem \ref{theopresmooth}, it follows that the image of each extreme point of $B_\mathbb{X} $ is a scalar multiple of some extreme point of $B_\mathbb{X}.$ Since $\mathbb{X}$ is finite-dimensional, $T$ must attains its norm at an  extreme point of $B_\mathbb{X}.$ Also,  we have  $\| Tu\|=\|Tv\|$ for any two extreme points $u,v$ of  $B_\mathbb{X}.$  Thus,  $\| Tu\|=\|T\|=1$ for all extreme points $u$ of $B_\mathbb{X}.$  Since $T$ is bijective, $T^{-1}$ exists and maps each extreme point of $B_\mathbb{X} $ to some extreme point of $B_\mathbb{X}.$ Now, $T^{-1}$ also attains norm at some extreme point of $B_\mathbb{X}$ and so $\|T^{-1}\|=1.$ 
	Therefore, for all $x\in\mathbb{X},$  $\|Tx\|\leq \|x\|=\|T^{-1}(Tx)\|\leq \|Tx\|.$ This implies $\|Tx\|=\|x\|,$ for all  $x\in\mathbb{X}.$  Thus, $T$ is an isometry. 
\end{proof}
In  \cite[Th. 2.19, Th. 2.22]{SMP24},  proper refinements of the Blanco-Koldobsky-Turn\v{s}ek Theorem were obtained for  $ \mathbb{X}=\ell_{\infty}^n,~\ell_{1}^n. $ In fact, in \cite[Th. 2.19, Th. 2.22]{SMP24}, it was shown that if a linear operator on $\mathbb{X}$ preserves Birkhoff-James orthogonality at each extreme point of $B_{\mathbb{X}},$  then T is a scalar multiple of an isometry.  In  \cite[Lemma 2.18, Lemma 2.21]{SMP24}, it was shown that if $T\in \mathbb {L}(\mathbb{X})$ maps  each extreme point of $B_{\mathbb{X}}$ to scalar multiples of some extreme point of $B_{\mathbb{X}},$  then $\| Tu\|=\|Tv\|$ for any $u,v \in Ext~B_\mathbb{X}.$  Therefore, from Theorem \ref{isometry}, we get  further refinement of  the Blanco-Koldobsky-Turn\v{s}ek Theorem in  $ \mathbb{X}=\ell_{\infty}^n,~\ell_{1}^n. $ 
\begin{theorem}
	Let $\mathbb{X}=\ell_{\infty}^n(or,~\ell_1^n).$ If $T\in \mathbb {L}(\mathbb{X})$ preserves Birkhoff-James orthogonality at each extreme point  $x\in B_{\mathbb{X}}$ with respect to  $\ker f,$ for any $n$ linearly independent  $f\in J(x),$ then $T$ is a scalar multiple of an isometry.
	\end{theorem}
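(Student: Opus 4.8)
The plan is to deduce this statement from Theorem \ref{isometry} after an appropriate rescaling of $T$. The structural fact I would establish first is that, for $\mathbb{X}=\ell_\infty^n$ or $\mathbb{X}=\ell_1^n$, the $n$-smooth points are \emph{precisely} the nonzero scalar multiples of the extreme points of $B_\mathbb{X}$; in particular every extreme point of $B_\mathbb{X}$ is $n$-smooth. This is a direct computation. For $x\in\ell_\infty^n$ with $M=\{i:|x_i|=\|x\|_\infty\}$ one checks that $J(x)$ is supported on the coordinates in $M$, so $dim\, span\, J(x)=|M|$, and $n$-smoothness forces $|M|=n$, i.e. all coordinates of equal modulus. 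For $x\in\ell_1^n$ with support of size $k\ge 1$ one finds $dim\, span\, J(x)=n-k+1$, so $n$-smoothness forces $k=1$. In both cases the $n$-smooth points are exactly the nonzero multiples of some extreme point.

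Assuming $T\neq 0$ (the zero operator being degenerate), I would next show that $T$ is bijective. Since $Ext~B_\mathbb{X}$ spans $\mathbb{X}$, there is an extreme point $x_0$ with $Tx_0\neq 0$. As $x_0$ is $n$-smooth, one may choose $n$ linearly independent $f_1,\dots,f_n\in J(x_0)$, and by hypothesis $T$ preserves Birkhoff-James orthogonality at $x_0$ with respect to each $\ker f_i$. Theorem \ref{th bijective} then gives that $T$ is bijective.

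With bijectivity in hand, I would invoke Theorem \ref{theopresmooth}: for each extreme point $u$, which is $n$-smooth, the preservation hypothesis forces $Tu$ to be $p$-smooth for some $p\ge n$; since the order of smoothness in an $n$-dimensional space cannot exceed $n$, the point $Tu$ is exactly $n$-smooth, hence by the first step a nonzero scalar multiple of some extreme point of $B_\mathbb{X}$. Thus $T$ maps each extreme point to a scalar multiple of an extreme point, and \cite[Lemma 2.18, Lemma 2.21]{SMP24} yields $\|Tu\|=\|Tv\|=:c$ for all $u,v\in Ext~B_\mathbb{X}$, with $c>0$ since $T\neq 0$.

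Finally I would normalize. In finite dimensions $\|Tx\|$ is a convex function on the compact convex set $B_\mathbb{X}$, so $\|T\|$ is attained at an extreme point, giving $\|T\|=c$. Setting $S=\tfrac{1}{c}T$ we obtain $\|S\|=1$, $\|Su\|=1$ for every $u\in Ext~B_\mathbb{X}$, and $S$ preserves Birkhoff-James orthogonality at each extreme point with respect to the same kernels as $T$ (orthogonality being scale-invariant). Hence $S$ satisfies both conditions of Theorem \ref{isometry} and is an isometry, so $T=cS$ is a scalar multiple of an isometry. The main obstacle is the first step---verifying that $n$-smoothness is equivalent to being a nonzero scalar multiple of an extreme point in these two specific spaces---since the remainder is a clean concatenation of Theorems \ref{th bijective}, \ref{theopresmooth} and \ref{isometry} together with the cited lemmas; one must also take slight care that the normalizing constant $c$ is positive and equals $\|T\|$.
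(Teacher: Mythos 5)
Your proposal is correct and follows essentially the same route as the paper, which derives the theorem from Theorem \ref{isometry} together with Theorem \ref{theopresmooth}, Theorem \ref{th bijective}, and \cite[Lemma 2.18, Lemma 2.21]{SMP24}; the paper leaves the details (in particular the identification of $n$-smooth points with scalar multiples of extreme points in $\ell_\infty^n$ and $\ell_1^n$, and the normalization) implicit, and you have filled them in accurately.
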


The next result characterizes  $\ell_2^n$ out of $\ell_p^n$ spaces using the notion of  $\mathcal K$-set. Recall that a set $A\subset S_{\mathbb{X}}$ is said to be a $\mathcal K$-set \cite{SMP24}   if any operator $T\in \mathbb{L}(\mathbb{X})$ that preserves Birkhoff-James orthogonality at each point of $A,$  is necessarily a scalar multiple of an isometry.

\begin{theorem}
	Let $\mathbb{X}=\ell_p^n, 1 < p < \infty $.  Then the followings are equivalent.
	\begin{itemize}
		\item [(I)]  $\mathbb{X}=\ell_2^n$. 
	\item [(II)] A nonempty set $A\subset S_{\mathbb{X}}$ is a minimal $\mathcal K$-set if and only if $A$  satisfies the  following two conditions:
	\begin{itemize}
		\item[(i)] $A=\{x_1,x_2,\dots,x_n\},$ where $x_1,x_2,\dots,x_n$ are linearly independent.
		\item[(ii)] If $A=A_1\cup A_2,$ where $A_1\neq\phi, $ $A_2\neq\phi,$ then $A_1 \not \perp_B A_2.$
	\end{itemize}
		\end{itemize} 
\end{theorem}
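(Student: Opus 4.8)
The two directions of this biconditional should be handled by genuinely different mechanisms, which is itself the point of the statement: the Euclidean case is the one in which preservation of orthogonality is governed by the spectral theorem. The plan is to first establish, for $\mathbb{X}=\ell_2^n$, the operational criterion that $T$ preserves Birkhoff--James orthogonality at a point $x$ if and only if $x$ is an eigenvector of the self-adjoint operator $S=T^*T$. Indeed, since $x^{\perp_B}=\{x\}^{\perp}$ is a hyperplane, the condition $Tx\perp_B Ty$ for all $y\perp x$ reads $\langle T^*Tx,y\rangle=0$ on $\{x\}^{\perp}$, i.e. $T^*Tx\in\operatorname{span}\{x\}$. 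Consequently $T$ preserves orthogonality at every point of $A$ exactly when each element of $A$ is an eigenvector of the symmetric operator $S$. From here I would extract the combinatorial description: $A$ is a $\mathcal K$-set if and only if $A$ spans $\mathbb{X}$ and its \emph{non-orthogonality graph} $G(A)$ (vertices $A$, with $a\sim b$ iff $a\not\perp_B b$) is connected. Note condition (ii) is precisely the connectedness of $G(A)$, since a bipartition $A=A_1\cup A_2$ with $A_1\perp_B A_2$ is exactly a disconnection of $G(A)$.

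For the criterion itself: if $\operatorname{span}(A)=W\subsetneq\mathbb{X}$, then $S=\lambda P_W+\mu P_{W^{\perp}}$ with $\lambda\ne\mu$ has every point of $A\subset W$ as an eigenvector, so it preserves orthogonality at each point of $A$ yet is not a scalar multiple of an isometry; if instead $G(A)$ disconnects as $A_1\perp_B A_2$, then $S=\lambda P_{\operatorname{span}A_1}+\mu P_{(\operatorname{span}A_1)^{\perp}}$ does the same. This shows both spanning and (ii) are necessary for a $\mathcal K$-set. Conversely, if $A$ spans, $G(A)$ is connected, and every $a\in A$ is an eigenvector of symmetric $S$, the distinct eigenspaces of $S$ are mutually orthogonal, so grouping $A$ by eigenspace produces an orthogonal bipartition unless a single eigenspace occurs; connectedness forces $S=\lambda I$, whence $T=\sqrt{\lambda}\,U$ with $U$ orthogonal. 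Granting this criterion, a basis satisfying (ii) is a $\mathcal K$-set whose every proper subset fails to span, hence is a minimal $\mathcal K$-set meeting (i) and (ii). The reverse inclusion is the heart of the Euclidean direction: a minimal $\mathcal K$-set automatically satisfies (ii), and it spans, so it only remains to prove it is linearly independent. \textbf{This is the main obstacle.} It is equivalent to the lemma that every spanning set with connected non-orthogonality graph contains a basis whose non-orthogonality graph is still connected. I would prove it by a deletion-and-induction argument: the leaves of a spanning tree of $G(A)$ are non-cut vertices (their removal keeps $G(A)$ connected), while the members of any matroid circuit are non-coloops (their removal keeps $A$ spanning), so it suffices to exhibit a vertex that is simultaneously non-cut and non-coloop. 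The delicate point is to exclude the configuration in which every non-cut vertex is a coloop; here I would use that the coloops form an independent set of size $\le n$ and that a circuit re-connects the graph through its remaining members, so some circuit must contain a non-cut vertex, which can then be deleted.

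For the converse implication (II)$\Rightarrow$(I) I would argue the contrapositive: assuming $1<p<\infty$ with $p\ne2$, I produce a basis $A=\{x_1,\dots,x_n\}$ of full-support unit vectors satisfying (i) and (ii) (e.g. generic vectors with no orthogonal pair, so $G(A)$ is complete) that is \emph{not} a $\mathcal K$-set, contradicting the ``if'' half of (II). The mechanism is a dimension count resting on the classical Banach--Lamperti description of the isometries of $\ell_p^n$, $p\ne2$, as the finite group of signed permutations; hence the set $\mathcal I=\{\lambda U:\lambda\in\mathbb R,\ U\text{ an isometry}\}$ of scalar multiples of isometries is one-dimensional, and near the identity it is just $\{\lambda I\}$. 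On the other hand, every point of $\ell_p^n$ being smooth, preserving orthogonality at $x_i$ is the single incidence $T^*f_{Tx_i}\in\operatorname{span}\{f_{x_i}\}$, i.e. at most $n-1$ real-analytic equations on $T\in\mathbb{L}(\mathbb{X})\cong\mathbb R^{n^2}$; the $n$ points therefore cut out a set $\mathcal S\ni I$ whose local dimension at $I$ is at least $n^2-n(n-1)=n\ge2$, the defining functions being real-analytic near $I$ because full support of the $x_i$ keeps the coordinates of $Tx_i$ nonzero. Since $\dim_I\mathcal S\ge2>1=\dim_I\mathcal I$, there is an orthogonality preserver at $A$ arbitrarily close to $I$ that is not a scalar multiple of an isometry, so $A$ is not a $\mathcal K$-set. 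The Euclidean case escapes this only because there $\dim\mathcal I=1+\dim O(n)=1+\binom{n}{2}$ absorbs precisely the extra directions, which is exactly what singles out $p=2$. The one point requiring care is the real-analytic codimension estimate, which I would justify by the standard fact that the common zero set of $k$ real-analytic functions has local dimension at least $N-k$ at each of its points.
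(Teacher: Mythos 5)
Your implication (II)$\Rightarrow$(I) --- the part of the statement that is actually new here --- rests on a false principle. You reduce everything to the claim that the common zero set of $k$ real-analytic functions on $\mathbb{R}^N$ has local dimension at least $N-k$ at each of its points. That is not true over the reals: the single real-analytic (indeed polynomial) function $f(x,y)=x^{2}+y^{2}$ vanishes exactly at the origin of $\mathbb{R}^{2}$, a set of local dimension $0$, not $1$. The codimension bound you invoke is a theorem about \emph{complex} analytic sets; over $\mathbb{R}$, sums of squares defeat it. Consequently your estimate $\dim_{I}\mathcal{S}\geq n^{2}-n(n-1)=n\geq 2$ is unjustified, and with it the existence of an orthogonality preserver near $I$ that is not a scalar multiple of an isometry. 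The linearization of your $n(n-1)$ conditions at $I$ does have solution space of dimension at least $n$, but --- exactly as in the $x^{2}+y^{2}$ example --- a large tangent space need not produce actual nearby solutions. To repair the argument you would have to show the defining map is a submersion at $I$ (or at some other point of $\mathcal{S}$), which is a genuine computation depending on $p$ and on the chosen $A$, i.e.\ precisely the work the soft argument was meant to avoid. The paper sidesteps all of this by exhibiting $T$ and $A$ explicitly: for $n=2$, $T(x,y)=(x+y,x-y)$ with $A=\bigl\{(1,0),\tfrac{1}{\|(1,1)\|}(1,1)\bigr\}$; for $n>2$, the block matrix with blocks $S=\begin{bmatrix}1&1\\1&-1\end{bmatrix}$ and $2^{1/q}I_{n-2}$, with $A=\bigl\{\tfrac{u}{\|u\|},e_{2},\dots,e_{n}\bigr\}$, $u=(1,\dots,1)$; preservation at each point of $A$ is then verified directly through the semi-inner product $[\cdot,\cdot]_{p}$.

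On (I)$\Rightarrow$(II) the paper simply cites \cite[Th. 2.4]{SMP24}, whereas you re-derive it from the eigenvector criterion for $T^{*}T$ together with a matroid/graph deletion lemma. The eigenvector reduction and the translation of condition (ii) into connectedness of the non-orthogonality graph are correct, but your proof of the deletion lemma (every spanning set with connected non-orthogonality graph and more than $n$ elements admits an element whose removal preserves both properties) is only a sketch: the exclusion of the configuration in which every non-cut vertex is a coloop is asserted, not proved. Since the paper outsources this direction to the cited result, that lemma is not strictly your burden, but as written it is not yet a proof either.
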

	\begin{proof}
	(I) implies (II) follows from \cite[Th. 2.4]{SMP24}. We prove that (II) implies (I).
		This is equivalent  to proving that  for each $p\neq 2,$ there exists an operator $T\in \mathbb{L}(\mathbb{X})$ such that  $T$ preserves Birkhoff-James orthogonality at some $A \subset S_{\mathbb{X}},$ satisfying (i) and (ii), but $T$ is not  a scalar multiple of an isometry. Note that for $u=(u_1,u_2,\dots,u_n),v=(v_1,v_2,\dots,v_n)\in \mathbb{X},$ the unique semi inner product defined as follows:
	\[[u,v]_p =  \begin{cases}
		\frac{\sum_{i=1}^{n} u_iv_i|v_i|^{p-2}}{\|v\|_{p}^{p-2}},&~v\neq0\\
		0,&~v=0.
	\end{cases}\]
	From \cite[Th. 2]{G67}, it follows that for $u,v\in \mathbb{X},$ $u\perp_B v$ if and only if $[v,u]_p=0.$\\
	Let $p\neq 2.$ We consider the following two cases separately:
	
	\smallskip
	\textbf{Case 1}: Suppose that $n=2.$  Consider $T\in \mathbb{L}(\mathbb{X})$ defined by
	\[T(x,y)=(x+y,x-y),\text{ for all }(x,y)\in \mathbb{X}.\]
	It is easy to verify that  $T$ is not a scalar multiple of an isometry unless $p=2.$  Let $ A = \big\{u,\frac{v}{\|v\|}\big\},$ where $u=(1,0)$ and $v=(1,1).$
	We claim  that $T$ preserves orthogonality on $A.$
	Clearly $u^{\perp_B}=\{(0,y):y\in\mathbb{R}\}$ and $v^{\perp_B}=\{(z,-z):z\in\mathbb{R}\}.$  Then $[T(0,y), T(1,0)]_p=[(y,-y), (1,1)]_p=0,$ for all $y\in \mathbb{R}$ and $[T(z,-z), T(1,1)]_p=[(0,2z), (2,0)]_p=0,$ for all $z\in \mathbb{R}.$
	Hence $T$ preserves Birkhoff-James orthogonality on $A.$ 
Thus,  $A \subset S_{\mathbb{X}}$ satisfies (i) and (ii) and $T$ preserves Birkhoff-James orthogonality at each point of $A$ but $T$ is not a scalar multiple of an isometry.

	\smallskip
	\textbf{Case 2}: Suppose that $n > 2.$ For each $p,$ consider $T\in \mathbb{L}(\mathbb{X}),$ whose matrix representation with respect to the standard ordered basis of $ \mathbb{R}^n $ is given by
	\[\begin{bmatrix} 
		S &0\\
		0 &D
		
	\end{bmatrix},
\text{	where }S=\begin{bmatrix} 
		1 & 1 \\
		1 & -1
	\end{bmatrix},~ D =2^{\frac{1}{q}}I_{n-2},~\frac{1}{p}+\frac{1}{q}=1.\]
	Clearly, $T$ is not a scalar multiple of an isometry as $p\neq 2.$ For each $i\in\{2,\ldots, n\},$ let $e_i=(0,\dots,0,1,0,\ldots, 0)$, where all the coordinates of $e_i$ are zero except the $i$-th coordinate, which is $1$.  Let $A = \Big\{\frac{u}{\|u\|}, e_2,e_3, \ldots, e_n \Big\}, $ where $u=(1,1,\ldots, 1).$
	We claim that $T$ preserves Birkhoff-James orthogonality on $A.$  Observe that  $e_i^{\perp_B}=\{(y_1,y_2,\dots,y_n)\in \mathbb{X}:y_i=0\},$  for each $ 2 \leq i \leq n.$ 
	For  $z=(z_1,0,z_3,\dots, z_n)\in (e_2)^{\perp_B},$ 
	\[[Tz, Te_2]_p=[(z_1,z_1, 2^{\frac{1}{q}}z_3\ldots, 2^{\frac{1}{q}}z_n),  (1,-1, 0, \ldots, 0)]_p=0.\] This shows that $T$ preserves Birkhoff-James orthogonality at $e_2.$ \\
	Next, let $3\leq i\leq n.$ Then for  $w=(w_1,w_2,\dots, w_{i-1},0,w_{i+1},\ldots, w_n)\in (e_i)^{\perp_B},$ 
	\begin{eqnarray*}
		&&[Tw, Te_i]_p\\
		&=&\scalebox{.95}{$[(w_1+w_2,w_1-w_2,2^{\frac{1}{q}}w_3,\dots,2^{\frac{1}{q}}w_{i-1},0,2^{\frac{1}{q}}w_{i+1},\dots, 2^{\frac{1}{q}}w_n), (0,\dots,0,2^{\frac{1}{q}},0 , \ldots, 0)]_p$}\\
		&=&0.
	\end{eqnarray*}
	This shows that $T$ preserves Birkhoff-James orthogonality at $e_i, $ for $3 \leq i \leq n.$ Therefore,  $T$ preserves Birkhoff-James orthogonality at $e_i,$ for $2 \leq i \leq n.$ 
	Finally, we show that $T$  preserves Birkhoff-James orthogonality at $u.$
	Now, $u^{\perp_B}=\{(v_1,v_2,\dots,v_n)\in \mathbb{X}:v_1+v_2+\dots+v_n=0\}.$
	Then for  $v=(v_1,v_2,\dots,v_n)\in (u)^{\perp_B},$ 
	\begin{eqnarray*}
		[Tv,Tu]_p&=&[(v_1+v_2,v_1-v_2,2^{\frac{1}{q}}v_3\dots, 2^{\frac{1}{q}}v_n),(2,0,2^{\frac{1}{q}}\ldots, 2^{\frac{1}{q}})]_p\\
		&=&\frac{(v_1+v_2)2^{p-1}+2^{\frac{1}{q}}v_32^{\frac{p-1}{q}}+2^{\frac{1}{q}}v_42^{\frac{p-1}{q}}+\dots+ 2^{\frac{1}{q}}v_n2^{\frac{p-1}{q}}}{\|Tu\|^{p-2}}\\
		&=&\frac{(v_1+v_2)2^{\frac{p}{q}}+v_32^{\frac{p}{q}}+v_42^{\frac{p}{q}}+\dots+ v_n2^{\frac{p}{q}}}{\|Tu\|^{p-2}} \\ 
		&=&\frac{(v_1+v_2+\dots+ v_n)2^{\frac{p}{q}}}{\|Tu\|^{p-2}}\\
		&=& 0.
	\end{eqnarray*}
Thus,  $T$ preserves Birkhoff-James orthogonality at each point of $A$ but $T$ is not a scalar multiple of an isometry. This completes the proof of the theorem.
	\end{proof}

We end this article with the following conjecture.
\begin{conjecture}
	Let $\mathbb{X}$ be an $n$-dimensional  normed linear space. Then the following two conditions are equivalent. 
		\begin{itemize}
		\item [(I)]  $\mathbb{X} $ is a Hilbert space. 
		\item [(II)] A nonempty set $A\subset S_{\mathbb{X}}$ is a minimal $\mathcal K$-set if and only if $A$  satisfies the  following two conditions:
		\begin{itemize}
			\item[(i)] $A=\{x_1,x_2,\dots,x_n\},$ where $x_1,x_2,\dots,x_n$ are linearly independent.
			\item[(ii)] If $A=A_1\cup A_2,$ where $A_1\neq\phi, $ $A_2\neq\phi,$ then $A_1 \not \perp_B A_2.$
		\end{itemize}
	\end{itemize} 
	\end{conjecture}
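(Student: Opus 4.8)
The plan is to treat the two implications asymmetrically. The forward implication $(\mathrm{I})\Rightarrow(\mathrm{II})$ is exactly \cite[Th. 2.4]{SMP24}: for a finite-dimensional Hilbert space the minimal $\mathcal{K}$-sets are precisely the linearly independent, non-splittable $n$-element subsets of $S_{\mathbb{X}}$, and since every $n$-dimensional Hilbert space is isometric to $\ell_2^n$, this needs no new input. The substance of the conjecture is $(\mathrm{II})\Rightarrow(\mathrm{I})$, which I would prove by contraposition: assuming $\mathbb{X}$ is not a Hilbert space, I would exhibit a set $A\subset S_{\mathbb{X}}$ that falsifies the biconditional in $(\mathrm{II})$. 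The most promising target is the ``if'' direction, namely to construct a set $A=\{x_1,\dots,x_n\}$ of linearly independent unit vectors that is non-splittable --- so that it satisfies conditions (i) and (ii) --- but that is not a $\mathcal{K}$-set at all, hence a fortiori not a minimal one. Concretely, this amounts to producing an operator $T\in\mathbb{L}(\mathbb{X})$ that preserves Birkhoff-James orthogonality at each $x_i$ yet is not a scalar multiple of an isometry, in the same spirit as the $\ell_p^n$ counterexamples already constructed.

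The first step is a dual reformulation of local preservation, which is where the machinery of the present paper enters. For a smooth unit vector $x$, write $f_x$ for its unique support functional, so that $x^{\perp_B}=\ker f_x$; by Theorem \ref{kerf-subset-kerg}(i), an operator $T$ with $Tx\neq0$ preserves orthogonality at $x$ if and only if there exists $g\in J(Tx)$ with $T(\ker f_x)\subset\ker g$, equivalently $T^{*}g=c\,f_x$ for some scalar $c$. Thus a set $A=\{x_1,\dots,x_n\}$ of smooth points fails to be a $\mathcal{K}$-set as soon as some non-scalar-isometry $T$ satisfies this relation at every $x_i$. In a Hilbert space the Riesz identification turns the relation into ``$x_i$ is an eigenvector of the positive self-adjoint operator $T^{*}T$''; non-splittability of $A$ then forbids partitioning the $x_i$ into two mutually orthogonal groups, which --- since eigenvectors of $T^{*}T$ for distinct eigenvalues are orthogonal --- forces all eigenvalues to coincide and hence $T^{*}T=\lambda\,\mathrm{Id}$. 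This self-adjoint rigidity is precisely why non-splittable bases are $\mathcal{K}$-sets in the Euclidean setting, and the heart of the converse is to show that the rigidity genuinely breaks down for every non-Euclidean norm.

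To locate that breakdown I would reduce to a two-dimensional non-Euclidean section. For $n\ge 3$, a classical theorem (see \cite{Book24}) identifies Hilbert spaces among spaces of dimension at least three as exactly those in which Birkhoff-James orthogonality is symmetric; hence a non-Hilbert $\mathbb{X}$ contains a plane $\mathbb{M}=\mathrm{span}\{x_1,x_2\}$ carrying a witnessed asymmetry $x_1\perp_B y$, $y\not\perp_B x_1$. On such a plane the duality map $x\mapsto f_x$ is no longer self-dual, and I would use this to build a linear $S$ on $\mathbb{M}$, not a scalar multiple of an isometry, that nevertheless satisfies the relation $S^{*}(\cdot)\parallel f_{x_i}$ at two non-orthogonal directions. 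Following the template of the $\ell_p^n$ argument, $S$ would then be promoted to $T\in\mathbb{L}(\mathbb{X})$ as a block operator $S\oplus\mu\,\mathrm{Id}$ relative to a complementary span of Auerbach basis vectors $x_3,\dots,x_n$, with the test set $A=\{x_1,\dots,x_n\}$ arranged (as in Case~$2$ of the $\ell_p^n$ theorem, via a connecting vector) to be linearly independent and non-splittable while $T$ stays off the scalar-isometry locus. For non-smooth (polyhedral) spaces the single-functional description is replaced by the associated-cone picture of Theorem \ref{theocharBJ} and part (ii) of Theorem \ref{kerf-subset-kerg}, and one would additionally need to rule out the complementary failure mode --- a minimal $\mathcal{K}$-set violating (i) or (ii) --- should the ``if'' direction resist.

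The principal obstacle, and the reason this remains a conjecture, is the construction of the non-isometric local preserver $S$ for an \emph{arbitrary} non-Euclidean norm. In the $\ell_p^n$ case one has the explicit semi-inner product $[\cdot,\cdot]_p$ and can certify preservation by direct computation; for a general norm no closed form is available, the defining relation is self-referential because the target functional $g\in J(Tx_i)$ itself depends on the unknown $T$, and one must simultaneously secure non-splittability of $A$ and non-triviality of $T$ together with a compatible direct-sum decomposition $\mathbb{M}\oplus\mathbb{M}'$ (the Euclidean orthogonal complement being unavailable). A further genuinely delicate point is the case $n=2$: the symmetry characterization fails to force Euclidean geometry there, since Radon planes are symmetric yet non-Euclidean, so either the conjecture must be read for $n\ge 3$ or the planar case demands a separate argument deriving the failure of $(\mathrm{II})$ from the finer structure of $J(x)$ at the extreme points of $B_{\mathbb{X}}$.
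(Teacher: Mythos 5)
This statement is the closing \emph{conjecture} of the paper: the authors explicitly end the article with it and supply no proof, so there is no paper argument to compare yours against, and any complete proof would be new mathematics rather than a reconstruction. Judged on its own terms, your proposal is a reasonable research plan but not a proof, and you concede as much. The half you do settle, $(\mathrm{I})\Rightarrow(\mathrm{II})$, is correct and is exactly how the paper handles the analogous implication in its $\ell_p^n$ theorem: every $n$-dimensional Hilbert space is isometric to $\ell_2^n$, all notions involved ($\mathcal{K}$-sets, Birkhoff--James orthogonality, minimality) are isometry-invariant, and \cite[Th.~2.4]{SMP24} gives the characterization of minimal $\mathcal{K}$-sets there. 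Your dual reformulation via Theorem \ref{kerf-subset-kerg}(i) (that preservation at a smooth point $x$ with $Tx\neq 0$ amounts to $T^{*}g=c\,f_x$ for some $g\in J(Tx)$) and the resulting eigenvector rigidity of $T^{*}T$ in the Hilbert case are also sound.

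The genuine gap is the entirety of $(\mathrm{II})\Rightarrow(\mathrm{I})$, where nothing is actually constructed. The operator $S$ on the non-Euclidean plane $\mathbb{M}$ --- a non-scalar-isometry satisfying the support-functional relation at two suitable directions --- is postulated, not produced, and the defining condition is self-referential since $g\in J(Tx_i)$ depends on the unknown $T$; in the $\ell_p^n$ case the paper escapes this only because the explicit semi-inner product $[\cdot,\cdot]_p$ permits direct verification. Worse, the promotion step $T=S\oplus\mu\,\mathrm{Id}$ is not merely technically incomplete but faces a structural failure: for a general norm there is no canonical complement $\mathbb{M}'$, and Birkhoff--James orthogonality does not respect direct-sum decompositions, so even granting $S$, there is no reason that $T$ preserves orthogonality at the $x_i$ in $\mathbb{X}$ --- the Case~2 computation in the paper's $\ell_p^n$ proof works only because $\|\cdot\|_p$ splits coordinatewise and the block $D=2^{1/q}I_{n-2}$ was tuned to that norm. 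Finally, as you note, the reduction via symmetry of Birkhoff--James orthogonality (James) says nothing for $n=2$, where Radon planes are symmetric yet non-Euclidean, and for non-smooth points the single-functional picture must be replaced by associated cones, neither of which your plan resolves. In short: the easy direction is fine, the hard direction remains exactly as open after your proposal as it is in the paper.
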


\end{document}